\DeclareMathOperator{\Ker}{Ker}
\numberwithin{equation}{section}
\numberwithin{figure}{section}
\newtheorem{theorem}{Theorem}[section]
\newtheorem{definition}[theorem]{Definition}
\newtheorem{example}[theorem]{Example}
\newtheorem{lemma}{Lemma}[section]
\newtheorem{remark}[theorem]{Remark}
\begin{document}
	
	\title[test]{Towards nonlinearity. The $p$-regularity theory. Applications and developments. }
	\keywords{...}
	
	\author{
		E. Bednarczuk$^{1}$ 
	}
	
	\author{
		O. Brezhneva$^{2}$ 
	}

	\author{
		K. Le\'{s}niewski$^{3}$ 
	}
	\author{
		A. Prusi\'{n}ska$^{4}$ 
	}
	\author{
		A. Tret'yakov$^{4,5}$ 
	}
	\thanks{$^1$ Warsaw University of Technology, Poland
  ;ewa.bednarczuk@pw.edu.pl
	}
	\thanks{$^2$ Department of Mathematics, Miami University, Oxford, Ohio, USA; brezhnoa@miamioh.edu}
	\thanks{$^3$ System Research Institute, Polish Academy of Sciences, Poland;krzysztof.lesniewski@ibspan.waw.pl}
	\thanks{$^4$  Faculty of Science, University of Siedlce, Poland; aprus@uws.edu.pl}
	\thanks{$^5$  Dorodnicyn Computing Center, Federal Research Center ``Computer Science and Control," Russian Academy of Sciences, Moscow, 119333 Russia; tret@uws.edu.pl\\
	The work of the fourth author was supported also by the Russian Foundation for Basic Research (project No.~17-07-00510) and the RAS Presidium Program (program 27).}

	\maketitle
	
\smallskip

\begin{flushright}
    To the memory of our friends and colleagues \\
    Asen Dontchev (1948-2021) and Jerrold E. Marsden (1942-2010)\\
\end{flushright}
	\begin{abstract}
	
We present recent advances in the analysis of nonlinear equations with singular operators and nonlinear optimization problems with constraints given by singular mappings. The results are obtained within the framework of $p$-regularity theory, which has developed successfully over the last forty years. We illustrate the theory with its applications to degenerate problems in various areas of mathematics. In particular, we address the problem of describing the tangent cone to the solution set of nonlinear equations in a singular case. The structure of $p$-factor operators is used to propose optimality conditions and construct numerical methods for solving degenerate nonlinear equations and optimization problems.

The methods presented in the paper can be considered as the first numerical approaches targeting solutions of degenerate problems, such as the Van der Pol differential equation, boundary-value problems with a small parameter, partial differential equations where Poincaré's method of small parameter fails, nonlinear degenerate dynamical systems, and others.
There are various practical applications for the theory of $p$-regularity, including structural engineering, composite materials, and material design. For instance, the theory can be applied to analyze the behavior of materials with irregular or complex properties. By considering higher-order derivatives, it becomes possible to model and predict the response of materials to external forces, such as stress or temperature variations.
In geophysics, the $p$-regularity theory can be utilized to analyze and interpret complex data obtained from seismic surveys, gravity measurements, or electromagnetic surveys. The theory also finds applications in the analysis of nonlinear differential equations arising in control systems, geometric and topological analysis, biomechanics, and many other fields.
The theory offers a valuable approach to analyzing and understanding nonlinear phenomena in situations where classical regularity assumptions fail. This allows for a more comprehensive and nuanced understanding of complex systems. By incorporating higher-order derivatives and adapting the analysis to handle irregularities, $p$-regularity techniques provide valuable tools for understanding and characterizing complex phenomena in these settings.
	\end{abstract}
	{\bf Keywords:} nonlinear optimization, operator equation, tangent cone, singularity, $p$-regularity
%\end{keywords}

% REQUIRED
%\begin{AMS}
  {\bf AMS: } 47J05, 49M15, 65D05, 34L30, 49K27, 65K10, 90C30
%\end{AMS}

\section{Introduction}
\label{intro}
Many fundamental results in nonlinear analysis and classical numerical methods in Banach spaces $X$ and $Y$ rely on the regularity of a mapping $F: X \to Y$ at some point $\bar{x} \in X$. The regularity of a Fr'echet differentiable mapping $F$ is commonly understood as the surjectivity of its Fr'echet derivative $F'$. However, a growing number of applications in areas such as partial differential equations, control theory, and optimization require the development of special approaches to deal with nonregular problems.

We introduce the theory of $p$-regularity, which originated in the 1980s with the aim of providing constructive tools for the analysis of nonregular problems. To date, the theory of $p$-regularity has found successful applications in various contexts and different areas of mathematics, as discussed in numerous papers. This paper highlights the most distinguished applications of the theory of $p$-regularity, with the goal of reviewing important results and indicating potential and promising directions for its future development and applications.

The theory of $p$-regularity, also known as higher-order regularity theory, offers a framework for studying nonlinear problems in situations where regularity assumptions are not satisfied. It focuses on utilizing higher-order derivatives to analyze and understand the behavior of mappings that are not onto or lack regularity.
	 
	 \begin{definition}
	[c.f. Definition 1.16 of \cite{ioffe}]
	\label{regular_point}
 Let $F:X\rightarrow Y$ be a continuously differentiable mapping from an open set $U\subset X$  of a Banach space $X$ into a Banach space $Y$. A vector $x\in U$ is called a {\em regular} point of $F$ if
$F'(x)$ maps $X$ onto the entire space $Y$, expressed as $\text{Im }F'(x)=Y.$ If $\text{Im }F'(x)\neq Y$, we refer to $x$ as a singular (nonregular, irregular, degenerate) point of $F$.	
\end{definition}

\subsection{Recollection of the fundamental results in the regular case}

	Regularity is a common assumption in many fundamental results of real and functional analysis, such as the inverse function theorem and the implicit function theorem. 
 In this section, we revisit some of these results. The theorems presented in this section have their roots in the following classical result.
	
	\begin{theorem}[Banach Open Mapping Principle,  \cite{Banach1932}, see also \cite{MR4393590}]
 \label{banach}
		Let $X$ and $Y$ be Banach spaces. For any linear
	and bounded single-valued mapping $A:X\rightarrow Y$, the following properties are
	equivalent:
	\begin{description}
	\item[(a)] $A$ is surjective.
	\item[(b)] $A$ is open.
	%\item[(c)] 0 ∈ int A(int IB);$0\in\text{int\,}A(\text{int\,})$
	\item[(c)] There is a constant $\kappa>0$ such that for all $y\in Y$ there exists $x\in X$ with $Ax=y$ and
	$\|x\|\le\kappa\|y\|$.
\end{description}
\end{theorem}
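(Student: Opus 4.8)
The plan is to prove the Banach Open Mapping Principle by establishing the cycle of implications $(a)\Rightarrow(c)\Rightarrow(b)\Rightarrow(a)$, which gives the full equivalence. The analytic heart of the theorem lies in $(a)\Rightarrow(c)$, the quantitative solvability estimate; once this is in hand, the remaining implications are essentially formal.

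First I would prove $(a)\Rightarrow(c)$, which is the genuine content and the main obstacle. Assuming $A$ is surjective, write $Y=\bigcup_{n=1}^{\infty} A(nB_X)$, where $B_X$ is the closed unit ball of $X$. Since $Y$ is a complete metric space, the Baire Category Theorem guarantees that some $\overline{A(nB_X)}$ has nonempty interior, and by homogeneity and convexity one deduces that $\overline{A(B_X)}$ contains a ball $rB_Y$ about the origin for some $r>0$. The delicate step — and the place where completeness of $X$ is indispensable — is upgrading this containment for the \emph{closure} into a containment $A(B_X)\supset \tfrac{r}{2}B_Y$ for the image itself. This is carried out by a successive-approximation argument: given $y\in\tfrac{r}{2}B_Y$, one builds a sequence $x_k\in X$ with geometrically decreasing norms so that $A(x_1+\cdots+x_k)$ converges to $y$; completeness of $X$ then ensures $\sum x_k$ converges to some $x$ with $Ax=y$ and $\|x\|$ controlled by $\|y\|$, yielding the constant $\kappa$.

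Next I would dispatch $(c)\Rightarrow(b)$. Openness means $A$ carries open sets to open sets; it suffices to show that the image of any ball around a point contains a ball around its image, and the uniform solvability estimate in $(c)$ gives exactly this, since for $y$ near $Ax_0$ one can solve $Az=y-Ax_0$ with $\|z\|\le\kappa\|y-Ax_0\|$, so $x_0+z$ is a preimage lying in the prescribed ball. Finally, $(b)\Rightarrow(a)$ is immediate: an open map sends the open unit ball to an open set containing $0$, hence containing some ball $\varepsilon B_Y$; scaling shows every point of $Y$ is attained, so $A$ is surjective.

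The main obstacle, as noted, is the closure-removal step inside $(a)\Rightarrow(c)$: the Baire Category argument only yields information about $\overline{A(B_X)}$, and passing to the actual image requires the iterative construction together with the completeness of $X$. I would present that successive-approximation lemma carefully, tracking the geometric factor so that the final norm bound is explicit, and then assemble the three implications. Alternatively, one could simply invoke completeness-based fixed-point or series-summation machinery, but the Baire-plus-iteration route is the most self-contained and is the standard proof I would reproduce here.
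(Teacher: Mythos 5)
The paper states this theorem without proof: it is quoted as a classical result, with references to Banach's 1932 monograph and to \cite{MR4393590}, so there is no internal argument to compare against. Your proposal is the standard and correct proof --- Baire category applied to $Y=\bigcup_n \overline{A(nB_X)}$, symmetry and convexity to center the ball at the origin, the successive-approximation (series-summation) argument using completeness of $X$ to pass from $\overline{A(B_X)}\supset rB_Y$ to $A(B_X)\supset \tfrac{r}{2}B_Y$, and then the formal implications $(c)\Rightarrow(b)\Rightarrow(a)$. You correctly identify the closure-removal step as the crux and correctly apportion the roles of completeness of $Y$ (Baire) and of $X$ (summing the series), so the outline would compile into a complete proof with no gaps beyond routine bookkeeping of the geometric constants.
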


The Banach open mapping principle can be extended to nonlinear mappings in many ways. One of those results is stated in the next theorem.

\begin{theorem} [Graves theorem, \cite{MR35398}] 
\label{th:graves}Let $X$ and $Y$  be Banach spaces and let $f$ be a continuous
function from $X$ to $Y$ defined in $B(0, \varepsilon)$ for some $\varepsilon>0$ with $f(0)=0$, where $B(x,t)$ stands for an open ball with radius $t$ and the center at $x$. Let $A$ be a continuous and linear operator from $X$ onto $Y$ and let $\kappa>0$ be the corresponding constant from Theorem \ref{banach} part (c). Suppose that there exists a constant $\delta<\kappa^{-1}$ such that
\begin{equation} \label{eq:graves}
\| f (x_{1})-f (x_{2})-A(x_{1}-x_{2})\|\le\delta\|x_{1}-x_{2}\|
\end{equation}
whenever $x_{1},x_{2}\in B(0, \varepsilon)$. Then the equation $y=f(x)$ has a solution $x\in B(0,\varepsilon)$ whenever $\|y\|\le c$, where $c=\kappa^{-1}-\delta$.
\end{theorem}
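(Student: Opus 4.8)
The plan is to solve $y=f(x)$ by a successive-approximation scheme that repeatedly invokes the bounded right inverse supplied by Theorem~\ref{banach}(c). Fix $y$ with $\|y\|\le c$ and set $x_0=0$, so that the initial residual is $r_0:=y-f(x_0)=y$. Given $x_n$ with residual $r_n:=y-f(x_n)$, the surjectivity of $A$ together with part (c) of the Banach principle lets me select a correction $u_n\in X$ with $Au_n=r_n$ and $\|u_n\|\le\kappa\|r_n\|$; I then put $x_{n+1}:=x_n+u_n$. This is a modified Newton iteration in which the exact (and here unavailable) inverse of the derivative is replaced by a norm-controlled selection from the solution set of $Au=r_n$. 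It is worth working with an explicit iteration rather than a single contraction map, precisely because the selection $r\mapsto u$ need not be linear.

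The crux is the geometric decay of the residuals. Since $Au_n=r_n=A(x_{n+1}-x_n)$, the definition of $r_{n+1}$ gives the identity $r_{n+1}=-\bigl(f(x_{n+1})-f(x_n)-A(x_{n+1}-x_n)\bigr)$, so hypothesis \eqref{eq:graves} yields $\|r_{n+1}\|\le\delta\|x_{n+1}-x_n\|=\delta\|u_n\|\le\kappa\delta\,\|r_n\|$. Writing $\theta:=\kappa\delta<1$ (which holds precisely because $\delta<\kappa^{-1}$), an induction gives $\|r_n\|\le\theta^n\|y\|$ and hence $\|u_n\|\le\kappa\theta^n\|y\|$. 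Consequently $\sum_n\|u_n\|\le\kappa\|y\|/(1-\theta)$ is finite, so $\{x_n\}$ is Cauchy and converges to some $x^\ast\in X$; moreover each partial sum obeys $\|x_n\|\le\kappa\|y\|/(1-\kappa\delta)$, and the threshold $c=\kappa^{-1}-\delta$ is exactly the quantity that controls this total displacement and thereby confines every iterate to the ball on which \eqref{eq:graves} is available.

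Finally I pass to the limit: $\|r_n\|\le\theta^n\|y\|\to0$ means $f(x_n)\to y$, while continuity of $f$ gives $f(x_n)\to f(x^\ast)$, whence $f(x^\ast)=y$ with $x^\ast\in B(0,\varepsilon)$. The main obstacle is the bookkeeping that guarantees each $x_n$ remains in $B(0,\varepsilon)$, since the Lipschitz-type estimate \eqref{eq:graves} is only assumed to hold there; this membership must be verified inductively together with the residual bound, and it is exactly what forces the quantitative restriction $\|y\|\le c$. A secondary point to treat carefully is the existence of the norm-bounded corrections $u_n$ at every step, which rests on the surjectivity of $A$ through its equivalence with Theorem~\ref{banach}(c) rather than on any explicitly constructed inverse.
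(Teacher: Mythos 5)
Your iteration is the classical Graves successive-approximation scheme, and it is essentially the same argument as in the reference \cite{dontchev1996graves} to which the paper defers for the proof (the paper itself gives no proof of this theorem). The residual analysis is correct: from $Au_n=r_n$ one gets $r_{n+1}=-\bigl(f(x_{n+1})-f(x_n)-A(x_{n+1}-x_n)\bigr)$, hence $\|r_{n+1}\|\le\kappa\delta\|r_n\|$ and geometric decay. The genuine gap is the confinement step, which you assert rather than verify. Your own displacement bound gives
\[
\|x_n\|\le\sum_{k<n}\|u_k\|\le\frac{\kappa\|y\|}{1-\kappa\delta}=\frac{\|y\|}{\kappa^{-1}-\delta}=\frac{\|y\|}{c},
\]
so the hypothesis $\|y\|\le c$ yields only $\|x_n\|\le 1$. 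Nothing in this estimate involves $\varepsilon$: unless $\varepsilon>1$, you cannot conclude that the iterates stay in $B(0,\varepsilon)$, and then neither \eqref{eq:graves} nor even the evaluation of $f$ at $x_{n+1}$ is available, so the induction does not close. The sentence claiming that $c=\kappa^{-1}-\delta$ ``is exactly the quantity that \dots confines every iterate to the ball'' is precisely where the proof breaks.

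This gap cannot be repaired, because the statement as printed is false: take $X=Y=\mathbb{R}$, $A=I$, $f=A$ restricted to $B(0,\varepsilon)$ with $\varepsilon=1/2$. Then $\kappa=1$, condition \eqref{eq:graves} holds for every $\delta\in(0,1)$, and $c=1-\delta$ can be taken arbitrarily close to $1$; yet $y=3/4\le c$ has no preimage in $B(0,1/2)$. The correct conclusion --- as in Graves' original theorem and in \cite{dontchev1996graves} --- is that $y=f(x)$ is solvable in $B(0,\varepsilon)$ whenever $\|y\|\le c\,\varepsilon$: the threshold must scale with the radius of the ball. Under that hypothesis your own bound becomes $\|x_n\|<\|y\|/c\le\varepsilon$, the induction closes, and the limit-plus-continuity argument finishes the proof exactly as you wrote it (modulo routine care to keep the limit point inside the open ball, e.g.\ by working with the closed ball or a strict inequality on $\|y\|$). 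In short: right method, correct decay estimate, but the quantitative bookkeeping you yourself flagged as ``the main obstacle'' is exactly the step that fails for the theorem as stated here, and it fails because the statement is missing the factor $\varepsilon$.
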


Note that the assumption of differentiability of $f$ at $0$ is not made, owing to the introduction of the concept of a strictly differentiable function several years after the publication of Graves' work \cite{MR35398}. Instead, the surjectivity of the operator $A$ is used. The proof of Theorem \ref{th:graves}, along with its reformulation in terms of a strictly differentiable function $f$, and related discussions can be found in \cite{dontchev1996graves}. For historical remarks, refer to \cite{Dontchev2019}.

%\begin{theorem} [Implicit Function Theorem]  Let $X$, $Y$ and $Z$ be Banach spaces and
%let $\Omega$ be an open subset of $X\times Y$. Let $F:\Omega\rightarrow Z$ be a continuously differentiable map.  If $(\bar{x},\bar{y})\in\Omega$ is a point such that $F'_{y}(\bar{x},\bar{y})$ is a bounded, invertible, linear
%map from $Y$ to $Z$, then there is an open neighborhood $V$ of $\bar{x}$, and a unique continuously differentiable function
%$f:V\rightarrow Y$ such that
%$$
%F(x,f(x))=F(\bar{x},\bar{y})\ \ \forall\ x\in V.
%$$
%Moreover, $f$ is continuously differentiable, and
%$f'(x)=-[F'_{y}(x,f(x))]^{-1}F_{x}(x,f(x))$.	
%\end{theorem}

Both the inverse function theorem and the implicit function theorem can be deduced from  Theorem \ref{Lyusternik-Graves} below; for details, see Theorem 1.20 in Section 1.2 of \cite{ioffe}.  We should also mention that some variants of the inverse function theorem can be immediately deduced from the implicit function theorem, see  Section \ref{sec:IFT} for details.

To state Theorem \ref{Lyusternik-Graves}, let us recall Definition 1.6 from \cite{ioffe}.

The {\em Banach constant} of a bounded linear operator $A$ between Banach spaces $X$ and $Y$, denoted by C(A), is defined as follows
\begin{equation} 
\label{constant} 
C(A)=\sup\{r\ge 0 \mid r {B}_{Y}\subset A({B}_{X})\}=\inf\{\|y\| \mid  y\not\in A({B}_{X})\},
\end{equation}
where ${B}_{X}$ and  ${B}_{Y}$ represent the unit balls in spaces $X$ and $Y$, respectively.

%Also recall the notation $B(x,t)$ for a ball of radius $t$ and a center at $x$.

\begin{theorem}[Lyusternik–Graves Theorem,  \cite{ioffe}]
	\label{Lyusternik-Graves}
	%(Lyusternik–Graves theorem) 
	Let $X$ and $Y$ be Banach spaces. Suppose that $F:X\rightarrow Y$ is %continuously Fr\'echet differentiable 
	strictly differentiable and regular at $\bar{x}\in X$. %, i.e. the  derivative $F'(\bar{x})$ is surjective. 
	Then for any positive $r<C(F'(\bar{x}))$, there exists an $\varepsilon>0$ such that
	$$
	 {B}(F(x),rt)\subset F(B(x,t)),
	$$
	whenever $\|x-\bar{x}\|<\varepsilon$ and $0\leq t<\varepsilon$.
\end{theorem}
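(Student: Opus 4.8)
The plan is to localize the problem around $\bar x$, freeze the linear part at $A := F'(\bar x)$, and reduce the desired covering property to a single application of Graves' theorem (Theorem \ref{th:graves}) on the unit ball after a linear rescaling. Since $F$ is regular at $\bar x$, the operator $A$ is surjective, so Theorem \ref{banach} applies to $A$. Given $r < C(A)$, I would first fix a constant $\kappa$ with $r < \kappa^{-1} < C(A)$: because $1/C(A)$ is the infimum of the admissible constants in Theorem \ref{banach}(c), any such $\kappa$ is a valid open-mapping constant for $A$. Setting $\delta := \kappa^{-1} - r$ gives $0 < \delta < \kappa^{-1}$, and crucially $\kappa^{-1} - \delta = r$, which is precisely the constant that will reappear in the conclusion of Graves' theorem.

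Next I would invoke strict differentiability to produce the neighborhood and the Graves estimate. By definition of strict differentiability at $\bar x$, applied with tolerance $\delta$, there exists $\varepsilon_0 > 0$ (with $B(\bar x, \varepsilon_0) \subset U$) such that
$$
\|F(x_1) - F(x_2) - A(x_1 - x_2)\| \le \delta \, \|x_1 - x_2\|
\qquad \text{for all } x_1, x_2 \in B(\bar x, \varepsilon_0).
$$
I would then set $\varepsilon := \varepsilon_0/2$, so that whenever $\|x - \bar x\| < \varepsilon$ and $0 \le t < \varepsilon$, the ball $B(x,t)$ lies inside $B(\bar x, \varepsilon_0)$ and the estimate above is available throughout $B(x,t)$.

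The core step is to fix such an $x$ and $t$ (the case $t = 0$ being vacuous, as both balls are empty) and to realize the covering $B(F(x), rt) \subset F(B(x,t))$ via Graves' theorem. Rather than apply it directly on $B(0,t)$ — where one must track how the solvability radius scales with $t$ — I would rescale to the unit ball by defining
$$
g(v) := \frac{1}{t}\bigl(F(x + t v) - F(x)\bigr), \qquad v \in B(0,1).
$$
Then $g$ is continuous, $g(0) = 0$, and a change of variables combined with the estimate above shows that $g$ satisfies the Graves condition \eqref{eq:graves} with the same operator $A$ and the same constant $\delta$ on $B(0,1)$. Applying Theorem \ref{th:graves} to $g$ with radius $1$, the equation $g(v) = w'$ has a solution $v \in B(0,1)$ whenever $\|w'\| \le \kappa^{-1} - \delta = r$. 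Given any $y \in B(F(x), rt)$, I set $w' := (y - F(x))/t$, which has norm $< r$; the resulting $v$ then yields $u := t v \in B(0,t)$ with $F(x + u) = F(x) + t w' = y$, so $y \in F(B(x,t))$, as required.

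The main obstacle I anticipate is organizing the constants and the uniformity correctly rather than any single hard estimate. The whole point is that strict differentiability, as opposed to mere Fréchet differentiability, delivers the Graves inequality uniformly on a fixed neighborhood $B(\bar x, \varepsilon_0)$, and this is exactly what allows a single $\varepsilon$ to work simultaneously for every base point $x$ near $\bar x$ and every small radius $t$. The rescaling $g$ is the device that converts the $t$-dependent covering radius $rt$ into the fixed Graves constant $r$ on the unit ball, and checking that this rescaling preserves both the Graves estimate (with an unchanged $\delta$) and the continuity and normalization hypotheses is the step that must be carried out with care.
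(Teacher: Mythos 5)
Your proof is correct. Note, however, that the paper itself gives no proof of Theorem \ref{Lyusternik-Graves}: the theorem is quoted from \cite{ioffe}, so there is no internal argument to compare against; what you give is a self-contained derivation from the two results the paper does state, Theorem \ref{banach} and Theorem \ref{th:graves}, which is the classical route. Two points in your write-up deserve emphasis. First, your choice of $\kappa$ with $r<\kappa^{-1}<C(F'(\bar{x}))$ is exactly what is needed to make $\kappa$ an admissible constant in Theorem \ref{banach}(c); the verification (pick $\rho$ with $\kappa^{-1}<\rho$ and $\rho B_Y\subset A(B_X)$, then rescale) goes through whether the unit balls in \eqref{constant} are taken open or closed, precisely because your inequality is strict. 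Second, and more importantly, your rescaling to the unit ball is not merely a convenience: the statement of Theorem \ref{th:graves} as printed in the paper asserts solvability of $y=f(x)$ in $B(0,\varepsilon)$ for all $\|y\|\le c$ with $c=\kappa^{-1}-\delta$, whereas the correct conclusion (see \cite{dontchev1996graves}) is solvability for $\|y\|\le c\,\varepsilon$; the two agree only when $\varepsilon=1$. Had you applied the printed statement directly to $v\mapsto F(x+v)-F(x)$ on $B(0,t)$, you would have ``proved'' a covering radius independent of $t$, which is false; by passing to $g(v)=t^{-1}\bigl(F(x+tv)-F(x)\bigr)$ on $B(0,1)$ you invoke Graves' theorem only in the case where the printed and the correct statements coincide, and the factor $t$ then reappears correctly in the conclusion $B(F(x),rt)\subset F(B(x,t))$. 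The one assertion you leave unjustified --- continuity of $g$ --- is immediate, since the strict-differentiability estimate makes $F$ Lipschitz with constant $\|A\|+\delta$ on $B(\bar{x},\varepsilon_0)$.
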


	For a thorough analysis of numerous  consequences of Theorem \ref{Lyusternik-Graves}  we refer the reader to the review paper by Dmitruk, Milyutin, and Osmolovskii \cite{DMO1980}, where the theorem is called the ``generalized Lyusternik theorem." Also in the monographs by Dontchev \cite{MR4393590}, Ioffe \cite{ioffe}, and Dontchev and Rockafellar \cite{MR3288139}, Theorem \ref{Lyusternik-Graves}  is called the ``Lyusternik-Graves theorem." From a more general point of view, the theorem is treated by  Dontchev and Frankowska in \cite{MR2736335, MR3059047}.

One of the consequences of Theorem \ref{Lyusternik-Graves} is the description of tangent vectors to the level set of a continuously differentiable mapping $F$ at regular points  (see  Section \ref{Sec:LT} below).

%\begin{theorem} [Lyusternik theorem] Let $X$ and $Y$ be Banach spaces. Suppose that $F:X\rightarrow Y$ is continuously Fr\'echet differentiable with the surjective derivative $F'(\bar{x})$ at $\bar{x}$. Then
%	the collection of vectors tangent to the level set $M:\{x \in X\ | \ F(x) =
%	F(\bar{x})\}$ at $\bar{x}$ is a subspace coinciding with $\text{ker }F'(\bar{x})$.
%\end{theorem}.	
\subsection{Generalizations}	
	Since the early 1970s, due to theoretical interests and an increasing number of involved economic and industrial applications,  a vast literature was devoted to relaxing the surjectivity assumption of the derivative in the fundamental results, some of which are given above, while maintaining as much of their conclusions as possible. It is beyond the scope of this paper to provide an exhaustive survey of the existing generalizations of the theorems stated above. For the purpose of the paper, we can distinguish generalizations exploiting higher-order derivatives (see e.g. Frankowska  \cite{MR1204019, MR1019118}) and generalizations which attempt to relax the surjectivity assumption of the derivative without referring to higher-order derivatives  (see e.g. Ekeland \cite{MR2765512}, Hamilton \cite{Hamilton}, Bednarczuk, Le\'sniewski, and Rutkowski \cite{MR4223867}). The theory of $p$-regularity belongs to the first group of generalizations where higher-order derivatives are involved. 
	
	In this manuscript, we present the main concepts and results of the $p$-regularity theory, which has been developing successfully for the last forty years.  
	One of the main goals of the theory of $p$-regularity is to replace the operator of the first derivative, which is not surjective, by a special mapping that is onto.
	Nonlinear mappings analyzed within the framework of the theory of $p$-regularity are those for which the derivatives up to the order $p-1$ are not surjective at a given point $\bar{x}$,  where $p$ is a number ($p \geq 2$).
The main concept of the $p$-regularity theory is the construction of the $p$-factor-operator, which is surjective at the point $\bar{x}$ (see Definition \ref{4def1}). The special definition and the property of surjectivity of the  $p$-factor operator lead to generalizations of the fundamental results of analysis, including the implicit function theorem, and some classical numerical methods.  The $p$-factor operator  is defined in such a constructive way that it efficiently replaces the nonsurjective first derivative in a variety of situations. 
The structure of the $p$-factor-operator is used as a basis for analyzing nonregular problems and for constructing numerical methods for solving degenerate nonlinear equations and optimization problems.
We discuss those generalizations in this paper.

	 There are many publications that focus on the case of $p=2$ and use a 2-factor operator in a variety of applications.  
In this work, we consider a more general case of $p \geq 2$ and do not make some additional assumptions introduced and required in the publications of other authors.

In the framework of metric spaces, the concept that is related to the problems discussed in the present paper and attempting to generalize the classical results given above is the concept of metric regularity, see, for example,  \cite{MR4393590, ioffe}. 
For a function $f$ acting between Banach spaces $X$ and $Y$ and being strictly differentiable at the point $\bar{x}$, Corollary 5.3 in \cite{MR4393590} complements Theorem 5.1 in \cite{MR4393590}. 
It concludes that metric regularity at $\bar{x}$ for $f(\bar{x})$ is equivalent to surjectivity of the Fr\'echet derivative $Df(\bar{x})$ of $f$ at $\bar{x}$. 
In the case when $X= Y = \mathbb{R}^n$ this is the same as the nonsingularity of the Jacobian matrix $\nabla f(\bar{x})$.

%Let $f: \mathbb{R}^n \rightarrow \mathbb{R}^m$, $m\le n$ be differentiable on a g of $\bar{x}$. By Theorem 7.3 of \cite{MR4393590}, metric regularity of $f$ at $\bar{x}$ is equivalent to the surjectivity of  Jacobian of $f$ at $\bar{x}.$

The theory of $p$-regularity and the apparatus of $p$-factor operators makes it possible to create new methods of computational mathematics to solve such nonlinear problems of mathematical physics as the Van der Pol differential equation, boundary-value problems with a small parameter, partial differential equations where Poincaré's method of small parameter fails, nonlinear degenerate dynamical systems, and others.
This is associated with the proposed fundamentally new design (after Newton) of a numerical method for solving essentially (degenerate) problems, which is described in this paper. Moreover, the proposed approach will allow us to construct a new type of difference schemes of computational mathematics for solving problems of nonlinear mathematical physics, which are stable and  converge to a solution quickly.

This pertains to the numerical solution of nonlinear equations such as the nonlinear heat equation, Burgers equation, Korteweg-de Vries equation, Navier-Stokes equation, etc. It also enables us to reorganize Numerical Analysis in a novel way, with solutions obtained being related to the mentioned problems. All of this is applicable to the emerging prospects for developing new technologies and designs in Computational Mathematics for solving problems and models related to Artificial Intelligence, Optimization problems, dynamical systems, optimal control problems, etc. New opportunities are emerging for modeling and researching neural networks and creating new architectures for supercomputing.

It is crucial to emphasize the creation of fundamentally new and pioneering methods in computational mathematics. The resulting schemes were far from any previous designs, emerging from years of research into the structure of degeneracy  --- specifically, the structure of degenerate mappings and sets of solutions to degenerate systems. The analysis of these structures significantly differed from the analysis of linear problems, presenting entirely new forms of research objects.

Simultaneously, the previously hidden nonlinear world of unknown objects revealed an unexpectedly rich diversity. The class of possible new methods is explosively rich, featuring a rigid structure that is independent of small parameters, similar to regularization. These methods allow for adjustment depending on the problem being solved, as seen in the $p$-factor-operator, which depends on $h$: 
$$F^{(p)}(x^*)[h]^{p-1}.$$

Moreover, recent studies \cite{EBPT} revealed that the so-called ill-posed problems and essentially nonlinear ones are locally equivalent.
Therefore, many important problems such as inverse problems and others can be solved by using the p-factor method or the p-factor regularization. This is a new direction in mathematics and practical applications.

\subsection{Applications of the p-regularity}
The applications of the $p$-regularity theory extend across various fields, addressing nonregular or degenerate problems. In this section, we highlight a few examples.

The p-regularity theory plays a pivotal role in nonlinear analysis and critical point theory, especially in the study of nonregular or degenerate critical points of smooth functions. It provides tools for characterizing the behavior of critical points, understanding their indices, and establishing qualitative properties of critical point sets.

In the field of nonlinear optimization, mathematical programming, and variational analysis, the p-regularity theory can be utilized to tackle nonregular or degenerate objective functions or constraints. By incorporating higher-order derivatives and considering irregularities, the p-regularity techniques aid in the analysis of critical points, convergence properties of optimization algorithms, and the existence of solutions in constrained optimization problems. This results in improved algorithms, convergence analysis, and solution methods for optimization problems in various fields, including engineering, economics, and logistics. 

The application of the $p$-regularity theory extends to the study and analysis of nonlinear degenerate partial differential equations (PDEs). By considering higher-order derivatives and exploiting the nonregularity of the problem, the constructions of p-regularity help understand the behavior of solutions, establish existence results, and investigate qualitative properties such as stability, bifurcations, and concentration phenomena.

%%%%nhERE

The p-regularity theory also finds applications in the analysis of nonlinear differential equations arising in control systems. By considering higher-order derivatives, it contributes to understanding controllability, stability, and solution behavior in nonregular or degenerate control systems, offering insights into the design and analysis of control strategies.

Moreover, the p-regularity theory has applications in geometric and topological analysis, particularly in studying nonregular or degenerate geometric objects. It can be used to analyze critical points, critical sets, and singularities of maps or geometric structures. By considering higher-order derivatives and the irregularity of the objects involved, the p-regularity techniques provide insights into their geometry, topology, and qualitative properties.

The p-regularity theory can be applied to analyze control systems or differential inclusions with nonregular or degenerate dynamics. By considering higher-order derivatives and the irregularities in the system, p-regularity techniques can help in understanding stability properties, controllability, and the existence of solutions in control problems with nonstandard regularity assumptions.

The p-regularity theory can be employed in the analysis of singular perturbation problems, where the regularity assumptions may not hold. By considering higher-order derivatives, p-regularity techniques can help in understanding the behavior of solutions near singular points or boundary layers, providing insights into the dynamics and asymptotic behavior of the system.

The p-regularity theory has applications in the study of phase transitions and critical phenomena in various physical and mathematical models. By incorporating higher-order derivatives, it can provide a more accurate description of the behavior near critical points, allowing for a better understanding of phase transitions and critical phenomena, such as in statistical mechanics or mathematical physics.

In the field of solid mechanics, the p-regularity theory can be used to analyze the behavior of nonlinear elastic and plastic materials. By considering higher-order derivatives, it can help in characterizing the stress-strain relations, determining the critical points, and understanding the onset of plastic deformation or material failure in nonregular or degenerate materials.

The p-regularity theory can be applied to nonlinear eigenvalue problems, where the regularity assumptions may not be satisfied. By incorporating higher-order derivatives, it can assist in studying the existence, multiplicity, and properties of solutions to nonlinear eigenvalue problems, offering insights into the behavior of eigenfunctions and eigenvalues in nonregular or degenerate settings.

In the field of biomechanics, the p-regularity theory can be utilized to analyze and model human movement. By considering higher-order derivatives, it becomes possible to account for irregularities, jerky motions, or non-smooth behaviors that may arise in complex human motions. This allows for a more accurate representation and analysis of movements in fields such as sports science, rehabilitation, and ergonomics.

Moreover, the theory can be applied to analyze the behavior of materials with irregular or complex properties. By considering higher-order derivatives, it becomes possible to model and predict the response of materials to external forces, such as stress or temperature variations. This finds applications in structural engineering, composite materials, and material design. In geophysics, the p-regularity theory can be used to analyze and interpret complex data obtained from seismic surveys, gravity measurements, or electromagnetic surveys. By incorporating higher-order derivatives, it becomes possible to characterize subsurface properties, detect anomalies, and improve the accuracy of geological models used in resource exploration or environmental assessment.

Furthermore, the $p$-regularity theory can be employed in image and signal denoising applications. By considering higher-order derivatives, it becomes possible to exploit the intrinsic smoothness or regularity of signals or images to remove noise or unwanted artifacts. This has applications in areas like medical imaging, image restoration, and signal processing.

In financial risk management, the theory of $p$-regularity can be  applied to assess and model the behavior of complex financial instruments or portfolios. By considering higher-order derivatives, it becomes possible to capture and quantify risks associated with nonregular or irregular market conditions. This is relevant in areas like option pricing, risk hedging, and portfolio optimization.

In robotics and motion planning, the $p$-regularity theory can  be used to analyze and plan the motion of robotic systems in complex environments. By incorporating higher-order derivatives, it becomes possible to model and optimize the robot's movements, ensuring stability and avoiding irregularities or singular configurations. This has applications in areas like robot control, autonomous navigation, and motion planning.

These examples illustrate the versatility and broad applicability of $p$-regularity theory in various fields of mathematics and applied sciences, where nonregular or degenerate problems arise.  The theory offers a valuable approach to analyzing and understanding nonlinear phenomena in situations where classical regularity assumptions fail, allowing for a more comprehensive and nuanced understanding of complex systems.
By incorporating higher-order derivatives and adapting the analysis to handle irregularities, $p$-regularity techniques provide valuable tools for understanding and characterizing complex phenomena in these settings.

\subsection{Aims and scope}
The main focus of this work is on analysis and solving nonlinear equations of the form
\begin{equation}\label{F(x)=0}
  F(x)=0,
\end{equation}
and optimization problems of the form
\begin{equation}\label{phi-min}
\min f(x) \quad \hbox{subject to } \; F(x)=0,
\end{equation}
where $f: X\rightarrow \mathbb{R}$ and $F: X\rightarrow Y$ are sufficiently smooth mappings, and $X$ and $Y$ are Banach
spaces. Many interesting applied nonlinear problems can be written in one of these forms. 

Nonlinear mappings $F$ and problems of the form \eqref{F(x)=0} and \eqref{phi-min} can be divided into two classes, called regular (or nonsingular) and singular (or degenerate). The classification depends on the mapping~$F$, which is either regular (that is, $F'(\bar{x}$) is onto) or singular (that is if $F'(\bar{x}$) is not onto). Roughly speaking, regular mappings are those for which implicit function theorem arguments can be applied and singular problems are those for which they cannot, at least, not directly.

The purpose of this paper is to give an overview of methods and tools of the $p$-regularity theory and to show how they can be applied to analyze and develop methods for solving singular (irregular, degenerate) nonlinear equations and
 equality-constrained optimization problems.
 The development of the theory of $p$-regularity started approximately in 1983--1984 with the concept of $p$-regularity introduced by Tret'yakov in \cite{Tr83, Tr84}.

One of the main results of the theory of  $p$-regularity gives a detailed description of the structure of the zero set 
of a nonregular nonlinear mapping $F: X \to Y$. 
It is interesting to note that there have been several examples in the history of mathematics when fundamental results were obtained independently in the same general time period. One such example related to the theory of $p$-regularity  concerns theorems about the structure of the zero sets of an irregular mapping satisfying a special higher--order regularity condition. The result that we are referring to was simultaneously obtained by Buchner, Marsden and Schecter \cite{BMS} and Tretyakov \cite{Tr84}. Approaches proposed in \cite{BMS} and in \cite{Tr84} are the same.
 The difference is in motivation and the context for the main result in both papers. In \cite{BMS}, the structure of the zero set around a point where the derivative is not surjective was studied in the context of the bifurcation theory. 
Theorem 1.3 in \cite{BMS} is referred to as a {\em blowing-up result}. 
In Fink and Rheinboldt \cite{FR:87}, it was noted that Theorem 1.3 in
\cite{BMS} was a powerful generalization of Morse Lemma and some interesting counterexamples for a naive approach to the Morse Lemma were found.  The same theorem derived by Tretyakov \cite{Tr84} is one of the main results for the $p$--regularity theory. 
The result led to various theoretical developments and applications of the theory to nonregular (or degenerate) problems in many areas of mathematics.
We should note that the results and constructions introduced by Marsden and Tret'yakov are the same in the completely degenerate case.

The paper is organized as follows. We discuss essential nonlinearity and singular mappings in Section~\ref{sec:2}.
 Then we recall the main concepts and definitions of the $p$-regularity theory in Section~\ref{sec4}.
 We discuss some classical results of analysis and methods for solving nonlinear problems via the $p$-regularity theory
 in Section~\ref{sec:Examples}. 
In each subsection, we focus on singular problems that illustrate that the classical results are not necessarily satisfied in the nonregular case. We present generalizations of the same classical results, which were derived during the last forty years using the constructions and definitions of the $p$-regularity theory.

 In this manuscript, we consider a variety of applications. 
 We start Section~\ref{sec:Examples}  with Lyusternik theorem in Section~\ref{Sec:LT}.
Lyusternik theorem plays an important role in the description of the solution sets of nonlinear equations and feasible sets of optimization problems in the regular case.
 However, the classical Lyusternik Theorem might not hold if
 mapping $F$ is singular at some point $\bar{x}$. 
The first generalization of the classical Lyusternik theorem for
$p$-re\-gu\-lar mappings
 was derived and proved simultaneously in \cite{BMS} and \cite{Tr83}. It can be applied to describe the zero set of a $p$-regular mapping.
 Representation Theorem and Morse Lemma are also presented in Section~\ref{Sec:LT}. 
We continue with consideration of the 
Implicit Function Theorem in Section~\ref{sec:IFT}.
 There are numerous books and papers devoted to the classical implicit function theorem, amongst which are \cite{Hamilton, Krantz}.
 The classical Implicit Function Theorem is not applicable in the case when a mapping $F : X \times Y \to Z$ is not regular, that is when $F_y^\prime (\bar{x}, \bar{y})$ is not onto for some $(\bar{x}, \bar{y})$. 
We present a generalization of the implicit function theorem for nonregular mappings. 
In Section~\ref{sec:Newton}, we cover the $p$-factor Newton's method for solving nonlinear equation \eqref{F(x)=0} and finding critical points of an unconstrained optimization problem.
Optimality conditions for equality-constrained optimization problems and Lagrange multiplier theorems for the regular and degenerate cases are considered in Section~\ref{sub3.2}.
 The modified Lagrange function method for   2-regular problems is covered in Section~\ref{sub3.6}.
 Singular problems of the calculus of variations and optimality conditions for $p$-regular problems of the calculus of variations are considered in 
Section~\ref{sub3.5}.
 The existence of solutions to nonlinear equations in regular and degenerate cases is covered in 
Section~\ref{sec:Ex}.
 The second-order nonlinear ordinary differential equations with boundary conditions are presented in Section~\ref{sub3.9}.
 Newton interpolation polynomials and the $p$-factor interpolation method are considered in Section~\ref {sub4.11.1}.
  We make some concluding remarks in Section~\ref{sec:C}.

\paragraph{\bf General Notation}  Let
$\mathcal{L} (X,\, Y)$ be the space of all continuous linear ope\-ra\-tors
from $X$ to $Y$ and for a given linear operator $\Lambda
:X\rightarrow Y$,  let us denote its kernel and image by $\Ker \Lambda
=\{ x\in X\mid
\Lambda x=0\}$ and $\operatorname{Im} \Lambda =\{y\in Y\mid  y=\Lambda
x\mbox{ for some } x\in X\}$, respectively.
Also, $\Lambda ^*:Y^*\to X^*$ denotes the
adjoint of $\Lambda $, where $X^*$ and $Y^*$ denote the dual spaces of $X$
and $Y$, respectively.

Let $p$ be a natural number and let $B : X \times X \times \ldots
\times X \; \mbox{(with $p$ copies of $X$)} \;\to Y$ be a continuous
symmetric
$p$-multilinear mapping.  The $p$-form {\em associated to} $B$ is the map
$B[ \cdot ] ^p: X
\rightarrow Y$  defined by
$$
     B[ x ] ^p = B( x, x, \ldots, x),
$$
for $x \in X$. Alternatively,
we may simply view $B[ \cdot ] ^p$ as a homogeneous polynomial $Q: X
\rightarrow Y$ of degree~$p$,  i.e., $Q(\alpha x) = \alpha^p Q (x)$. The
space of continuous homogeneous polynomials $Q: X
\rightarrow Y$ of degree $p$ will be denoted by $\mathcal{Q}^p(X, \, Y)$.

If $F: X \rightarrow Y $ is of class $C^2$, its derivative $F^\prime$ at a point $x\in X $ is a linear continuous operator, $F ^\prime (x)\in\mathcal{L} (X,\, Y) $, i.e. $F':X\rightarrow \mathcal{L} (X,\, Y) $ and $F^{\prime \prime}:X\rightarrow \mathcal{L} (X,\, \mathcal{L} (X,\, Y))$. Hence, $F^{\prime \prime}(x)\in \mathcal{L} (X,\, \mathcal{L} (X,\, Y))$,  $F^{\prime \prime}(x)(h_{1})\in \mathcal{L} (X,\, Y)$, $F^{\prime \prime}(x)(h_{1}, h_{2})\in Y$, $h_{1},h_{2}\in X$, and the mapping $(h_{1},h_{2})\rightarrow F^{\prime \prime}(x)(h_{1}, h_{2})$ is a continuous symmetric bilinear mapping, see e.g.  \cite{MR0349288}, Chapter Viii.

If $F: X \to Y$ is of class $C^p$, we let $F^{(p)}(x)$ be the $p$th-order
derivative of
$F$ at the point $x$ (a symmetric multilinear map of $p$ copies of $X$ to
$Y$) and the associated
$p$-form, also called the {\em $p$th--order mapping}, is
$$
    F^{(p)}(x) [ h]^p = F^{(p)}(x) ( h, h, \ldots, h).
$$
Furthermore, we use the following key notation for the {\em $p$-kernel} of the $p$th-order mapping:
$$
     \Ker^{p} F\, ^{(p)} (x) = \{ h\in X \, |\, F\,
^{(p)}(x) \, [h]^p = 0 \, \} .
$$
This set is also called the {\em locus} of $F^{(p)}(x)$.

\section{Essential nonlinearity and singular mappings}
\label{sec:2}

Let a mapping $F : X \rightarrow Y$ belong to  $\mathcal{C}^1(W)$, where $W$ is a~neighborhood of some point $\bar{x} \in X$.
 According to \mbox{Definition \ref{regular_point}}, a mapping $F$ is called \textit{regular} at $\bar{x}$, if
\begin{equation}\label{2eq1}
 \operatorname{Im} F'(\bar{x})=Y.
\end{equation}

The following lemma on the local representation of a regular mapping holds.

\begin{lemma}[Lemma 1., Sec.1.3.3. of \cite{IzTr94}]
\label{2lem1}
Let $X$ and $Y$ be Banach spaces, $W$ be a~neighborhood of some point $\bar{x} \in X$, and $F : X \rightarrow Y$ be $\mathcal{C}^1(W)$.
If $F$ is regular at $\bar{x}$, then  there exist a neighborhood $U$ of $0$, a neighborhood $V$ of
	$\bar{x}$, and a diffeomorphism $\varphi:U\rightarrow V$ such that
	\begin{enumerate}
		\item $\varphi(0)=\bar{x}$,
		\item $F(\varphi(x))=F(\bar{x})+F'(\bar{x})x$ for all $x\in U$,
		\item  $\varphi'(0) = I_X$ (the identity mapping on $X$).
		\end{enumerate}
\end{lemma}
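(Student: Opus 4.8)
The plan is to reduce the statement to an application of the classical implicit and inverse function theorems in Banach spaces, using the surjectivity of $A := F'(\bar{x})$ to supply the transverse directions along which $F$ can be locally inverted. Since $A$ maps $X$ onto $Y$, I would first fix a bounded linear right inverse $R \in \mathcal{L}(Y, X)$, i.e. an operator with $A R = I_Y$; equivalently, $P = RA$ is a bounded projection and one gets the splitting $X = \Ker A \oplus R(Y)$ with $R(Y)$ closed. With such an $R$ in hand, the idea is to seek the diffeomorphism in the form
$$
\varphi(x) = \bar{x} + x + R\,\zeta(x),
$$
where $\zeta$ is an unknown $Y$-valued correction with $\zeta(0)=0$; the term $R\,\zeta(x)$ moves points only in the transverse subspace $R(Y)$, which is precisely where $F$ is locally invertible.

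To determine $\zeta$, I would introduce $\Theta : X \times Y \to Y$ defined near the origin by
$$
\Theta(x,\zeta) = F\bigl(\bar{x} + x + R\,\zeta\bigr) - F(\bar{x}) - A x .
$$
Then $\Theta(0,0)=0$, and the partial derivative in $\zeta$ at the origin is $D_\zeta\Theta(0,0) = F'(\bar{x})\,R = A R = I_Y$, a toplinear isomorphism of $Y$. The implicit function theorem then yields a $C^1$ map $\zeta=\zeta(x)$ on a neighborhood of $0$ with $\zeta(0)=0$ and $\Theta(x,\zeta(x))\equiv 0$. Unwinding this identity gives exactly property (2), namely $F(\varphi(x)) = F(\bar{x}) + A x = F(\bar{x}) + F'(\bar{x})x$, while $\varphi(0)=\bar{x}+R\,\zeta(0)=\bar{x}$ is property (1).

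The crucial point for property (3) is the choice of the subtracted linear term $Ax$. Differentiating $\Theta(x,\zeta(x))\equiv 0$ at $x=0$ gives $D_x\Theta(0,0) + D_\zeta\Theta(0,0)\,\zeta'(0)=0$; since $D_x\Theta(0,0) = F'(\bar{x}) - A = 0$ and $D_\zeta\Theta(0,0)=I_Y$, this forces $\zeta'(0)=0$. Hence $\varphi'(0) = I_X + R\,\zeta'(0) = I_X$, which is property (3). Because $\varphi'(0)=I_X$ is invertible, the inverse function theorem shows that $\varphi$ restricts to a $C^1$-diffeomorphism from some neighborhood $U$ of $0$ onto a neighborhood $V$ of $\bar{x}$; shrinking $U$ if necessary delivers the required $U$, $V$, and $\varphi$.

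The main obstacle is the very first step: the existence of a bounded linear right inverse $R$ of $A$, equivalently the requirement that the closed subspace $\Ker F'(\bar{x})$ be complemented in $X$. This is automatic in the cases most relevant here — when $Y$ is finite-dimensional (a closed subspace of finite codimension is always complemented) or when $X$ is a Hilbert space (take the orthogonal complement of the kernel) — so in those settings no extra hypothesis is needed. In a fully general Banach setting it must be imposed as a standing assumption; everything downstream then goes through verbatim, since the only properties of $R$ used are $AR=I_Y$ and its boundedness, which together guarantee the joint $C^1$-regularity of $\Theta$ and the invertibility of $D_\zeta\Theta(0,0)$.
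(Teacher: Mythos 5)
Your argument is the classical splitting-based proof, and under its standing assumption it is sound: granting a bounded linear right inverse $R$ of $A=F'(\bar{x})$, the implicit-function-theorem step, the computation $\zeta'(0)=0$, and the final appeal to the inverse function theorem all go through. But the step you flag as a ``standing assumption'' is precisely a genuine gap, not a removable formality. Lemma~\ref{2lem1} is stated for arbitrary Banach spaces $X$, $Y$ with no complementability hypothesis, and surjectivity of $F'(\bar{x})$ does \emph{not} yield a bounded linear right inverse in general: the open mapping theorem provides only a bounded \emph{set-valued} inverse (for each $y$ a point $x$ with $Ax=y$, $\|x\|\le\kappa\|y\|$), and a bounded \emph{linear} selection exists if and only if $\Ker A$ is complemented in $X$. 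Concretely, take a surjection $A:\ell^1\to\ell^2$ (one exists, since every separable Banach space is a quotient of $\ell^1$); its kernel is not complemented, because otherwise $\ell^2$ would be isomorphic to a complemented infinite-dimensional subspace of $\ell^1$, hence to $\ell^1$ itself. For the linear map $F=A$ your proof cannot even start, yet the lemma holds trivially with $\varphi(x)=\bar{x}+x$. So what you prove is a strictly weaker statement than the one claimed.

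This is exactly the distinction the paper itself draws. The paper gives no proof of Lemma~\ref{2lem1} (it is quoted from \cite{IzTr94}), but in the proof of Theorem~\ref{l:es} it attaches a footnote to this very result stating that under ``additional splitting assumptions, which are not made here,'' the result ``would be a standard consequence of the implicit function theorem'' --- i.e., your route is the one the paper explicitly declines to rely on. The general Banach case requires machinery that never passes through a linear (or even Lipschitz) selection of $A^{-1}$, e.g.\ a Graves--Lyusternik-type iterative construction or the multivalued contraction mapping principle, which is the tool the paper invokes elsewhere (for the generalized Lyusternik theorem, Theorem~\ref{5th1}) precisely when no complement is available. To make your write-up honest you must either add complementability of $\Ker F'(\bar{x})$ to the hypotheses, thereby weakening the lemma, or replace your first step by such a selection-free construction.
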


\ref{2lem1} says that  the diffeomorphism $\varphi$  locally transforms $F$ into the linear
mapping:
\begin{equation}
\label{2eq2}
F(\varphi(x))=F(\bar{x})+F'(\bar{x})x \ \ \text{ for all } x\in U.
\end{equation}
This fact is also  referred to as the local ``trivialization theorem'' (Theorem~1.26 of \cite{ioffe}).
If the regularity condition \eqref{2eq1} is not satisfied, then, in general, the local linearization of $F$ is not possible ($\varphi$ does not exist).

There exist numerous mappings which do not admit local linearization.
 The concept of essentially nonlinear mappings defined in \cite{TrMa03} formalizes this situation.

\begin{definition}
\label{2def2}
Let $V$ be a neighborhood of $\bar{x}$ in $X$ and $U\subset X$ be a neighborhood of $0$. A
mapping $F:V\rightarrow Y$, $F\in  \mathcal{C}^2(V)$, is
 \textit{essentially nonlinear at } $\bar{x}$ if there exists a~perturbation of the form
$$\widetilde{F}(\bar{x}+x)=F(\bar{x}+x)+\omega(x), \hbox{ where } \|\omega(x)\|=o(\|x\|),$$
such that there does not exist any nondegenerate transformation
  $\varphi(x):U\rightarrow V$,  $\varphi\in \mathcal{C}^1(U)$,  such that  $\varphi (0) = \bar{x}$,
   $\varphi ' (0) = I_X$ and Equation \eqref{2eq2}  holds with
$\varphi$ and $\widetilde{F}$.
\end{definition}

\begin{definition}
\label{2def3}
We say that mapping $F$ is \textit{singular} (or  \textit{degenerate})
at $\bar{x}$ if it fails to be regular; that is, its derivative is not onto:
\begin{equation}
\label{2eq3}
\operatorname{Im}  F'(\bar{x})\neq Y.
\end{equation}
\end{definition}

The following Theorem \ref{l:es}, which establishes the relationship between these
two notions of the essential nonlinearity and singularity, was derived in \cite{TrMa03}.  We give its proof here for completeness of our developments.

\begin{theorem}
\label{l:es}
    Suppose $F:V \to Y$ is $C ^2$ and  $\bar{x}$ is a solution of
\textup{(\ref{F(x)=0})}. Then
$F$ is essentially nonlinear
    at the point $\bar{x}$ if and only if $F$ is singular
    at the point $\bar{x}$.
\end{theorem}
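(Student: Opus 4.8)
The plan is to prove the two implications separately, handling the easy direction by contraposition. For \emph{essentially nonlinear} $\Rightarrow$ \emph{singular}, I would instead show that regularity of $F$ at $\bar x$ forces $F$ \emph{not} to be essentially nonlinear. The key observation is that any admissible perturbation $\omega$ with $\|\omega(x)\|=o(\|x\|)$ leaves the first-order data untouched: it satisfies $\omega(0)=0$ and $\omega'(0)=0$, so $\widetilde F(\bar x)=F(\bar x)=0$ and $\widetilde F'(\bar x)=F'(\bar x)$. Hence if $\operatorname{Im}F'(\bar x)=Y$, then $\widetilde F$ is regular at $\bar x$ as well, and Lemma~\ref{2lem1} applied to $\widetilde F$ yields a diffeomorphism $\varphi$ with $\varphi(0)=\bar x$, $\varphi'(0)=I_X$ and $\widetilde F(\varphi(x))=\widetilde F(\bar x)+\widetilde F'(\bar x)x=F(\bar x)+F'(\bar x)x$, i.e. \eqref{2eq2} holds with $\widetilde F$. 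Since this works for \emph{every} admissible perturbation, no perturbation can defeat trivialization, so $F$ is not essentially nonlinear.

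For the substantive direction, \emph{singular} $\Rightarrow$ \emph{essentially nonlinear}, I would exhibit a single perturbation that cannot be trivialized. Since $\operatorname{Im}F'(\bar x)\neq Y$, I would pick $y_0\in Y\setminus\overline{\operatorname{Im}F'(\bar x)}$ and, by Hahn--Banach, a functional $y^*\in Y^*$ vanishing on $\operatorname{Im}F'(\bar x)$ with $y^*(y_0)=1$. Fixing a nonzero $\xi\in X^*$ and a scalar $\lambda$, set $\omega_\lambda(x)=\lambda(\xi(x))^2 y_0$; this is $C^\infty$ and $\|\omega_\lambda(x)\|=O(\|x\|^2)=o(\|x\|)$, so $\widetilde F=F+\omega_\lambda$ is an admissible perturbation. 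Suppose for contradiction that $\widetilde F$ admits a trivialization $\varphi\colon U\to V$ as in \eqref{2eq2}. Because $\varphi$ maps onto the neighborhood $V$ of $\bar x$ and the right-hand side $F'(\bar x)x$ lies in $\operatorname{Im}F'(\bar x)$, applying $y^*$ gives $y^*(\widetilde F(v))=0$ for all $v\in V$; equivalently $h(z):=y^*(\widetilde F(\bar x+z))\equiv 0$ for all small $z$.

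Expanding $h$ to second order and using $y^*(F(\bar x))=0$ together with $y^*(F'(\bar x)z)=0$, the quadratic coefficient of $h$ must vanish identically, which reads
\[
\tfrac12\, y^*\!\left(F''(\bar x)[z]^2\right)+\lambda\,(\xi(z))^2=0\qquad\text{for all }z\in X.
\]
Here $(\xi(\cdot))^2$ is a fixed nonzero quadratic form, so this identity can hold for at most one value of $\lambda$ (subtracting the identity for two values of $\lambda$ would force $(\xi(\cdot))^2\equiv 0$, hence $\xi=0$). Choosing $\lambda$ outside this single exceptional value makes the displayed identity fail, contradicting the existence of a trivialization; therefore $\widetilde F$ cannot be trivialized and $F$ is essentially nonlinear.

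I expect the main obstacle to be functional-analytic rather than computational: producing the separating functional $y^*$ cleanly requires $\operatorname{Im}F'(\bar x)$ to be non-dense (e.g. closed of positive codimension), which is the standard setting here; if the range is proper but dense, the same second-order mechanism must be carried out working modulo $\overline{\operatorname{Im}F'(\bar x)}$, using that a $C^2$ map whose values lie in a closed subspace has all its derivatives in that subspace. The conceptual crux is the verification that the induced quadratic obstruction is genuinely nonzero for a suitable choice of the free parameter $\lambda$, since this is precisely what guarantees that the constructed perturbation defeats \emph{every} candidate trivialization and not merely a particular one.
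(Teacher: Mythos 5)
Your two-implication structure matches the paper's, and your contrapositive direction (regular $\Rightarrow$ not essentially nonlinear) is essentially the paper's own argument: the perturbation leaves the first-order data at $\bar x$ untouched, so the perturbed map is still regular at $\bar x$ and can be trivialized. One caveat there: Lemma \ref{2lem1} requires the map to be $\mathcal{C}^1$ in a neighborhood, but $\widetilde F = F + \omega$ need not be --- the definition of essential nonlinearity only requires $\|\omega(x)\| = o(\|x\|)$, with no smoothness of $\omega$ away from the origin --- so you cannot invoke Lemma \ref{2lem1} directly for $\widetilde F$. The paper is aware of this and instead cites a representation theorem of Izmailov and Tret'yakov that covers such perturbations (see the footnote in its proof); your write-up silently applies a lemma whose hypotheses are not met.

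The substantive gap is in the direction singular $\Rightarrow$ essentially nonlinear. Your construction hinges on a functional $y^* \in Y^*$ vanishing on $\operatorname{Im} F'(\bar x)$ but not identically zero, which exists if and only if $\operatorname{Im} F'(\bar x)$ is \emph{not dense} in $Y$. The theorem assumes only $\operatorname{Im} F'(\bar x) \neq Y$, and in Banach spaces a proper subspace can be dense (e.g.\ the range of an injective compact operator with dense range); in that case every continuous functional vanishing on the image is zero, your $y^*$ does not exist, and your proposed fallback of working modulo $\overline{\operatorname{Im} F'(\bar x)}$ is vacuous because that closure is all of $Y$. The paper's proof avoids this entirely: it picks $\xi \notin \operatorname{Im} F'(\bar x)$ --- mere non-membership in the image, not in its closure --- and perturbs to $\widetilde F(\bar x + x) = F(\bar x) + F'(\bar x)x + \xi\|x\|^2$, deriving the contradiction from subspace membership alone: a trivialization would force $\widetilde F(\varphi(x)) = F'(\bar x)x \in \operatorname{Im} F'(\bar x)$, while the explicit formula gives $\widetilde F(\varphi(x)) = F'(\bar x)(\varphi(x)-\bar x) + \xi\,\|x + \omega_1(x)\|^2$ with a nonzero scalar multiplying $\xi$ for small $x \neq 0$, and such an element cannot lie in the linear subspace $\operatorname{Im} F'(\bar x)$. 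No separating functional is needed. Your parameter-counting trick in $\lambda$ is an elegant way to defeat every candidate trivialization at once, and your argument is correct whenever the image is non-dense (in particular whenever it is closed, the standard $p$-regularity setting), but as written it does not prove the theorem at the stated level of generality.
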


\begin{proof}
Suppose that $F$ is singular at the point $\bar{x}$, $F(\bar{x})=0$, i.e.,
$\operatorname{Im} \, F'(\bar{x}) \neq Y$, so there exists a nonzero
element
$\xi \in Y$ such that  $\| \xi \| = 1$ and
\begin{equation} \label{xi}
   \xi \notin \operatorname{Im} \, F'(\bar{x}) .
\end{equation}
Assume on the contrary that $F$ is not essentially nonlinear at $\bar{x}$.
Define mapping $\tilde{F}$ as
\begin{equation} 
\label{tildeFf}
      \tilde{F} (\bar{x} + x) =  F(\bar{x}) + F'(\bar{x}) x + \xi \| x \|^2.
\end{equation}
Note that $ \xi \| x \|^2 \  \notin \operatorname{Im} \, F'(\bar{x}) $  for any $x\in V$.

By virtue of the above assumptions,
\ref{2lem1} and Equation (\rm \ref{tildeFf}), there exist a neighborhood $U$ of~$0$ and a  mapping $\varphi(x):U\rightarrow V$, $\varphi\in \mathcal{C}^1(U)$, such that $\varphi(0)=\bar{x}$, $\varphi'(0)=I_X$ and
\begin{equation} \label{rep2}
    \tilde{F}(\varphi (x))=\tilde{F}(\bar{x} )+  \tilde{F}' (\bar{x} ) x
                          = F(\bar{x} )+  F' (\bar{x} ) x
               \end{equation}
for all $x\in U $. Since $ F(\bar{x} )=0$ and $ F' (\bar{x} ) x  \in
\operatorname{Im}  F' (\bar{x} ) $, then from (\rm \ref{rep2}) we have
\begin{equation} \label{conclus1}
     \tilde{F}(\varphi (x)) \in  \operatorname{Im}  F' (\bar{x} ).
\end{equation}
However,
using $F(\bar{x} )=0$, $\varphi (0) = \bar{x}$ and $\varphi ' (0) =
I_X$,  we obtain
\begin{equation} \label{F(phi)}
  \begin{array} {ll}
    \tilde{F}(\varphi (x)) & = F(\bar{x} + (\varphi (x) - \bar{x})) \\[2mm]
     & =
        F(\bar{x}) + F'(\bar{x})(\varphi(x) - \bar{x}) + \xi \| \varphi(x) - \bar{x} \|^2 \\[2mm]
     & =
        F'(\bar{x})(\varphi(x) - \bar{x}) +
         \xi \| \varphi(0) + \varphi'(0)x+ \omega_1(x) -  \bar{x} \|^2 \\[2mm]
     & =   F'(\bar{x})(\varphi(x) - \bar{x}) +
         \xi \|x+ \omega_1(x)\|^2 ,
   \end{array}
\end{equation}
where $\| \omega_1(x)\| = o (\|x\|)$. Thus, for small $x$,
$$
     \xi \|x+ \omega_1(x)\|^2  \neq 0.
$$
Taking into account (\rm \ref{xi}),  (\rm \ref{F(phi)}) and the fact that
$ F' (\bar{x} ) (\varphi(x)-\bar{x})  \in \operatorname{Im} F' (\bar{x}
) $, we conclude from this that
\begin{equation} \label{conclus2}
     \tilde{F}(\varphi (x)) \notin  \operatorname{Im}  F' (\bar{x} ).
\end{equation}
This contradicts \eqref{conclus1} and therefore $F$ is essentially
nonlinear at $\bar{x}$.

\medskip

To prove the converse, suppose that $F$ is essentially nonlinear at
$\bar{x}$,  but that $F$ is not singular; i.e., is regular at this point.
Then by persistence of the regularity condition, for any perturbation
$$ \widetilde{F}(\bar{x}+x) = F(\bar{x}+x) + \omega(x), $$
where $ \| \omega(x) \| = o (\|x \|)$,
the map $ \widetilde{F}(\bar{x}+x)$ is regular at $\bar{x}$ and
$ F'(\bar{x}) = \widetilde{F}'(\bar{x})$.
Hence, by virtue of a Theorem concerning the representation of a regular
mapping (see Izmailov and  Tret'yakov
\cite{IzTr94})\footnote{Under additional splitting assumptions, which are
not made here, this result would be a standard consequence of the implicit
function theorem, as in, for example, \cite{AbMaRa1988}, \S2.5.},
$ \widetilde{F}(\bar{x}+x)$ is represented as
$$
    \tilde{F}(\varphi (x))  = \tilde{F}(\bar{x}) +  \tilde{F}'(\bar{x}) x,
$$
where $\varphi (0) = \bar{x}$ and $\varphi ' (0) = I_X$.
It contradicts to the definition of essential
nonlinearity of the mapping~$F$.
\end{proof}

\section{Elements of $p$-regularity theory}
\label{sec4}
For the purpose of describing essentially nonlinear problems,
the concept of $p$-regularity was introduced by Tret'yakov \cite{Tr83,
Tr84,  Tr87} using the notion of a $p$-factor  operator. Let us recall main definitions of the $p$-regularity theory, which are presented, for example, in \cite{IzTr94, Tr87, TrMa03}.

We construct the  $p$-factor  operator  under the {\it assumption} that the
space $Y$ is decomposed into the (topological) direct sum
\begin{equation} \label{Y=sum}
   Y = Y_1 \oplus \ldots \oplus Y_p,
\end{equation}
where
$
   Y_1 = {\rm cl}\, (\operatorname{Im} \, F '(\bar{x})),
$
the closure of the image of the first derivative of $F$ evaluated at 
$\bar{x}$, and the remaining spaces are defined as follows.  Let $Z_{1}=Y$ and let
$Z _2 $ be a closed complementary subspace to $Y_1 $ (we are
{\it assuming} that such a closed complementary subspace exists) and
let $P_{Z_2} : Y \to Z_2$
be the projection operator onto $Z_2$ along $Y_1$. Let
$Y_2$ be the closed linear span of the image of
the quadratic map $P_{Z_2}F ^{(2)}(\bar{x})[ \cdot ] ^2$. More
generally, define inductively,
$$
   Y_i = {\rm cl}\, ({\rm span}
\; \operatorname{Im} P_{Z_i} F^{(i)} (\bar{x}) [\cdot]^i )
   \subseteq Z_i,\quad
   i = 2,\,\dots, p-1, \ \  p>2,
$$
where $Z_i$ is a choice of the closed complementary subspace for $(Y_1 \oplus
\ldots \oplus Y_{i-1})$ with respect to $Y$, $i=2,\,\dots,p$, and $P_{Z_i}
: Y \to Z_i$ is the projection operator onto $Z_i$ along $(Y_1 \oplus
\ldots \oplus Y_{i-1})$ with respect to $Y$, $i=2,\,\dots,p$.
Finally, let
$
   Y_p = Z_p.
$
The order $p$ is chosen as the minimum number for which
Equation \eqref{Y=sum} holds. In particular, for $p=2$, we have $Y=Z_{1}=Y_{1}\oplus Z_{2}$. 
When $Y$ is a Hilbert space, there exists a complementary subspace  to $Y_{1}$,  i.e. the orthogonal subspace, $Y_{2}=Y_{1}^{\perp}$.

\begin{remark}
    The subspaces $Y_i$ in assumption \eqref{Y=sum} can be replaced in further consideration by subspaces constructed using the so-called factorization procedure. Namely,   $$ Y_1 = {\rm cl}\, (\operatorname{Im} \, F '(\bar{x})),$$ as before, but, instead of  $Y_2$, 
    the space $ Y/Y_1$, called {\em a quotient or factor space}, is used. 
    Note that the quotient space is a Banach space, see {\em e.g.} \cite{optcontrol}. Moreover, 
    %by Theorem 1 of Halmos \cite{Halmos1958} 
    if \eqref{Y=sum} holds, then $Y_2$ is isomorphic to  $ Y/Y_1$. We use the assumption \eqref{Y=sum} for simplicity of our presentation.
\end{remark}

Define the following mappings (see Tret'yakov \cite{Tr87})
\begin{equation}
\label{4eq2}
   f_i(x): X \to Y_i, \quad
   f_i(x) = P_{Y_i} F(x), \quad i=1,\,\dots,p,
\end{equation}
where
$P_{Y_i} : Y \to Y_i$ is the projection operator onto
$Y_i$ along
$(Y_1 \oplus \ldots \oplus Y_{i-1} \oplus Y_{i+1} \oplus \ldots \oplus Y_p)$
with respect to $Y$, $i=1,\,\dots,p$.
%\ewa{
Recall that  $P_{Y_{i}}$ is the projection onto
$Y_i$ along (or parallel to)
$W_{i}=(Y_1 \oplus \ldots \oplus Y_{i-1} \oplus Y_{i+1} \oplus \ldots \oplus Y_p)$ if $\Ker  P_{Y_{i}}=W_{i}$. Moreover, 
\begin{equation} 
\label{image_projection}
f^{(k)}_{i}(\bar{x})=P_{Y_{i}}F^{(k)}(\bar{x})=0, \quad i=1,\ldots,p, \quad k=1,\ldots,i-1.
%, \ i.e.\ \operatorname{Im}f^{(k)}_{1}(\bar{x})=\operatorname{Im}F'(\bar{x})\ \ true???
\end{equation}
%This follows from the linearity of  projection??
%}

\begin{definition}
\label{4def1}
    The linear operator $\Psi_p(h) \in \mathcal{L} (X, Y_1 \oplus \ldots
\oplus Y_p)$, where $ h \in X$,  $h\neq 0$, is defined by
\begin{equation}
\label{4eq3}
      \Psi_p(h) = f_1 '(\bar{x})  +  f_2 ''(\bar{x}) [h]  + \ldots +
                       f_p^{(p)} (\bar{x})[h]^{p-1} ,
\end{equation}
and is called the {\em $p$-factor  operator}. Alternatively, the following form of the  $p$-factor  operator can be used:
$$   \Psi_p(h) = f_1 '(\bar{x})  + \frac{1}{2!} f_2 ''(\bar{x}) [h]  + \ldots +
                      \frac{1}{p!} f_p^{(p)} (\bar{x})[h]^{p-1} .
$$
\end{definition}

\begin{remark} 
Every mapping $f_{i}(x)$ is completely degenerate at the point $\bar{x}$ up to the order $i-1$, $i=1,\ldots,p$. 
\end{remark}

 Note that in the {\em completely degenerate case}, i.e., in the case when
$$
   F^{(r)}(\bar{x}) = 0, \quad r=1,\dots,p-1 ,
$$
the $p$-factor  operator  is simply $F^{(p)} (\bar{x})[h]^{p-1}$. Observe that, when $Y_{1}=Y$, i.e. $F$ is regular at $\bar{x}$, then $\Psi_p(h)\equiv F'(\bar{x})\equiv f_{1}'(\bar{x})$.

For $p=2$, the $p$-factor-operator \eqref{4eq3} takes the form
\begin{equation}
\label{4eq4}
	\Psi_{2}(h)=f'_{1}(\bar{x})+
	f''_{2}(\bar{x})h,
\end{equation}
or, equivalently,
$\Psi_{2}(h)=f'_{1}(\bar{x})+
	\dfrac12 f''_{2}(\bar{x})h,
	$
	where $h\in X$, $h \neq 0$. In view of \eqref{image_projection} we see that the construction of the  operator $\Psi_{2}(h)$ (and $\Psi_{p}(h)$, in general) is strongly related to the decomposition of the image space \eqref{Y=sum}. The idea is to use higher-order derivatives of $F$ to obtain \eqref{Y=sum}, whenever possible. In particular,  for $p=2$, and $\Psi_{2}(h)$ given by \eqref{4eq4}, we seek for those $h\in X$ that ensure the equality $\operatorname{Im} f''_{2}(\bar{x})h=Z_{2}$, where $Z_{2}$ is the complementary space to $Y_{1}$.

Recall that a bounded linear operator $T: X \to Y$ between Banach spaces $X$ and $Y$ is Fredholm if  the kernel of $T$ has a finite dimension and the image of $T$ is a closed subspace of a finite
		codimension in Y. 
%		Recall that a $C^{k}$ mapping $F$ is a Fredholm mapping at some point $\bar{x}$ if its derivative is a Fredholm operator, i.e. the kernel of $F'(\bar{x})$ has a finite dimension and the image $\operatorname{Im} F'(\bar{x})$ is a closed subspace of a finite
%		codimension in Y. 
Hence, in the case of Fredholm operator $F'(\bar{x})$, the subspace $Y_{1}=\operatorname{Im} F'(\bar{x})$ has a complementary finite-dimensional subspace $Z_2$ such that $Y=Y_{1}\oplus Z_{2}$.

\begin{definition}
\label{4def3}
 We say that the mapping $F$ is \emph{$p$-regular at $\bar{x}$} along an element $h \in X$, if
$$\hbox{Im}\ \Psi_{p}(h)=Y.$$
\end{definition}

\begin{remark}
\label{4rem4}
The condition of $p$-regularity of the mapping $F$ at the point $\bar{x}$ along $h \in X$ is equivalent to the following condition
\begin{equation}
\label{eq-rem-3.7}
\operatorname{Im}  f^{(p)}_p(\bar{x})[h]^{p-1}\left( \Ker   \Psi_{p-1}(h)\right)=Y_p,
\end{equation}
where $\Psi_{p-1}(h)=f_1'(\bar{x})+f_2''(\bar{x})[h]+\cdots +f_{p-1}^{(p-1)}(\bar{x})[h]^{p-2}.$ 
In particular, when $p=2$, $\Psi_1(h)=f_1'(\bar{x})$ and \eqref{eq-rem-3.7} 
takes the form $\operatorname{Im} f^{\prime \prime}_2(\bar{x})[h]\left( \Ker   f_{1}'(\bar{x})\right)=Y_2$, which is a consequence of elementary algebraic facts.

\end{remark}

We also define the $k$-kernel of the $k$th-order mapping $f_k^{(k)}(\bar{x})$ by
\begin{equation}
\label{KKer}
    \Ker^k f_k^{(k)}(\bar{x})=\{ \xi \in
    X \mid f_k^{(k)}(\bar{x})[\xi]^k=0\}.
\end{equation}

%\ewa{ why such $H_{p}$ are important? Some comments for the reader}
\begin{definition}\label{definition_H_p}
We say the mapping $F$ is {$p$-regular at  $\bar{x}$}
 if it is $p$-regular along any $h$ from the set
\begin{equation} \label{Hp}
   H_p(\bar{x}) =
    \left\{h \in X \mid  h \in \mathop{\bigcap}\limits_{i=1}^p \operatorname{Ker}^i f_i^{(i)}
(\bar{x})
\right\}
    \backslash \{ 0 \},
\end{equation}
where the $i$-kernel of the $i$th-order mapping $f_i^{(i)}(\bar{x})$ is
defined in Equation \eqref{KKer}.
\end{definition}

For a linear surjective operator  $\; \Psi_{p}(h) :X\rightarrow Y$
between Banach spaces, we denote its
\emph{right inverse} by $\; \{\Psi_{p}(h)\}^{-1}$ (see \cite{theoryofetremal}).
Therefore, $\{\Psi_{p}(h)\}^{-1}:Y
\rightarrow 2^{X}$ and we have
\begin{equation}
\label{4eq5}
\{\Psi_{p}(h)\}^{-1}(y)=\left\{x\in X\mid \Psi_{p}(h) x =
y\right\}.
\end{equation}
We define the \textit{norm} of $\{\Psi_{p}(h)\}^{-1}$ by
\begin{equation}
\label{4eq6}
\|\{\Psi_{p}(h)\}^{-1}\|=\sup_{\|y\|=1}\inf \{\|x\| \mid 
x\in\{\Psi_{p}(h)\}^{-1}(y)\}.
\end{equation}
We say that $\{\Psi_{p}(h)\}^{-1}$ is \emph{bounded}  if
$\|\{\Psi_{p}(h)\}^{-1}\|<\infty.$

%\ewa{ some comments to the reader about the definition below?}
\begin{definition}
\label{5def2}
A mapping $F\in \mathcal{C}^p$ is called {strongly $p$-regular} at a point $\bar{x}$ if there exists $\alpha >0$ such that
$$\sup_{h\in H_{\alpha}} \left\|\{\Psi_p(h)\}^{-1}\right\|<\infty,$$
where $\{\Psi_p(h)\}^{-1}$ is the right inverse operator  of $\Psi_p(h)$ and 
 $$
         H_{\alpha} = \left\{ h \in X \, | \, \|f_i^{(i)} (\bar{x}) [h]^i
\|_{Y_i}
\leq
         \alpha \quad \mbox{{\rm for all}} \quad  i =1,\ldots,p, \quad \|h
\|_X = 1
\right\}.
     $$
\end{definition}

%\medskip

%{\color{blue} {Is it true?
%\paragraph{Remarks}
%Not only  $\Psi_p$ but also every $Y_i$ in $Y_1 \oplus \ldots \oplus %Y_p$
%depends on the choice of the element $h$. To simplify our notations we
%write $Y_i$ instead of $Y_i(h)$, $i=1,\ldots,p$.
%}}

\begin{example}
 \label{3ex1}
Consider mapping $F:\mathbb{R}^{2}\rightarrow \mathbb{R}^{2}$ defined by
    $$F(x)=\left(
           \begin{array}{c}
             x_1+x_2 \\
             x_1x_2 \\
           \end{array}
         \right).
$$
Let $\bar{x}=(0,0)^T$. Then the Jacobian $
    F'(\bar{x})=\left(
           \begin{array}{cc}
             1 & 1 \\
             0& 0 \\
           \end{array}
         \right)
$ is singular (degenerate) at $\bar{x}.$
Hence,
$\operatorname{Im}  F'(\bar{x})=\operatorname{span}\{(1,0)\}\neq \mathbb{R}^2$. Let
   $Y_1=\operatorname{span}\{(1,0)\}$ and
 $Y_2=\operatorname{span} \{ (0,1)\}.$
 To construct the $2$-factor operator, we use the projection matrices
 $$
P_{Y_1}=\left(
 \begin{array}{cc}
 	1 & 0 \\
 	0 & 0 \\
 \end{array}
 \right) \quad  \mbox{and} \quad P_{Y_2}=\left(
 \begin{array}{cc}
 0 & 0 \\
 0 & 1 \\
 \end{array}
 \right).
$$
According to Equation \eqref{4eq2}, the mappings $f_1:\mathbb{R}^2\rightarrow Y_1$ and $f_2:\mathbb{R}^2\rightarrow Y_2$ have the form
 $$
 f_1(x)=\left(
           \begin{array}{c}
             x_1+x_2 \\
             0 \\
           \end{array}
         \right) \quad   \mbox{and} \quad    f_2(x)=\left(
           \begin{array}{c}
             0 \\
             x_1x_2 \\
           \end{array}
         \right). 
$$
 Then 
$$
 f_1^{\prime}(x)=\left(
           \begin{array}{cc}
             1 & 1 \\
             0 &0\\
           \end{array}
         \right),\quad      f_2^{\prime}(x)=\left(
           \begin{array}{cc}
             0& 0 \\
             x_2 & x_{1} \\
           \end{array}
         \right) 
$$
and 
$$
 f_2^{\prime \prime}(x)h=\left(
           \begin{array}{cc}
             0& 0 \\
             h_{2} & h_{1} \\
           \end{array}
         \right).
$$

Hence, for $h=(h_1,h_2)^T \in \mathbb{R}^2$, the $2$-factor operator is defined by
$$
\Psi_{2}(h)=f'_{1}(\bar{x})+
	f''_{2}(\bar{x})h =
\left(
           \begin{array}{cc}
             1& 1 \\
             h_{2} & h_{1} \\
           \end{array}
         \right).
$$
It is easy to see that if $h_1\neq h_2$, then the $2$-factor operator is surjective.

In this example, we have
$$\Ker^1f'_1(\bar{x})=\operatorname{span} \left\{ \left(1,-1\right)\right\} \quad \mbox{and}
\quad  \Ker^2f''_2(\bar{x})=\operatorname{span} \left\{ \left( 1,0 \right)\right\}\cup \operatorname{span} \left\{ \left(0,1 \right)\right\}.$$
It means that $H_{2}(\bar{x})=\emptyset$. 
Hence, according to \ref{4def3}, mapping $F$ is 2-regular at $\bar{x}$ along any $h\in X$ such that $h_1\neq h_2$, but is not 2-regular at $\bar{x}$. 
As we see, it may happen that $F$ is $2$-regular along some $h\in X$ but $H_{p}(\bar{x})=\emptyset$.
Therefore, a given mapping $F$ may be not 2-regular with respect to
all $h\in X$, $h \neq 0$.
\end{example}

\begin{example}
\label{ex:3-regularity}
%{\color{red} I have reworked the details in this example. Please verify!}

Case $p=3$. Consider mapping $F:\mathbb{R}^{2}\rightarrow \mathbb{R}^{3}$ defined by
    $$F(x)=\left(
           \begin{array}{c}
             x_1+x_2 \\
             x_1x_2^{2} \\
             x_{1}^{3}
           \end{array}
         \right).
$$
With $\bar{x}=(0, 0)^T = 0$ we get
    $$F'(x)=\left(
           \begin{array}{cc}
             1&1 \\
             x_2^{2}&2x_{1}x_{2} \\
             3x_{1}^{2}&0
           \end{array}
         \right)
 \quad \mbox{and} \quad F'(0)=\left(
           \begin{array}{cc}
             1&1 \\
             0&0\\
             0&0
           \end{array}
         \right).
$$
                  Then with $h = (h_1, h_2)^T$,
         $$
F^{\prime}(x)h=\left( \begin{array}{cc}
             h_1 + h_2 \\
            x_2^{2}h_1 + 2x_{1}x_{2} h_2 \\
             3x_{1}^{2}h_1
           \end{array}
         \right),
         $$
       $$
F^{\prime \prime}(x)[h]^2=\left( \begin{array}{c}
             0 \\
            4x_{2}h_1 h_2 + 2x_{1} h_2^2 \\
            6x_1 h_1^2
           \end{array} \right), \quad 
F^{\prime \prime \prime}(x)[h]^2=
         \left( \begin{array}{cc}
             0 & 0\\ 
            2h_{2}^{2} & 4h_{1}h_{2}\\
            6h_{1}^{2} & 0
           \end{array}
         \right).
         $$
  In this example, 
$$Y_{1}=\text{Im} F'(0)={\rm span}\{(1,0,0)\}, \quad Y_2 =(0,0,0), \quad Y_{3}={\rm span}\{(0,1,0),(0,0,1)\}.$$ To construct the $3$-factor operator, we use the projection matrices
$$
 P_{Y_1}=\left(
 \begin{array}{ccc}
 	1 & 0 & 0\\
 	0 & 0 & 0 \\
	 	0 & 0 & 0 
 \end{array}
 \right) \quad \mbox{and} \quad P_{Y_3}=\left(
 \begin{array}{ccc}
  	0 & 0 & 0 \\
 0 & 1 & 0 \\
  0 & 0&  1 
 \end{array}
 \right).$$
Then, by using Equation \eqref{4eq2}, we define
$f_{1}$ and $f_{3}$ as
$$
f_{1}(x)=
P_{Y_1} F(x)
=\left( \begin{array}{cc}
             x_{1}+x_{2} \\
             0\\
             0
           \end{array}
         \right) \quad \mbox{and} \quad
         f_{3}(x)=P_{Y_3} F(x)
=\left( \begin{array}{cc}
             0 \\
            x_1x_2^{2} \\
             x_{1}^{3}
           \end{array}
         \right).
         $$

By the definition of the 3-factor-operator, we get
\begin{eqnarray*}
\Psi_{3}(h) &=& f'_{1}(\bar{x})+
	f''_{3}(\bar{x})[h]^2,                \\ 
&=& 
\left( \begin{array}{cc}
             1&1 \\
             0&0\\
             0&0
           \end{array}
         \right)+ \left( \begin{array}{cc}
             0 & 0\\ 
            2h_{2}^{2} & 4h_{1}h_{2}\\
            6h_{1}^{2} & 0
           \end{array}
         \right)=\left(\begin{array}{cc}
             1 & 1\\ 
           2 h_{2}^{2} & 4 h_{1}h_{2}\\
            6h_{1}^{2} & 0.
           \end{array}
         \right).
\end{eqnarray*}

For $h=(h_{1},0)$, the $3$-factor operator takes the form
$$
\Psi_{3}(h)=\left(\begin{array}{cc}
             1 & 1\\ 
           0 & 0\\
            6h_{1}^{2} & 0
           \end{array}
         \right)
$$
and $\text{cl Im}\Psi_{3}(h)=\text{span}\{(1,0,0),(1,0,1)\}$.

For $h=(0, h_{2})$, the $3$-factor operator takes the form
$$
\Psi_{3}(h)=\left(\begin{array}{cc}
             1 & 1\\ 
           2h_{2}^{2} & 0\\
            0 & 0
           \end{array}
         \right)
$$
and $\text{cl Im}\Psi_{3}(h)=\text{span}\{(1,1,0),(1,0,0)\}$.

Now we calculate the kernels according to \eqref{KKer} with $\xi = (\xi_1, \xi_2)$:
$$
f_1^{\prime}(\bar{x})\xi=\left( \begin{array}{cc}
             \xi_1 + \xi_2 \\
0\\
  0
           \end{array}
         \right)= \left( \begin{array}{c}
         0\\0\\0 \end{array}\right), \quad 
         f^{\prime \prime \prime}_{3}(\bar{x})[\xi]^3 = \left( \begin{array}{c}
             0 \\ 
            6 \xi_{1} \xi^2_{2}\\
            6 \xi_{1}^{3} 
           \end{array}
         \right)
         = \left( \begin{array}{c}
         0\\0\\0 \end{array}\right)        
$$
so $$
         \Ker f_{1}^{\prime}(0)=\{\xi=(\xi_{1},\xi_{2})\ |\ \xi_{1}+\xi_{2}=0\}=\text{span}\{(1,-1)\}
         $$
         and 
         $$
           \Ker f_{3}^{\prime \prime \prime}(0)=\{\xi=(\xi_{1},\xi_{2})\ |\ 6\xi_{1}\xi_{2}^{2}=0,\   6\xi_{1}^{3}=0\}=\text{span}\{(0,1)\}.\ \
           $$
As easy to verify,           $$
           \Ker f_{2}^{\prime \prime}(0,0)=\mathbb{R}^{2}. $$

\end{example}

\medskip

\section{Singular problems and classical results via the $p$-regularity theory}
\label{sec:Examples}
\subsection{Lyusternik theorem and description of solution sets}
\label{Sec:LT}
Lyusternik theorem plays an important role in the description of the solution sets of nonlinear equations and feasible sets of optimization problems in the regular case.

The Lyusternik theorem has practical applications in various fields.
The theorem is important for study of optimization and variational problems. By describing the tangent cone, the theorem provides information about the critical points and the behavior of the solutions near those points.
In control theory, the Lyusternik theorem can be employed to analyze the stability and controllability of nonlinear control systems. By investigating the tangent cone, one can gain insights into the behavior of the system near critical points and determine the conditions for stability and controllability.
The Lyusternik theorem can also be utilized in the development and analysis of optimization algorithms, such as gradient-based methods. By characterizing the tangent cone, the theorem helps in designing efficient algorithms and understanding their convergence properties.

These are just a few examples of the practical applications of the Lyusternik theorem. Its insights into the tangent cone are valuable in various fields, ranging from optimization and control theory to PDEs and geometry, providing a deeper understanding of the behavior of solutions and critical points in different mathematical problems.

Let $X,Y$ be normed linear spaces. Consider a nonlinear mapping $F: U \to Y$, where $U$ is some neighborhood of a point $\bar{x} \in X$.
We are interested in the description of the {\em solution set} $M(\bar{x})$,
\begin{equation} \label{M}
M(\bar{x})= \left\{ x \in U \mid F(x)=F(\bar{x})\right\},
\end{equation}
for nonlinear equation
\eqref{F(x)=0}. This is also the feasible set for an
optimization problem \eqref{phi-min}. In particular, if
$\bar{x}$ is a  zero of $F$, the set $M(\bar{x})$ is the zero-set of $F$.

It will be useful to recall the following definition
of tangent vectors and tangent cones (see, for instance,  \cite{Clarke1983}).

\begin{definition}
\label{Def:TC}
    We call $h$ a {tangent vector} to a set $M \subseteq X$ at $\bar{x}
\in M$
    if there exist $\varepsilon > 0$ and
    a function $r: [0,\varepsilon] \rightarrow X$ with the property that
    for $t \in [0,\varepsilon]$, we have
    $
       \bar{x}+th+ r(t) \in M
    $
    and
    $$
       \mathop{\rm lim}\limits_{t \to 0}  \dfrac{\|r(t)\|}{ t} =  0.
    $$
    The collection of all tangent vectors at $\bar{x}$ is called the
{tangent cone}
    to $M$ at $\bar{x}$ and it is denoted by  $T_1 M(\bar{x})$.
\end{definition}

\subsubsection{Lyusternik theorem in the regular case}
  In the regular case, the Lyusternik theorem(see \cite{LiSo61}) can be formulated as follows.

\begin{theorem}[Lyusternik Theorem]
\label{Th:LT}
    Let $X$ and $Y$ be Banach spaces and $U$ be a neighborhood
    of $\bar{x}$ in $X$.
    Suppose $F: U\to Y$ is Fr\'echet differentiable on $V$, and the mapping
    $F': U\to \mathcal{L}(X,\, Y)$ is continuous at $\bar{x} $.
    Suppose further that $F$ is re\-gu\-lar at $\bar{x}$.

    Then  the tangent cone to the set
    $M(\bar{x})$ defined in \eqref{M}
    is the linear space that is equal to the kernel of $F'(\bar{x} )$:
    \begin{equation} \label{Ker=M}
      T_1 M(\bar{x}) = \Ker  F'(\bar{x} ).
    \end{equation}
\end{theorem}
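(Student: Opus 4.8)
The plan is to establish the set equality \eqref{Ker=M} by proving the two inclusions separately; only one of them will use the regularity hypothesis.

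First I would prove the inclusion $T_1 M(\bar{x}) \subseteq \Ker F'(\bar{x})$, which holds without regularity. Let $h \in T_1 M(\bar{x})$, so by Definition \ref{Def:TC} there are $\varepsilon > 0$ and a function $r(\cdot)$ with $\bar{x} + th + r(t) \in M(\bar{x})$ and $\|r(t)\|/t \to 0$. Since $F(\bar{x}+th+r(t)) = F(\bar{x})$, Fr\'echet differentiability at $\bar{x}$ gives
$$0 = F'(\bar{x})(th + r(t)) + o(\|th + r(t)\|) = t\, F'(\bar{x}) h + F'(\bar{x}) r(t) + o(t).$$
Dividing by $t$ and letting $t \downarrow 0$, the term $F'(\bar{x}) r(t)/t$ vanishes because $F'(\bar{x})$ is bounded and $\|r(t)\|/t \to 0$, while the remainder divided by $t$ also vanishes since $\|th+r(t)\| = O(t)$. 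Hence $F'(\bar{x}) h = 0$, that is, $h \in \Ker F'(\bar{x})$.

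For the reverse inclusion I would exploit regularity. First note that, although the hypotheses only state Fr\'echet differentiability near $\bar{x}$ with $F'$ continuous at $\bar{x}$, the mean value inequality together with continuity of $F'$ at $\bar{x}$ shows that $F$ is strictly differentiable at $\bar{x}$; since $F$ is also regular there, Theorem \ref{Lyusternik-Graves} applies. Fix $h \in \Ker F'(\bar{x})$, $h \neq 0$, and set $x_t := \bar{x} + th$. The key observation is that the residual is abnormally small: since $F'(\bar{x}) h = 0$,
$$\|F(x_t) - F(\bar{x})\| = \|t\, F'(\bar{x}) h + o(t)\| = o(t).$$
The main step, and the principal obstacle, is to convert this smallness into a genuinely nearby exact solution of $F(x) = F(\bar{x})$; this is exactly where surjectivity of $F'(\bar{x})$ is indispensable. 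Choosing any $r$ with $0 < r < C(F'(\bar{x}))$, Theorem \ref{Lyusternik-Graves} yields $\varepsilon > 0$ such that $B(F(x), r s) \subseteq F(B(x, s))$ whenever $\|x - \bar{x}\| < \varepsilon$ and $0 \le s < \varepsilon$. For small $t$ I would apply this at $x = x_t$ (which satisfies $\|x_t - \bar{x}\| = t\|h\| < \varepsilon$) with radius $s(t) := r^{-1}\|F(x_t) - F(\bar{x})\| + t^2 = o(t)$. Then $\|F(\bar{x}) - F(x_t)\| < r\, s(t)$, so $F(\bar{x}) \in B(F(x_t), r s(t)) \subseteq F(B(x_t, s(t)))$, producing $x^\ast \in B(x_t, s(t))$ with $F(x^\ast) = F(\bar{x})$, i.e. $x^\ast \in M(\bar{x})$. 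Setting $r(t) := x^\ast - x_t$ gives $\bar{x} + th + r(t) = x^\ast \in M(\bar{x})$ and $\|r(t)\| < s(t) = o(t)$, whence $\|r(t)\|/t \to 0$ and $h \in T_1 M(\bar{x})$ by Definition \ref{Def:TC}.

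Combining the two inclusions yields \eqref{Ker=M}. As an alternative to invoking Theorem \ref{Lyusternik-Graves}, one could run a direct modified-Newton iteration $x_{n+1} = x_n - \xi_n$, where $\xi_n$ is a norm-controlled right-inverse solution of $F'(\bar{x})\xi_n = F(x_n) - F(\bar{x})$ supplied by the Banach open mapping principle, Theorem \ref{banach}(c); continuity of $F'$ at $\bar{x}$ renders the iteration contractive and delivers the same $o(t)$ estimate for its limit. I expect that verifying this $o(t)$ bound would be the only delicate bookkeeping in that variant, whereas in the approach above the entire difficulty is packaged inside Theorem \ref{Lyusternik-Graves}.
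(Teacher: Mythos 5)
Your proof is correct and follows essentially the route the paper takes: the paper states Theorem \ref{Th:LT} without proof (citing the literature), but explicitly presents it as a consequence of Theorem \ref{Lyusternik-Graves}, which is precisely your main tool for the inclusion $\Ker F'(\bar{x}) \subseteq T_1 M(\bar{x})$ — including the correct observation that differentiability near $\bar{x}$ plus continuity of $F'$ at $\bar{x}$ upgrades to strict differentiability so that the theorem applies. Your argument for the opposite inclusion $T_1 M(\bar{x}) \subseteq \Ker F'(\bar{x})$ is the same Taylor-expansion-and-divide-by-$t$ argument that the paper uses (in its $p$th-order form) to prove the corresponding inclusion of the generalized Theorem \ref{5th1}.
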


  If
  $F$ is singular at  $\bar{x}$,
  then, in some problems, $T_1M(\bar{x})\neq  \Ker  F'(\bar{x})$, as in the following example.

\begin{example}
\label{Ex:L}
Let $X = \mathbb{R}^2$, $x = (x _1, x _2) \in X$, and $F : \mathbb{R}^2 \rightarrow \mathbb{R}$ be defined by $F(x) =x_1^2-x_2^2+o(\|x\|^2).$
For the point
$\bar{x}=0$, we get $F(0)  = 0$, $\Ker  F'(0)= \mathbb{R}^2$, and  $T_1M(0)={\rm span} \{(1,1)\} \cup {\rm span} \{(1,-1)\}$. Hence, $T_1M(0) \neq \Ker  F'(0)$.
\end{example}

\begin{example}
Let $F: C([0,1])\rightarrow C([0,1])$ be defined as $F(x(\cdot))= x(\cdot).$ Then $M=\{ x(\cdot)\in C([0,1]) \mid F(x(\cdot))=F(0)=0 \}= \{ 0\}$ and $F'(x(\cdot))=1$ is onto. We have $\Ker F'(0)=\{0\}=T_1M(0).$ 
\end{example}

\begin{example}
    Let $M=\left\{ x(\cdot)\in C([0,1])\mid \int\limits_{0}^{1} \sin x(t) dt = \dfrac 2 \pi \right\}$ and $\bar{x}=\pi t$. To calculate $T_{1} M(\bar{x})$, it is enough to apply Lyusternik theorem with $F:C([0,1]) \rightarrow \mathbb{R}$ is defined as $F(x(\cdot))=\int\limits_0^1 \sin x(t) dt.$ %Let us define function of two variables $g(x,t):=\sin(x(t))$. Then we can rewrite $F$ as $F(x)=\int_0^1 g(x,t)dt$ and $F'(x)=\frac{d}{dx} \int\limits_0^1 g(x,t) dt = \int\limits_0^1 \frac{\partial}{\partial x} g(x,t) \sin x(t) dt=\int\limits_0^1 \frac{d}{d x} \sin x(t) dt.$ 
    
   % (Alternative calculation of derivative:
    
    We have $$\frac{F(x+h)-F(x)}{\|h\|}= \int_0^1 \sin x(t) \frac{\cos h(t)-1}{\|h\|} dt + \int_0^1 \cos x(t) \frac{\sin h(t)}{\|h\|} dt .$$ 
    First term in the right-hand side approaches $0$ as $\|h\|$ goes to $0$. In the second term, we can use the fact that $\dfrac{\sin h(\cdot)}{\|h\|}$ approaches $1$ as $\|h\|$ goes to 0.%) 
    
    Therefore, $$F'(\bar{x}(\cdot))(x(\cdot))=\int_0^1 x(t) \cos \bar{x}(t) dt$$ 
    is onto in $\mathbb{R}$, and so $\Ker F'(\bar{x})=T_1M(\bar{x})=\{ x(\cdot)\in C([0,1]) \mid \int_0^1 x(t) \cos \bar{x}(t) dt=0\} .$
\end{example}

The problem of description of the solution sets in more general situations (for example, nonlinear systems of inequalities) is approached qualitatively  by means of metric regularity (\cite{DoLeRo02,Io2000,ioffe})
 and via geometrical derivability (\cite{SeTa06}).

\subsubsection{A generalization of the Lyusternik theorem}
\label{sub5.1}

Consider the problem of description of the {\em solution set} $M(\bar{x})$ in the nonregular case.
As we demonstrated in Example \ref{Ex:L}, the classical Lyusternik Theorem \ref{Th:LT} might not hold if
  $F$ is singular at  $\bar{x}$, so that 
$T_1M(\bar{x})\neq  \Ker  F'(\bar{x})$.

The first generalization of the classical Lyusternik theorem for
$p$-re\-gu\-lar mappings
 was derived and proved simultaneously in \cite{BMS} and \cite{Tr83} see also \cite{IzTr94}. It can be applied
to describe the zero set of a $p$-regular mapping.

%For proving Generalized Lyusternik Theorem we will use Generalized Contaction Mapping Principle.
%\begin{lemma}[Optimal control Alekseev\ldots.]
%\label{lemma-gen-contr}
 %   Let $T$ be a topological space, let $Y$ be a Banach space, let $V$ be a neighborhood of point $(t_0, y_0)$ in $T\times Y$, and let $\Phi$ be a mapping from $V$ into $Y$. If there exist a neighborhood $U$ of the point $t_0$ in $T$, a number $\beta>0$, and a number $\theta$, $0<\theta<1,$ such that 
  %  \begin{itemize}
   %     \item $(t,\Phi (t,y)) \in V$,
    %    \item $\|\Phi(t, \Phi(t,y))-\Phi(t,y)\|\le \theta \|\Phi(t,y)-y\|$,
     %   \item $\|\Phi(t,y_0)-y_0\|\le \beta (1-\theta),$
    %\end{itemize}
    %for $t\in U$, and $\|y-y_0\| < \beta$, 
    %then the sequence $(y_n(t))_{n\in \mathbb{N}}$ defined by the recurrence relations 
    %$$y_0(t)=y_0,\  y_n(t)=\Phi(t, y_{n-1}(t))$$
%is contained in the ball $B(y_0,\beta)=\{y | \|y-y_0\|< \beta \}$ for any $t\in U$ and is convergent to a mapping $t\rightarrow \phi(t)$, uniformly with respect to $t\in U$, and, moreover, 
%$$\|\phi(t) - y_0\| \le \frac{\Phi(t,y_0)-y_0}{1-\theta}.$$
%\end{lemma}
%\end{comment}

\begin{theorem}[Generalized Lyusternik Theorem, \cite{Tr83}]
\label{5th1}
Let $X$ and $Y$ be Banach spaces, and
   $U$ be a neighborhood of a point $\bar{x} \in X$. Assume that
   $F: X \to Y$ is a $p$--times continuously Fr\'echet differentiable mapping
   in $U$, it is $p$--regular at $\bar{x}$, and $$F^{(r)}(0)=0, \quad r=0,1,
   \ldots, p-1.$$

   Then
     $$
        T_1 M(\bar{x}) =  H_p(\bar{x}),
     $$
     where the set $H_p(\bar{x})$ is defined in \eqref{Hp}.
\end{theorem}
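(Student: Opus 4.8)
The plan is to prove the set equality $T_1 M(\bar{x}) = H_p(\bar{x})$ by establishing the two inclusions separately, first exploiting that the hypothesis $F^{(r)}(\bar{x}) = 0$ for $r = 0,1,\ldots,p-1$ places us in the \emph{completely degenerate case}, where the constructions simplify drastically. With all lower-order derivatives vanishing, the decomposition \eqref{Y=sum} collapses: $Y_1 = \operatorname{cl}\operatorname{Im}F'(\bar{x}) = \{0\}$, inductively $Y_i = \{0\}$ for $i = 2,\ldots,p-1$, and $Y_p = Y$. Hence $f_p = F$ while $f_i \equiv 0$ for $i < p$, so the $p$-factor operator \eqref{4eq3} reduces to $\Psi_p(h) = F^{(p)}(\bar{x})[h]^{p-1}$, and the set $H_p(\bar{x})$ of \eqref{Hp} becomes $\operatorname{Ker}^p F^{(p)}(\bar{x}) \setminus \{0\} = \{h \neq 0 \mid F^{(p)}(\bar{x})[h]^p = 0\}$, since $\operatorname{Ker}^i f_i^{(i)}(\bar{x}) = X$ for $i < p$. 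Moreover $M(\bar{x}) = \{x \mid F(x) = 0\}$ because $F(\bar{x}) = 0$. Since $0$ is trivially tangent, it suffices to match nonzero tangent vectors with $H_p(\bar{x})$.

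For the inclusion $T_1 M(\bar{x}) \subseteq H_p(\bar{x})$, which I expect to be routine, I would take a nonzero tangent vector $h$, so by Definition \ref{Def:TC} there is a curve $x(t) = \bar{x} + th + r(t) \in M(\bar{x})$ with $\|r(t)\| = o(t)$. Writing the $p$th-order Taylor expansion of $F$ at $\bar{x}$ and discarding the terms of order $< p$ (all of which vanish) gives
$$0 = F(x(t)) = \tfrac{1}{p!}\,F^{(p)}(\bar{x})[th + r(t)]^p + o(\|th + r(t)\|^p).$$
Expanding the symmetric $p$-form multilinearly and using $\|r(t)\| = o(t)$, the dominant contribution is $\tfrac{t^p}{p!}F^{(p)}(\bar{x})[h]^p$; dividing by $t^p$ and letting $t \to 0^+$ forces $F^{(p)}(\bar{x})[h]^p = 0$, i.e. $h \in H_p(\bar{x})$.

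The substantive direction is $H_p(\bar{x}) \subseteq T_1 M(\bar{x})$, where $p$-regularity enters decisively. Fix $h \in H_p(\bar{x})$; by Definition \ref{4def3} the operator $\Psi_p(h) = F^{(p)}(\bar{x})[h]^{p-1}$ is surjective, so by the Banach open mapping principle (Theorem \ref{banach}) it admits a bounded right inverse in the sense of \eqref{4eq5}--\eqref{4eq6}, with associated constant $\kappa$. For each small $t > 0$ I would seek $r = r(t)$ solving $F(\bar{x} + th + r) = 0$ through the rescaling $r = t\,w$, which normalizes the leading operator. Substituting into the Taylor expansion, isolating the term linear in $r$ (namely $\tfrac{1}{(p-1)!}t^{p-1}\Psi_p(h)\,r$, the $F^{(p)}(\bar{x})[h]^p$ term again vanishing), and dividing by $t^p$, the equation $F(\bar{x} + th + tw) = 0$ becomes equivalent to
$$\tfrac{1}{(p-1)!}\,\Psi_p(h)\,w + \Theta_t(w) = 0,$$
where $\Theta_t(w)$ collects the terms $F^{(p)}(\bar{x})[h]^{p-j}[w]^j$ with $j \ge 2$ together with the rescaled remainder $t^{-p}\,o(\|t(h+w)\|^p)$. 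The crucial scaling observation is that $\Theta_t(w) \to 0$ uniformly on bounded sets as $t \to 0$, that $\Theta_t(0) \to 0$, and that the derivative $\Theta_t'(w) = O(\|w\|)$ is small near $w = 0$ uniformly in small $t$. Consequently I would apply the Graves theorem (Theorem \ref{th:graves}) with $A = \tfrac{1}{(p-1)!}\Psi_p(h)$, whose surjectivity supplies the constant $\kappa$: since the nonlinear part is a small-Lipschitz perturbation with $\delta < \kappa^{-1}$ for $t$ small, one solves for $w = w(t)$ with $\|w(t)\| \to 0$. Setting $r(t) = t\,w(t)$ then yields $x(t) = \bar{x} + th + r(t) \in M(\bar{x})$ with $\|r(t)\|/t = \|w(t)\| \to 0$, proving that $h$ is tangent.

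The hard part will be the uniform control in the rescaled variable: one must verify, from the multilinear expansion and the $C^p$ Taylor remainder, that the perturbation $\Theta_t$ genuinely has Lipschitz modulus below $\kappa^{-1}$ on a fixed ball around $w=0$ for all sufficiently small $t$, so that Graves' hypothesis \eqref{eq:graves} holds. The only subtlety beyond this is the set-valuedness of the right inverse, which I would circumvent by running the argument through the fixed surjective operator $A = \tfrac{1}{(p-1)!}\Psi_p(h)$ in Graves' theorem (rather than through an explicit selection of $\{\Psi_p(h)\}^{-1}$), so that no complementedness of $\operatorname{Ker}\Psi_p(h)$ need be assumed. Combining the two inclusions gives $T_1 M(\bar{x}) = H_p(\bar{x})$ as claimed.
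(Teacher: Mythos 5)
Your proposal is correct, and the comparison with the paper is instructive. For the inclusion $T_1 M(\bar{x}) \subseteq H_p(\bar{x})$ your argument is essentially the paper's own proof: reduce to the completely degenerate situation (where $Y_1=\dots=Y_{p-1}=\{0\}$, $f_p=F$, $\Psi_p(h)=F^{(p)}(\bar{x})[h]^{p-1}$, and $H_p(\bar{x})=\operatorname{Ker}^p F^{(p)}(\bar{x})\setminus\{0\}$), Taylor-expand $F(\bar{x}+th+r(t))=0$, expand the $p$-form binomially, divide by $t^p$, and pass to the limit. The real difference is in the converse inclusion $H_p(\bar{x}) \subseteq T_1 M(\bar{x})$: the paper does not prove it at all, deferring to the Multi-valued Contraction Mapping Principle (Lemma 1 of \cite{theoryofetremal}) and to \cite{Tr83}, whereas you sketch a self-contained argument via Graves' theorem (Theorem \ref{th:graves}) after the rescaling $r=tw$. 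Your scheme does go through: the terms $F^{(p)}(\bar{x})[h]^{p-j}[w]^j$ with $j\ge 2$ contribute Lipschitz modulus $O(\rho)$ on a ball $B(0,\rho)$; the rescaled remainder $t^{-p}R(t(h+w))$ has Lipschitz modulus $o(1)$ as $t\to 0$, because applying Taylor's theorem to $F'\in C^{p-1}$ (whose derivatives at $\bar{x}$ up to order $p-2$ also vanish) gives $\|R'(u)\|=o(\|u\|^{p-1})$; and $\Theta_t(0)=t^{-p}R(th)\to 0$, so the shifted map satisfies \eqref{eq:graves} with $\delta<\kappa^{-1}$ for small $\rho$ and $t$. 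What your packaging buys is exactly what you claim: running the perturbation through the fixed surjective operator $A=\tfrac{1}{(p-1)!}\Psi_p(h)$ avoids any selection of the set-valued right inverse $\{\Psi_p(h)\}^{-1}$ and any complementedness of $\operatorname{Ker}\Psi_p(h)$, which is precisely the technical reason the original literature invokes multivalued contraction machinery; the price is the uniform-in-$t$ Lipschitz verification, which you correctly isolate as the crux. Two small points to tighten: the stated equality holds only up to the vector $0$ (tangent, but excluded from $H_p(\bar{x})$ by \eqref{Hp}), which you handle and the paper glosses over; and to conclude $\|w(t)\|\to 0$ you must apply Graves on balls of radius $\rho_k\downarrow 0$ with a diagonal choice in $t$, since a single fixed ball only yields boundedness of $w(t)$, not convergence.
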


The problem of description of the tangent cone to the solution set $M(\bar{x})$ of a nonlinear equation with a singular mapping $F$ has also been studied in other papers (see, for example, \cite{BeTr17,BrTr07,LeSch98,Tr84}).
Below we present the proof of the inclusion $$T_1M(\bar{x})\subset  H_p(\bar{x})=\Ker   F^{(p)}(\bar{x}).$$ 
The proof of the inverse inclusion is based on the Multi-valued Contraction Mapping Principle (see Lemma 1, \cite{theoryofetremal}) and can be found in \cite{Tr83}.
\begin{proof}
    Without loss of generality, we assume that $\bar{x}=0$ and $F(\bar{x})=0.$
    Consider~$x\in T_1M(0)$. We will show that $x\in  \Ker   F^{(p)}(0).$ Since $x \in T_1M(0)$, by Definition \ref{Def:TC}, we have $tx+r(t)\in M(0)$, where $\|r(t)\|/t \rightarrow 0$ as $t\rightarrow 0$, and
    \begin{equation}
    \label{lyu-1}
    F(tx+r(t))=F(0).
    \end{equation}
    On the other hand,
    \begin{equation}
    \label{lyu-2}
        F(tx+r(t))=F(0)+\frac{F^{(p)}(0)}{p!}[tx+r(t)]^p+\omega(t) \quad \mbox{and} \quad \frac{\|\omega(t)\|}{t^p} \rightarrow 0 \quad \mbox{as} \quad t\rightarrow 0.
    \end{equation}
    Comparing right-hand sides of \eqref{lyu-1} and \eqref{lyu-2}, we have
    \begin{equation*}
        F(0)=F(0)+\frac{F^{(p)}(0)}{p!}[tx+r(t)]^p+\omega(t)=0.
    \end{equation*}
    Dividing by $t^p$, we obtain
    $$
\frac{F^{(p)}(0)}{p!}\left[x+\frac{r(t)}{t}\right]^p+\frac{\omega(t)}{t^p}=0.
    $$
    By the Binomial theorem and using the fact that $F^{(p)}(0)[x_1, \ldots, x_p]$ is a symmetric multilinear form, we get
    \begin{equation}
        \label{lyu-3}
        F^{(p)}(0) [x]^p + \binom{p}{1} F^{(p)}(0)\left[ \frac{r(t)}{t}, \underbrace{x,\ldots, x}_{p-1} \right] +\ldots+ F^{(p)}(0) \left[ \frac{r(t)}{t}\right]^p + \frac{\omega(t)}{t^p}p!=0.
    \end{equation}
Since $F^{(p)}(0)$ is continuous, $|r(t)\|/t \rightarrow 0$ and $\dfrac{\|\omega(t)\|}{t^p} \rightarrow 0$ as  $t\rightarrow 0$, we get 
 $$F^{(p)}(0)[x]^p=0.$$ Therefore, $x\in \Ker  F^{(p)}(0)$ and  $T_1M(0)\subset \Ker  F^{(p)}(0).$
\end{proof}
%Now let $h\in \Ker  F^{(p)}(0)$ i.e., $F^{(p)}(0)[h]^p=0$. It can be assumed without loss of generality that $\|h\|=1$. Put $x_\alpha= \alpha h, \alpha>0$. Let us put $U_r(x)=\{y\in X | \|y-x\|\le r\}$ 
 %and let $h(A,B)$ be the Hausdorff distance between sets $A$ and $B$. 

\begin{example}
\label{5ex4}
To illustrate the statement of Theorem \ref{5th1}, 
let mapping $F$ be defined by
\begin{equation} \label{F=exx}
 F(x)=
\left(
\begin{array}{c}
  x_1^2-x_2^2+x_3^2 \\
   x_1^2-x_2^2+x_3^2+x_2x_3 \\
\end{array}
\right).
\end{equation}
Consider $\bar{x}=(0,0,0)^T.$ It is easy to verify that $F'(\bar{x})=0$,
$$F''(\bar{x})= \left(
\begin{array} {c} \left(
                  \begin{array}{ccc}
 2 & 0& 0 \\
 0 & -2 & 0\\
 0 & 0 & 2
                  \end{array} \right)\\
                  \\
\left(
\begin{array} {ccc}
 2 & 0& 0 \\
 0 & -2 & 1\\
 0 & 1 & 2
\end{array}
\right)
                  \end{array} \right).
                  $$
Also,
%$$\Ker^2 F''(\bar{x})=\operatorname{span} \{(1,-1,0), (1,1,0)\}.$$
 $$
  \operatorname{Ker}^2 F''(x^*)=\operatorname{span}\left( \left[ \begin{array}{c} 1\\ -1 \\0 \end{array} \right] \right) \bigcup \operatorname{span}\left( \left[ \begin{array}{c} 1\\ 1 \\0 \end{array} \right] \right).$$

 Let $h=(1,1,0)^T$ (or $h=(1,-1,0)^T$), then $\operatorname{Im}  F''(\bar{x})h=\mathbb{R}^2.$
Hence, the mapping $F(x)$ is $2$-regular at $\bar{x}=0$.
 Then  the statement of Theorem \ref{5th1} in this example reduces to  $$T_1M(\bar{x})=H_2(\bar{x})=\Ker^2   F^{\prime \prime}(\bar{x})$$ 
or
$$T_1M(\bar{x}) = \Ker^2 F''(\bar{x})=\operatorname{span}\left( \left[ \begin{array}{c} 1\\ -1 \\0 \end{array} \right] \right) \bigcup \operatorname{span}\left( \left[ \begin{array}{c} 1\\ 1 \\0 \end{array} \right] \right).$$
\end{example}

Next theorem presents another version of \ref{5th1}, which was formulated in \cite{Tr87}. (See also \cite{BMS} and \cite{IzTr94}
for additional results along these lines.) 
To state the result, we  denote by $\hbox{dist}(x,M)$, the \emph{distance function} from
a point $x \in X$ to a set $M$:
\vskip-6pt
$$\hbox{dist}(x,M)=\inf_{y\in M}\|x-y\|, \quad x\in X.$$

\begin{theorem}[\cite{PrTre16}]
\label{5th3}
Let $X$ and $Y$ be Banach spaces, and $U$ be a neighborhood of a point $\bar{x}\in X$. Assume that $F:X\rightarrow Y$ is a $p$-times continuously Fr\'{e}chet differentiable mapping in $U$ and satisfies the condition of strong $p$-regularity at $\bar{x}$. Then there
exist a neighborhood $U'\subseteq U$ of $\bar{x}$, a mapping $\xi\rightarrow x(\xi):U'\rightarrow X$, and constants $\delta_1>0$ and $\delta_2>0$ such that
$ F(\xi+x(\xi))=F(\bar{x}),$
\begin{equation}
\label{5eq1}
{\rm dist} (\xi, M(\bar{x})) \leq   \|x(\xi)\|_X \leq \delta_1 \sum_{i=1}^p\dfrac{\|f_i(\xi)-f_i(\bar{x})\|_{Y_i}}{\|\xi-\bar{x}\|^{i-1}}
\end{equation}
and
$$ {\rm dist} (\xi, M(\bar{x})) \leq  \|x(\xi)\|_X \leq \delta_2 \sum_{i=1}^p\|f_i(\xi)-f_i(\bar{x}) \|^{1/i}_{Y_i}
   $$
 for all $\xi\in U'$.
\end{theorem}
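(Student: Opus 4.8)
The plan is to establish \eqref{5eq1} by a scaled $p$-factor (modified Newton) iteration combined with the Multi-valued Contraction Mapping Principle (Lemma~1 of \cite{theoryofetremal}), the same tool cited for the inverse inclusion in Theorem \ref{5th1}; the lower bounds and the second chain of inequalities will then come almost for free. First I would translate so that $\bar{x}=0$ and $F(\bar{x})=0$, whence $f_i(\bar{x})=0$ and the quantity to be controlled is just $f_i(\xi)$. Fix $\xi$ near $0$, put $t=\|\xi\|_X$ and $\hat{h}=\xi/t$, and introduce the scaled map $\mathcal{F}_t:X\to Y$ whose $i$-th block in $Y_i$ is $t^{-(i-1)}f_i(\xi+x)$. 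Solving $F(\xi+x)=0$ is equivalent to solving $\mathcal{F}_t(x)=0$, and the scaling does two things at once: $\mathcal{F}_t(0)$ has $i$-th block $t^{-(i-1)}f_i(\xi)$, so that $\|\mathcal{F}_t(0)\|=\sum_{i=1}^p t^{-(i-1)}\|f_i(\xi)\|_{Y_i}$ is exactly the right-hand side of \eqref{5eq1}; and, since each $f_i$ is completely degenerate up to order $i-1$ (so $f_i^{(k)}(\bar{x})=0$ for $k<i$ by \eqref{image_projection}), a Taylor expansion at $\xi=t\hat{h}$ gives $\mathcal{F}_t'(0)=\Psi_p(\hat{h})+O(t)$ up to the fixed block-wise scalar factors $1/(i-1)!$. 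Thus the linearization of the scaled problem at the starting point is, up to uniformly bounded constants and an $O(t)$ error, the $p$-factor operator $\Psi_p(\hat{h})$.

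Next I would invoke strong $p$-regularity: for $\hat{h}\in H_{\alpha}$ the right inverse $\{\Psi_p(\hat{h})\}^{-1}$ exists and is bounded by a constant $C$ independent of $\xi$. Starting from $x_0=0$ I would run the set-valued iteration $x_{k+1}\in x_k-\{\Psi_p(\hat{h})\}^{-1}\mathcal{F}_t(x_k)$ with near-minimal-norm selections, so that already the first correction obeys $\|x_1\|\le C\|\mathcal{F}_t(0)\|$, which is the desired upper bound with $\delta_1\sim C$. Convergence $x_k\to x(\xi)$ with $\mathcal{F}_t(x(\xi))=0$, hence $F(\xi+x(\xi))=F(\bar{x})$, would follow from the Multi-valued Contraction Mapping Principle once the residual map is shown to be a contraction on a small ball; telescoping $\|x(\xi)\|\le\sum_k\|x_{k+1}-x_k\|$ then upgrades the first-step estimate to the bound for the limit.

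The hard part is precisely this contraction estimate. In the singular case the correction $x(\xi)$ is genuinely of order $t=\|\xi-\bar{x}\|$ (as already in Example \ref{Ex:L}), so $\|x\|/t$ is $O(1)$, and the factor $t^{-(i-1)}$ amplifies the linearization error: one computes $t^{-(i-1)}\big(f_i'(\xi+x)-f_i'(\xi)\big)=O(t^{-1}\|x\|)$, so the Lipschitz deviation of $\mathcal{F}_t$ from its linearization on a ball of radius $\rho$ is of order $t+\rho/t$. To close the contraction one must keep $\hat{h}\in H_{\alpha}$, where the uniform bound $C$ applies \emph{and} the starting residual is small, $t^{-(i-1)}\|f_i(\xi)\|_{Y_i}=O(\alpha t)$; then $\|x_1\|=O(C\alpha t)$, one may take $\rho=O(C\alpha t)$, and the product of $C$ with the resulting Lipschitz constant is of order $C(t+C\alpha)$, which is $<1$ for $t$ small provided $\alpha$ is fixed small enough relative to $C$. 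This balancing of the right-inverse bound against the aperture $\alpha$ of the admissible cone is exactly what strong $p$-regularity, rather than $p$-regularity along a single direction, is designed to supply; I would expect the one genuinely delicate point to be the treatment of directions $\hat{h}\notin H_{\alpha}$, which seems to require shrinking $U'$ to the corresponding cone or a separate continuity argument.

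Finally, the two remaining assertions are routine. The lower bound $\mathrm{dist}(\xi,M(\bar{x}))\le\|x(\xi)\|$ is immediate because $\xi+x(\xi)\in M(\bar{x})$. The second chain of inequalities follows from the first by the elementary comparison $t^{-(i-1)}\|f_i(\xi)\|_{Y_i}\le c^{(i-1)/i}\|f_i(\xi)\|_{Y_i}^{1/i}$, which is valid since the same Taylor estimate gives $\|f_i(\xi)\|_{Y_i}\le c\,t^{i}$, so summing over $i$ produces the constant $\delta_2$.
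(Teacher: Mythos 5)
Your route --- rescale the residual blockwise by $t^{-(i-1)}$ with $t=\|\xi-\bar{x}\|$, use strong $p$-regularity to get a uniformly bounded right inverse of $\Psi_p(\hat{h})$, and close with the multivalued contraction principle --- is exactly the standard one behind Theorem \ref{5th3} (the paper itself gives no proof, deferring to \cite{IzTr94} and \cite{PrTre16}), and your handling of directions $\hat{h}=(\xi-\bar{x})/\|\xi-\bar{x}\|\in H_{\alpha}$ is sound in outline: the first-step estimate $\|x_1\|\le C\|\mathcal{F}_t(0)\|$ gives the right-hand side of \eqref{5eq1}, shrinking $\alpha$ to beat $C$ is legitimate since $H_{\alpha'}\subseteq H_{\alpha}$ for $\alpha'<\alpha$ preserves the uniform bound, and the lower bound and the passage from the first chain of inequalities to the second are correct.

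The genuine gap is the case $\hat{h}\notin H_{\alpha}$, which you flag but leave unresolved, and neither of your suggested fixes can work: the theorem asserts the estimates for \emph{every} $\xi$ in a full neighborhood $U'$ of $\bar{x}$, so shrinking $U'$ to the cone over $H_{\alpha}$ proves a strictly weaker statement, and no continuity argument can extend the bound on $\|\{\Psi_p(\hat{h})\}^{-1}\|$ beyond $H_{\alpha}$, since off that set $\Psi_p(\hat{h})$ need not even be surjective. The missing idea is a dichotomy, not an extension. If $\hat{h}\notin H_{\alpha}$, there is an index $i_0$ with $\|f_{i_0}^{(i_0)}(\bar{x})[\hat{h}]^{i_0}\|_{Y_{i_0}}>\alpha$, and Taylor's formula (using $f_{i_0}^{(k)}(\bar{x})=0$ for $k<i_0$, with remainder uniform over directions on the unit sphere) gives, for $t$ small,
\begin{equation*}
\sum_{i=1}^p\frac{\|f_i(\xi)-f_i(\bar{x})\|_{Y_i}}{t^{\,i-1}}\;\ge\;\frac{\|f_{i_0}(\xi)-f_{i_0}(\bar{x})\|_{Y_{i_0}}}{t^{\,i_0-1}}\;\ge\;\frac{\alpha}{2\,p!}\,t .
\end{equation*}
So for such $\xi$ the right-hand side of \eqref{5eq1} is already of order $\|\xi-\bar{x}\|$, and the trivial correction $x(\xi)=\bar{x}-\xi$ (so that $\xi+x(\xi)=\bar{x}\in M(\bar{x})$, hence $F(\xi+x(\xi))=F(\bar{x})$ and $\mathrm{dist}(\xi,M(\bar{x}))\le\|x(\xi)\|$ trivially) satisfies $\|x(\xi)\|=t\le\bigl(2\,p!/\alpha\bigr)\sum_{i=1}^p t^{-(i-1)}\|f_i(\xi)-f_i(\bar{x})\|_{Y_i}$; your comparison argument then yields the second chain in this case as well. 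Taking $\delta_1$ to be the larger of $2\,p!/\alpha$ and the constant from your contraction step completes the proof on all of $U'$. Without this case analysis, your proposal establishes only the restriction of the theorem to the cone over $H_{\alpha}$, not the theorem as stated.
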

For the proof, see \cite{IzTr94} and \cite{PrTre16}.

%%%%%%%%% Poland

%In the next two sections we consider Representation Theorem and Morse Lemma.

\subsubsection{Representation theorem}
\label{sub5.2}
The Representation theorem is used in nonlinear analysis 
and is relevant to the study of local behavior and representation of a mapping $F$ around some special point $\bar{x}$.
The theorem also guarantees the existence of some auxiliary mappings that have desirable properties and relate to the given mapping $F$ and its local representation in some neighborhood of $\bar{x}$.

The Representation theorem can be used, for example, in the study of optimization problems, particularly in constrained optimization. It helps in establishing the existence of critical points and characterizing their properties, which is crucial in finding optimal solutions.
The theorem is also relevant to variational methods, 
partial differential equations, and other areas.
The theorem is useful in various numerical methods and computational techniques for approximating solutions of equations.
Its versatility and usefulness stem from its ability to provide insights into the local behavior and representations of mappings around critical points, which has wide-ranging applications in mathematical analysis and optimization.

\begin{theorem}[\cite{TrMa03}]
\label{5th5}
Let $X$ and $Y$ be Banach spaces, and $V$ be a neighborhood of $\bar{x}$ in $X$. Suppose  $F:V\rightarrow Y$ is of class $\mathcal{C}^{p+1}$ and $F^{(i)}(\bar{x})=0,$ $i=1,\ldots,p-1$. Suppose further that for a constant $C>0,$
$$\sup_{\|h\|=1}\left\|\{F^{(p)}(\bar{x})[h]^{p-1}\}^{-1}\right\|\leq C.$$
Then there exist a neighborhood $U$ of $0$ in $X$, a neighborhood $V$ of $\bar{x}$ in $X$, and mappings $\varphi:U\rightarrow X$ and $\psi:V\rightarrow X$ such that $\varphi$ and $\psi$ are Fr\'{e}chet-differentiable at $0$ and $\bar{x}$, respectively, and
\begin{enumerate}
  \item $\varphi(0)=\bar{x},$ $\psi(\bar{x})=0$;
  \item $F(\varphi(x))=F(\bar{x})+\dfrac{1}{p!}F^{(p)}(\bar{x})[x]^p$ for all $x\in U$;
  \item $F(x)=F(\bar{x})+\dfrac{1}{p!}F^{(p)}(\bar{x})[\psi(x)]^p$ for all $x\in V$;
  \item $\varphi'(0)=\psi'(\bar{x})=I_X$.
\end{enumerate}
\end{theorem}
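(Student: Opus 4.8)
The plan is to realize $\varphi$ and $\psi$ as small nonlinear perturbations of the identity, determined by a contraction argument whose engine is the uniformly bounded right inverse of the $p$-factor operator $F^{(p)}(\bar{x})[h]^{p-1}$. The first step is a Taylor expansion: since $F^{(i)}(\bar{x})=0$ for $i=1,\dots,p-1$ and $F\in\mathcal{C}^{p+1}$, for $u$ near $0$ one has $F(\bar{x}+u)=F(\bar{x})+\frac{1}{p!}F^{(p)}(\bar{x})[u]^p+\omega(u)$ with $\|\omega(u)\|=O(\|u\|^{p+1})$ and $\omega^{(i)}(0)=0$ for $i\le p$. This isolates the homogeneous degree-$p$ leading term, which is precisely the right-hand side appearing in conclusions (2) and (3).

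For $\varphi$ I would seek $\varphi(x)=\bar{x}+x+w(x)$ with $w(0)=0$. Substituting into (2) and using the expansion, conclusion (2) becomes $\frac{1}{p!}F^{(p)}(\bar{x})[x+w]^p+\omega(x+w)=\frac{1}{p!}F^{(p)}(\bar{x})[x]^p$. Expanding $[x+w]^p$ by multilinearity and symmetry, the $k=0$ term cancels the right-hand side and the $k=1$ term equals $\frac{1}{(p-1)!}\bigl(F^{(p)}(\bar{x})[x]^{p-1}\bigr)w$, so that $w$ must satisfy
\begin{equation*}
\bigl(F^{(p)}(\bar{x})[x]^{p-1}\bigr)w \;=\; N(w,x),
\end{equation*}
where $N(w,x)$ gathers the terms of order at least two in $w$ together with the remainder, namely $N(w,x)=-\frac1p\sum_{k=2}^{p}\binom{p}{k}F^{(p)}(\bar{x})[x]^{p-k}[w]^{k}-(p-1)!\,\omega(x+w)$, the symbol $[x]^{p-k}[w]^{k}$ denoting the symmetric $p$-form with $p-k$ arguments equal to $x$ and $k$ equal to $w$.

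The core step is a contraction argument. Writing $x=\|x\|h$ with $\|h\|=1$, the operator $F^{(p)}(\bar{x})[x]^{p-1}=\|x\|^{p-1}F^{(p)}(\bar{x})[h]^{p-1}$ admits, by the strong-$p$-regularity hypothesis, a right inverse of norm at most $C\|x\|^{-(p-1)}$, \emph{uniformly} in the direction $h$. Fixing a Lipschitzian selection of this right inverse --- or invoking the multivalued contraction mapping principle (Lemma 1 of \cite{theoryofetremal}) already used above --- I would solve $w=G_x(w):=\{F^{(p)}(\bar{x})[x]^{p-1}\}^{-1}N(w,x)$ on the ball $\{\|w\|\le\gamma\|x\|^2\}$. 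The decisive scaling estimate is that $\|N(w,x)\|=O(\|x\|^{p+1})$ there (the $\omega$-remainder dominates, the sums over $k\ge2$ being of order $\|x\|^{p+2}$ and smaller), so that $\|G_x(w)\|\le C\|x\|^{-(p-1)}\cdot O(\|x\|^{p+1})=O(\|x\|^2)$, which stays inside the ball for $\gamma$ large and $\|x\|$ small. A parallel bound $\|N(w_1,x)-N(w_2,x)\|=O(\|x\|^{p}\|w_1-w_2\|)$ yields $\|G_x(w_1)-G_x(w_2)\|\le O(\|x\|)\|w_1-w_2\|$, a genuine contraction once $\|x\|$ is small. This produces a unique $w(x)$ with $\|w(x)\|=O(\|x\|^2)$; setting $w(0)=0$ gives $\varphi(0)=\bar{x}$, conclusion (2), and, since $\|w(x)\|=o(\|x\|)$, Fréchet differentiability at $0$ with $\varphi'(0)=I_X$. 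The map $\psi$ is then produced by the mirror argument: writing $x=\bar{x}+u$ and $\psi(\bar{x}+u)=u+v(u)$, conclusion (3) becomes $\frac{1}{p!}F^{(p)}(\bar{x})[u+v]^p=\frac{1}{p!}F^{(p)}(\bar{x})[u]^p+\omega(u)$, the same fixed-point equation for $v$, solved by the identical scheme to give $v(u)=O(\|u\|^2)$, hence $\psi(\bar{x})=0$ and $\psi'(\bar{x})=I_X$.

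I anticipate the main obstacle to be precisely the blow-up of the right-inverse norm like $\|x\|^{-(p-1)}$ as $x\to0$, together with the set-valuedness of the right inverse. The two features that tame it are the uniform bound $C$ from strong $p$-regularity, which makes every estimate uniform in the direction $x/\|x\|$ and thus lets the homogeneity bookkeeping close, and the $\mathcal{C}^{p+1}$ smoothness, which furnishes a remainder of order $\|x\|^{p+1}$ --- exactly one power better than the compensating $\|x\|^{-(p-1)}$ --- yielding the $O(\|x\|^2)$ correction needed for differentiability at the base point. Arranging the selection of the right inverse (or running the multivalued fixed-point theorem) so that $w$ and $v$ are bona fide mappings carrying the stated norm bounds is the remaining technical point.
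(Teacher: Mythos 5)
Your argument is correct, and the homogeneity bookkeeping at its core is exactly what makes the theorem true: the nonlinear residual $N(w,x)$ is of order $\|x\|^{p+1}$ (thanks to $\mathcal{C}^{p+1}$ smoothness and complete degeneracy up to order $p-1$), the right inverse of $F^{(p)}(\bar{x})[x]^{p-1}$ costs $C\|x\|^{-(p-1)}$ (uniformly in the direction, by hypothesis), and the product gives correctors $w(x),v(u)$ of order $\|x\|^{2}$, which is precisely what yields conclusions (2)--(3) together with Fr\'echet differentiability at the base points and $\varphi'(0)=\psi'(\bar{x})=I_X$. However, your route is genuinely different from the paper's. The paper does not run the contraction directly on the representation identity: it introduces the auxiliary mapping $\Phi(\xi)=F(x)-F(\bar{x})+P_1F'(x)[\xi-(x-\bar{x})]+\cdots+\frac{1}{p!}P_pF^{(p)}(x)[\xi-(x-\bar{x})]^p$ and invokes the Generalized Lyusternik Theorem \ref{5th3} (the strong $p$-regularity version, with its corrector mapping $\xi\mapsto x(\xi)$ satisfying $F(\xi+x(\xi))=F(\bar{x})$ and the distance estimates \eqref{5eq1}), obtaining $\varphi(x)=\bar{x}-x+\xi(x)$ and $\psi(x)=\bar{x}-x+\eta(x)$, with full details deferred to \cite{TrMa03}. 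The two proofs share the same engine --- Theorem \ref{5th3} is itself proved via the multivalued contraction mapping principle (Lemma 1 of \cite{theoryofetremal}) that you invoke --- but you inline that engine, while the paper factors it through existing machinery. What each buys: your version is self-contained and quantitative, making visible why $\mathcal{C}^{p+1}$ (one derivative beyond $p$) and the uniform bound over the unit sphere are needed, and giving explicit $O(\|x\|^2)$ estimates; the paper's version is economical and exhibits the Representation Theorem as a corollary of the generalized Lyusternik theory. Finally, the technical point you flag about set-valuedness of the right inverse is real but harmless here: the theorem only asserts differentiability of $\varphi$ and $\psi$ \emph{at} $0$ and $\bar{x}$, so a pointwise choice of fixed point carrying the $O(\|x\|^2)$ norm bound suffices; no continuity or smoothness of the selection away from the base point is required.
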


Note that if we introduce mapping $\Phi(\xi)$,
\begin{align*}
     \Phi(\xi) &= F(x)-F(\bar{x})+P_1F'(x)[\xi-(x-\bar{x})]
  \\
  &+\dfrac{1}{2}P_2F''(x)[\xi-(x-\bar{x})]^2+ \ldots +\dfrac{1}{p!}P_{p}F^{(p)}(x)[\xi-(x-\bar{x})]^p
\end{align*}
 and apply Generalized Lyusternik Theorem \ref{5th3},
we get that there exist mappings $\varphi(x)=\bar{x}-x+\xi(x)$, $\varphi: U\rightarrow X$, and $\psi(x)=\bar{x}-x+\eta(x)$,  $\psi: V\rightarrow X$, such that Representation Theorem  \ref{5th5} holds.

All assumptions of Theorem \ref{5th5} hold, for example, for the mapping 
 $$F(x) = x_1^p - x_2^p + x_1^{p+1} + x_2^{p+1},$$ where $p \geq 2$, $p \in {\mathbb N}$. 
See additional work on  the representation theorem in paper \cite{Iz2000}.

\subsubsection{Morse Lemma}
\label{sub5.3}

The Morse Lemma is another fundamental result 
in analysis, which relates the behavior of a smooth function near some non-degenerate critical point $\bar{x}$ to the local behavior of its level sets. The lemma has several important applications in various areas of mathematics.

The Morse Lemma is used in differential geometry to analyze the behavior of geodesics and study the geometry of manifolds. By considering a function that measures the length or energy of curves on a manifold, the Morse Lemma allows us to understand the critical points of this function and their geometric implications. It provides insights into the existence, stability, and bifurcations of geodesics on a manifold.

The Morse Lemma has applications in optimization and control theory, where it is used to analyze the behavior of objective functions and control systems near critical points. It helps in characterizing the local behavior of optimal solutions and understanding stability properties. The lemma can be employed in finding critical points, performing sensitivity analysis, and studying bifurcations in optimization problems and dynamical systems.

The Morse Lemma is also utilized in singularity theory, which studies the properties and classifications of singular points or critical points of differentiable mappings. It provides a framework for understanding the local behavior of singularities and the possible changes in their structure under small perturbations. The lemma is instrumental in the classification and analysis of singular points and their stability properties.

The most interesting formulation of the Morse Lemma for the finite dimensional case is given in the following lemma.

\begin{lemma}[Morse Lemma]
\label{5lem6}
  Let $\bar{x}\in \mathbb{R}^n$ and $f:\mathbb{R}^n \rightarrow \mathbb{R},$ $f\in \mathcal{C}^3$, be such that $f'(\bar{x})=0$, but $f''(\bar{x})$ is not degenerate. Then in a neighborhood $V$ of $\bar{x}$ there exist a curvelinear coordinate system $(y_1,\ldots,y_n)$ and an integer number $k\in \{0,\ldots,n\}$ such that
  $$f(x)=f(\bar{x})+\sum_{i=1}^ky_i^2-\sum_{i=k+1}^n y_i^2$$
  for all $x\in V$.
\end{lemma}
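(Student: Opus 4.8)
The plan is to derive the lemma from the Representation Theorem \ref{5th5} specialized to $p=2$, followed by a purely linear diagonalization of the resulting constant quadratic form. First I would translate coordinates so that $\bar{x}=0$ and $f(\bar{x})=0$, and set $A=f''(0)$, a symmetric nondegenerate $n\times n$ matrix. To invoke Theorem \ref{5th5} with $F=f$, $Y=\mathbb{R}$ and $p=2$, I must verify the strong $2$-regularity bound $\sup_{\|h\|=1}\|\{f''(0)[h]\}^{-1}\|\le C$. Here $f''(0)[h]$ is the functional $\xi\mapsto (Ah)^{T}\xi$, which is onto $\mathbb{R}$ for every $h\neq 0$ precisely because $A$ is nondegenerate; its minimal-norm right inverse sends $y$ to $y\,Ah/\|Ah\|^{2}$, so $\|\{f''(0)[h]\}^{-1}\|=1/\|Ah\|$, and taking the supremum over $\|h\|=1$ yields $1/\min_{\|h\|=1}\|Ah\|<\infty$. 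Thus Theorem \ref{5th5} applies and produces a map $\psi$ with $\psi(0)=0$, $\psi'(0)=I_X$, and $f(x)=\tfrac{1}{2}f''(0)[\psi(x)]^{2}$ on a neighborhood of $0$.

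It then remains to diagonalize the constant quadratic form. By Sylvester's law of inertia there is an invertible linear map $T$ and an integer $k$, equal to the number of positive eigenvalues of $A$, with $\tfrac{1}{2}f''(0)[Tw]^{2}=\sum_{i=1}^{k}w_i^{2}-\sum_{i=k+1}^{n}w_i^{2}$. Setting $y=T^{-1}\psi(x)$ and relabeling then gives exactly the claimed normal form, with $y=(y_1,\dots,y_n)$ serving as the new coordinates and the sign count $k$ forced by inertia to be independent of all choices.

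The main obstacle is the \emph{regularity} of the coordinate change. As stated, Theorem \ref{5th5} only guarantees that $\psi$ is Fréchet-differentiable at the single point $\bar{x}$, which is weaker than the genuine $\mathcal{C}^1$ diffeomorphism one expects of a curvilinear coordinate system. To secure this, I would run the classical constructive argument in parallel: by Hadamard's lemma, since $f(0)=0$, $f'(0)=0$ and $f\in\mathcal{C}^3$, one writes $f(x)=\sum_{i,j}x_ix_j h_{ij}(x)$ with $h_{ij}\in\mathcal{C}^1$ symmetric and $(h_{ij}(0))=\tfrac{1}{2}A$ nondegenerate. Then I complete squares by induction on the block of remaining variables: after a linear change of the variables $y_r,\dots,y_n$ ensuring $H_{rr}(0)\neq 0$, I introduce $z_r=\sqrt{|H_{rr}(y)|}\,\bigl(y_r+\sum_{j>r}(H_{rj}/H_{rr})\,y_j\bigr)$, verify through the inverse function theorem that this is a local diffeomorphism (it fixes the other coordinates and has $\partial z_r/\partial y_r(0)=\sqrt{|H_{rr}(0)|}\neq 0$), and check that nondegeneracy of the residual $(n-r)\times(n-r)$ block persists at each step.

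The delicate points, and where I expect the real work to lie, are keeping $H_{rr}$ nonzero on a full neighborhood and tracking the regularity loss: because $f$ is only $\mathcal{C}^3$, the coefficients $h_{ij}$ are only $\mathcal{C}^1$ and the factor $\sqrt{|H_{rr}|}$ is $\mathcal{C}^1$ at best, so the resulting chart is $\mathcal{C}^1$ — which is exactly the regularity implicit in the statement. I would therefore present the Representation Theorem route as the conceptual derivation and use the Hadamard completion-of-squares argument to certify that the change $x\mapsto y$ is a genuine $\mathcal{C}^1$ curvilinear coordinate system.
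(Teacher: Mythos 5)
Your proposal follows essentially the same route as the paper's own proof: both reduce the lemma to Representation Theorem \ref{5th5} with $p=2$ and $F=f$, obtaining $f(x)=f(\bar{x})+\tfrac{1}{2}f''(\bar{x})[\psi(x)]^2$ with $\psi(\bar{x})=0$, $\psi'(\bar{x})=I_X$, and then remove the quadratic form by a linear change of variables; the only structural difference is ordering (the paper first puts the Hessian into $\pm 1$ diagonal form and then invokes Theorem \ref{5th5}, you invoke the theorem first and then apply Sylvester's law of inertia), which is immaterial. What you add beyond the paper is genuine value. First, the paper never checks the hypothesis $\sup_{\|h\|=1}\|\{f''(\bar{x})[h]\}^{-1}\|\le C$ of Theorem \ref{5th5}; your computation $\|\{f''(0)[h]\}^{-1}\|=1/\|Ah\|\le 1/\min_{\|h\|=1}\|Ah\|<\infty$, valid exactly because $A$ is nondegenerate, closes that gap. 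Second, and more importantly, you correctly observe that Theorem \ref{5th5} only guarantees Fréchet differentiability of $\psi$ at the single point $\bar{x}$, so $y=\psi(x)$ is not automatically a $\mathcal{C}^1$ curvilinear coordinate system; the paper passes over this silently (its identification $k=k'$ via $\psi(x)=x-\bar{x}+o(\|x-\bar{x}\|)$ also leans on this unproved regularity). Your parallel Hadamard/completion-of-squares argument is precisely what certifies the chart as a genuine local $\mathcal{C}^1$ diffeomorphism, with the correct bookkeeping that $f\in\mathcal{C}^3$ yields only $\mathcal{C}^1$ coefficients and hence a $\mathcal{C}^1$ chart — which matches the regularity implicit in the statement. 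In short: same conceptual derivation as the paper, but your version repairs two real deficiencies of the paper's proof.
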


\begin{proof}
  Without loss of generality, we can assume that Hessian matrix $f''(\bar{x})$ is diagonal:
  $$
  f'' (\bar{x}) = \left( \begin{array} {ccccc}
  1 & 0 & \cdots & 0 & 0\\
  0 & 1 & \cdots & 0 & 0\\
    \vdots & \vdots& \ddots &     \vdots &     \vdots\\
  0 & 0 & \cdots & -1 & 0\\
    0 & 0 & \cdots & 0 & -1\\
  \end{array}
  \right),
  $$
  where for some number  $k'$ between $0$ and $n$, the first $k'$ columns have 1 on the main diagonal, and the other columns have $-1$.
     Otherwise, changing the basis, we can transform the Hessian to be a diagonal matrix.

Then in this case,
    $$f(x)=f(\bar{x})+\sum_{i=1}^{k'}(x_i-x_i^*)^2-\sum_{i=k'+1}^n (x_i-x_i^*)^2+o(\|x-\bar{x}\|^3).$$
Note, that if the assumptions of Morse Lemma  \ref{sub5.3} hold, then the assumptions of the representation \ref{5th5} are satisfied with $p=2$ and $F=f$.
Hence, there exists a mapping $\psi(x):U\rightarrow \mathbb{R}$ such that $$f(\psi(x))=f(\bar{x})+\dfrac{1}{2}f''(\bar{x})[x]^2,$$
 where $\|\psi(x)-(x-\bar{x})\|=o(\|x-\bar{x}\|)$ and $\psi'(\bar{x})=I_X$. It follows that $k=k'$. Note, that if $k=k'$ then $\psi(x)=x-\bar{x}+o(x-\bar{x})$ and if $k\neq k'$ then we get a contradiction.
 Now, we can apply the statement of the representation  \ref{5th5} to the mapping $y=\psi(x)$ to get the statement of Morse Lemma \ref{5lem6}.
\end{proof}

See additional work on Morse Lemma  in \cite{Iz98}.

\subsection{Implicit Function Theorem}
\label{sec:IFT}

In this section, we consider the equation $F (x, y) =0$, where $F : X \times Y \to Z$ and $X$, $Y$ and $Z$ are Banach spaces.
 Let $(\bar{x},  \bar{y})$ be a given point in $X \times Y$ that satisfies $F(\bar{x},\bar{y}) = 0$. 
We are interested in  the problem of existence of a locally defined mapping $\varphi(x)$, which is a solution of the equation $F (x, y) =0$ near the given point $(\bar{x},  \bar{y})$, so that the following holds:
\begin{equation} \label{E12}
F(x, \varphi(x))=0 \quad \mbox{and} \quad \bar{y} = \varphi(\bar{x}).
\end{equation}

\subsubsection{Implicit Function Theorem in the regular case}
\label{sub3.7}

In the case when $F : X \times Y \to Z$ is a continuously differentiable mapping that is regular at the point $(\bar{x}, \bar{y})$, i.e., when $F_y^\prime (\bar{x}, \bar{y})$, the derivative of $F$ with respect to $y$, is onto, the classical implicit function theorem guarantees
 existence of the smooth mapping $\varphi(x)$ defined in a neighborhood $U(\bar{x})$ of $\bar{x}$ such that \eqref{E12} holds and $\|\varphi(x) - \bar{y}\|\leq C \|F(x,\bar{y})\|$ for all $x$ in $U(\bar{x})$, where $C>0.$
There are numerous books and papers devoted to the implicit function theorem, amongst which are \cite{Hamilton, Krantz}. 
There are various statements of the standard implicit function theorem and Theorem \ref{IFTL} is one of them.

\begin{theorem}[Implicit Function Theorem]
\label{IFTL}
Let $X$ and $Y$ be  normed linear spaces.
Assume that
$F : X\times Y \to {\mathbb R}^m$ is continuously Frechet differentiable at $(\bar{x}, \bar{y}) \in X\times Y$, $F(\bar{x}, \bar{y})=0$, and
that
$F^\prime_y(\bar{x}, \bar{y})$ is onto.
Then there exist constants $C, C_1 >0$, a sufficiently small $\delta$,
and a function
$y: B (\bar{x}, \delta) \to  Y$
such that the following holds for $x \in B (\bar{x}, \delta)$:
\begin{equation} \label{IFTE}
	\varphi(\bar{x})=\bar{y}, \quad
	F(x, \varphi(x))=0, \quad
	\|\varphi(x)-\bar{y}\|\leq C_1 \| F(x, \bar{y}) \|\leq C \| x - \bar{x}\|.
\end{equation}
\end{theorem}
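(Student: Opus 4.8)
The plan is to reduce the statement to Graves' theorem (Theorem \ref{th:graves}), applied to the family of maps obtained by freezing the first variable $x$ and viewing $F$ as a function of $y$ alone. The surjectivity of $F'_y(\bar{x},\bar{y})$ is precisely the hypothesis that lets the Banach open mapping principle (Theorem \ref{banach}) supply the constant $\kappa$ that Graves' theorem requires, and the continuity of the derivative will let me verify the Lipschitz-type condition \eqref{eq:graves} uniformly in the parameter $x$.

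First I would record the linear data. Since $A := F'_y(\bar{x},\bar{y}):Y\to\mathbb{R}^m$ is onto, Theorem \ref{banach}(c) yields $\kappa>0$ such that every $z\in\mathbb{R}^m$ equals $Av$ for some $v$ with $\|v\|\le\kappa\|z\|$. For each fixed $x$ near $\bar{x}$ I set $g_x(v):=F(x,\bar{y}+v)-F(x,\bar{y})$, so that $g_x(0)=0$. Next I verify the Graves condition for $g_x$ against $A$, uniformly in $x$: by the mean value inequality,
$$\|g_x(v_1)-g_x(v_2)-A(v_1-v_2)\|\le \sup_{s\in[0,1]}\|F'_y(x,\bar{y}+v_2+s(v_1-v_2))-A\|\,\|v_1-v_2\|,$$
and continuity of $F'_y$ at $(\bar{x},\bar{y})$ makes the supremum smaller than any prescribed $\delta<\kappa^{-1}$ once $x\in B(\bar{x},\delta_0)$ and $v_1,v_2\in B(0,\varepsilon)$ for small $\delta_0,\varepsilon$. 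Graves' theorem then solves $g_x(v)=w$ for every $\|w\|\le c=\kappa^{-1}-\delta$; I want $w=-F(x,\bar{y})$, which lies in this range because $F(\bar{x},\bar{y})=0$ and $F$ is continuous, so $\|F(x,\bar{y})\|\le c$ for $x$ near $\bar{x}$. The resulting $v(x)$ gives $\varphi(x):=\bar{y}+v(x)$, for which $F(x,\varphi(x))=F(x,\bar{y})+g_x(v(x))=0$ and $\varphi(\bar{x})=\bar{y}$. Since $\dim Y$ may exceed $m$, solutions need not be unique, so $\varphi$ is understood as a selection from the solution map.

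The two norm inequalities come last, and the first of them is where the real work lies: the boxed statement of Theorem \ref{th:graves} asserts only existence of a solution in $B(0,\varepsilon)$, not a bound proportional to $\|w\|$. I would obtain $\|\varphi(x)-\bar{y}\|\le C_1\|F(x,\bar{y})\|$ either by invoking the metric-regularity refinement in Lyusternik–Graves (Theorem \ref{Lyusternik-Graves}) or, more self-containedly, from the iterative construction underlying Graves' theorem: starting from $v_0=0$ and repeatedly solving $A(v_{n+1}-v_n)=w-g_x(v_n)$ with $\|v_{n+1}-v_n\|\le\kappa\|w-g_x(v_n)\|$, the Graves inequality forces $\|w-g_x(v_n)\|\le(\kappa\delta)^n\|w\|$, so the increments form a geometric series and $\|v(x)\|\le\frac{\kappa}{1-\kappa\delta}\|w\|=C_1\|F(x,\bar{y})\|$ with $C_1=\kappa/(1-\kappa\delta)$.

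The second inequality is then routine: the mean value inequality and boundedness of $F'_x$ near $(\bar{x},\bar{y})$ give $\|F(x,\bar{y})\|=\|F(x,\bar{y})-F(\bar{x},\bar{y})\|\le L\|x-\bar{x}\|$, whence $C_1\|F(x,\bar{y})\|\le C\|x-\bar{x}\|$ with $C:=C_1L$, completing all three conclusions of \eqref{IFTE}. I expect the chief obstacle to be exactly this quantitative extraction of $C_1$, since it requires looking inside Graves' theorem rather than quoting it as a black box; beyond that, the only care needed is in bookkeeping the neighborhoods so that $\delta_0$ and $\varepsilon$ are mutually compatible and the target $-F(x,\bar{y})$ stays within the solvable range.
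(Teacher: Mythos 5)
The paper itself never proves Theorem \ref{IFTL}: it is quoted as one of several classical formulations of the implicit function theorem, with pointers to \cite{Hamilton, Krantz} and the earlier remark that the implicit function theorem can be deduced from Theorem \ref{Lyusternik-Graves}. So your proposal has to stand on its own. Its skeleton is sound and entirely in the spirit of the paper's toolkit: freezing $x$, applying Graves' theorem to $g_x(v)=F(x,\bar{y}+v)-F(x,\bar{y})$ against the fixed surjective linearization $A=F'_y(\bar{x},\bar{y})$, and extracting the bound on $\|\varphi(x)-\bar{y}\|$ from the Graves iteration rather than from the black-box statement. Your induction $\|w-g_x(v_{n+1})\|\le\kappa\delta\,\|w-g_x(v_n)\|$ is exactly the right computation, it correctly yields $C_1=\kappa/(1-\kappa\delta)$ and forces $\varphi(\bar{x})=\bar{y}$, and the final inequality $\|F(x,\bar{y})\|\le L\|x-\bar{x}\|$ is handled correctly.

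There is, however, one genuine gap: completeness. Theorems \ref{banach} and \ref{th:graves} are stated (and in general are only true) for Banach spaces, whereas Theorem \ref{IFTL} assumes only that $X$ and $Y$ are normed linear spaces. Your iteration produces a Cauchy sequence $(v_n)$ in $Y$, and in an incomplete $Y$ its limit $v(x)$ need not exist, so the step ``Graves' theorem then solves $g_x(v)=w$'' is unjustified as written. The statement is nevertheless true precisely because the target is $\mathbb{R}^m$: choose $u_1,\dots,u_m\in Y$ with $Au_i=e_i$ and perform every linear correction $A(v_{n+1}-v_n)=w-g_x(v_n)$ inside the finite-dimensional subspace $Y_0=\operatorname{span}\{u_1,\dots,u_m\}$. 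This gives the constant $\kappa$ directly (no appeal to Theorem \ref{banach} is needed), all iterates then lie in the complete space $Y_0$, and the rest of your argument, including the bound with $C_1=\kappa/(1-\kappa\delta)$, goes through verbatim. A second, minor point: the constant $c=\kappa^{-1}-\delta$ in the paper's statement of Theorem \ref{th:graves} should scale with $\varepsilon$ (compare \cite{dontchev1996graves}), so if you quote that statement literally you must still verify that the produced solution stays in the ball on which your uniform estimate of $\|F'_y(x,\cdot)-A\|$ holds; your geometric-series bound in fact guarantees this, but it should be said explicitly.
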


The situation changes when the mapping F is degenerate (nonregular) at $(\bar{x}, \bar{y})$; that is, when  $F_y^\prime (\bar{x}, \bar{y})$ is not onto, and, hence, the classical implicit function theorem can not be applied to guarantee the (local) existence of a solution $y(x)$. The importance of consideration of this situation follows from the need of solving various nonlinear problems, many of which, as was shown in \cite{TrMa03}, are, by their nature, singular (degenerate).

\subsubsection{Implicit Function Theorem in the degenerate case}
\label{sub5.8}

In this section, we focus on the case when mapping $F : X \times Y \to Z$  is not regular, that is when $F_y^\prime (\bar{x}, \bar{y})$ is not onto. 

As an example, consider mapping $F:\mathbb{R}\times \mathbb{R}\rightarrow \mathbb{R}$, $F(x,y)=x-y^p$, where $p=2k+1$ with some $k\in \mathbb{N}$. If  $(\bar{x},\bar{y})=(0,0)$ then $F(\bar{x}, \bar{y})=0$ and $F_y^\prime (\bar{x}, \bar{y})=0$,  so that mapping $F$ is not surjective.
The classical implicit function theorem is not applicable in this case. However, there exists  mapping  $\varphi(x)=x^{1/p}$ such that
$F(x,x^{1/p})=x-(x^{1/p})^p=0$ and, moreover, $\|\varphi(x) -  \bar{y}\|= \|F(x,\bar{y})\|^{1/p}$.  Hence, the following inequality holds with $C>0$:  $$\|\varphi(x)- \bar{y}\|\leq C \|F(x,\bar{y})\|^{1/p}.$$
Thus, while the conditions of a standard implicit function are not satisfied in the example, the statement similar to equations \eqref{IFTE} holds. The example serves as a motivation and illustration for the $p$-order implicit function theorems. 
To our knowledge, the first generalization of the implicit function theorem for nonregular mappings was formulated in \cite{Tr87}. Generalizations of the implicit function theorem for 2-regular mappings were obtained in \cite{BrTr07, Iz98, IzTr94, IzTr99}. We will present a few versions of the implicit function theorem for $p$-regular mappings in this section.

To simplify the presentation, we start with Theorem \ref{5th17} that is stated in Euclidean spaces. A slight modification of Theorem \ref{5th17} was derived in \cite{IzTr94} and \cite{Tr-diss}.
To formulate the theorem, we define an operator $ \Psi_{p\, y}(\bar{x}, \bar{y}, h) $ as follows:
$$
    \Psi_{p\, y}(\bar{x}, \bar{y}, h)  = f^\prime_{1\, y} (\bar{x}, \bar{y}) + f^{ \prime \prime}_{2\, yy} (\bar{x}, \bar{y})h + \cdots + f_{p\, y\cdots y}^{(p)} (\bar{x} \bar{y}) [h]^{p-1}.
$$

\begin{remark}
Note that the definition of $ \Psi_{p\, y}(\bar{x}, \bar{y}, h) $ is similar to the definition of the operator  $\Psi_p(h)$ in \eqref{4eq3}.
\end{remark}

\begin{theorem}[Implicit Function Theorem \cite{TrMa03}]
\label{5th17}
Suppose that  $X$, $Y$ and $Z$ are Euclidean
spaces, $W$ is a neighborhood of a point $(\bar{x}, \bar{y})$ in $X\times Y$, and assume that $F : W \rightarrow Z$
is of class $\mathcal{C}^2$. Suppose $F(\bar{x}, \bar{y}) = 0$ and the following conditions hold:
\begin{itemize}
\item[{\rm 1.}]
{\bf  the singularity condition:}
  $${f_i^{(r)}}_{\underbrace{\xi\ldots \xi}_q \underbrace{y\ldots y}_{r-q}}(\bar{x}, \bar{y})=0, \quad r=1,\ldots, i-1, \; q=0,\ldots, r-1, \; i=1,\ldots, p;$$
\item[{\rm 2.}]
{\bf   the $p$-regularity condition} at the point $(\bar{x}, \bar{y})$: there is a neighborhood $U(\bar{x})$ of $\bar{x}$ in $X$ such that
  $$
         \Psi_{p\, y}(\bar{x}, \bar{y}, h)  Y = Z
      $$
for all
      $$		
        h  \in \{ \Psi_{p\, y}(\bar{x}, \bar{y}) \}^{-1}
         (- F(\xi, \bar{y}))
$$
  and all $\xi\in U(\bar{x})$ such that $F(\xi,\bar{y})\neq 0;$
\item[{\rm 3.}]
{\bf  the Banach condition:} for any $z\in Z$, $\|z\|=1$, there exists $y\in Y$ such that
  $$  \Psi_{p\, y}(\bar{x}, \bar{y}, y) y = z,
         \quad \|y\| \leq c,
      $$
  where $c> 0$ is independent of $z$ constant;
\item[{\rm 4.}]
{\bf  the elliptic condition} with respect to the independent variable $\xi$:
  $$\|f_i(\xi,\bar{y})\|_{Z_i}\geq m\|\xi-\bar{x}\|_X^i$$
    for all $\xi\in U$ and for all $i=1,\ldots,p$, where $m>0$ in some number and $U$ is a neighborhood of the point $\bar{x}$ in $X$.
\end{itemize}

Then for any $\varepsilon>0$ there exist $\delta>0$, $K>0$ and a map $\varphi: U(\bar{x}, \delta)\rightarrow U(\bar{y}, \varepsilon)$ such that
\begin{itemize}
\item[{\rm (a)}] $\varphi(\bar{x})=\bar{y}$;
\item[{\rm (b)}] $F(\xi,\varphi(\xi))=0$ for all $\xi\in U(\bar{x}, \delta)$;
\item[{\rm (c)}] $\|\varphi(\xi)-\bar{y}\|_Y\leq K \mathop{\sum}\limits_{i=1}^{p}  \left\|f_i(\xi,\bar{y})\right\|_{Z_i}^{1/i}$
  for all $\xi \in U(\bar{x}, \delta)$.
\end{itemize}
\end{theorem}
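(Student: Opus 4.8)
The plan is to treat the equation $F(\xi,y)=0$ as a parametrized family of $p$-regular equations in the single variable $y$, with $\xi$ playing the role of a parameter, and to solve each member of the family by the $p$-factor machinery uniformly in $\xi$. \textbf{Reduction.} For $\xi$ fixed near $\bar{x}$ I would set $G_\xi(y):=F(\xi,y)$ and seek $y=\varphi(\xi)$ close to $\bar{y}$ with $G_\xi(y)=0$. The singularity condition (condition~1) guarantees that, at $(\bar{x},\bar{y})$, the component mappings $f_i(\xi,\cdot)=P_{Z_i}F(\xi,\cdot)$ are completely degenerate in $y$ up to order $i-1$, so that $G_\xi$ has exactly the degeneracy structure for which the $p$-factor operator $\Psi_{p\,y}(\bar{x},\bar{y},h)$ is the correct linearizing object. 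Conditions~2 and~3 together amount to a parametrized form of strong $p$-regularity (Definition~\ref{5def2}) of $G_\xi$ in $y$ at $\bar{y}$: the $p$-regularity condition provides surjectivity of $\Psi_{p\,y}$ along the admissible directions, while the Banach condition supplies the uniform bound on the right inverse $\{\Psi_{p\,y}(\bar{x},\bar{y},h)\}^{-1}$.

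\textbf{Construction of $\varphi$.} With a uniformly bounded right inverse in hand, I would run a modified $p$-factor iteration
$$
    y_{n+1}=y_n-\{\Psi_{p\,y}(\bar{x},\bar{y},h_n)\}^{-1}\bigl(F(\xi,y_n)\bigr),\qquad y_0=\bar{y},
$$
where the direction $h_n$ is selected from the set $\{\Psi_{p\,y}(\bar{x},\bar{y})\}^{-1}(-F(\xi,\bar{y}))$ dictated by condition~2. Convergence of this scheme I would establish exactly as in the proof of the reverse inclusion in Theorem~\ref{5th1}, namely through the Multi-valued Contraction Mapping Principle (Lemma~1 of \cite{theoryofetremal}): one shows that the associated multivalued map is a contraction because the $p$th-order Taylor remainder of $F(\xi,\cdot)$, after composition with the bounded right inverse, is dominated by a factor strictly less than one on a sufficiently small ball. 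The resulting fixed point is the desired $\varphi(\xi)$, it satisfies $F(\xi,\varphi(\xi))=0$, which is~(b), and at $\xi=\bar{x}$ the vanishing residual $F(\bar{x},\bar{y})=0$ forces $\varphi(\bar{x})=\bar{y}$, which is~(a).

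\textbf{The estimate.} For~(c) I would read off the distance bound from the strong-$p$-regularity estimate of Theorem~\ref{5th3} applied to $G_\xi$: since $f_i(\bar{x},\bar{y})=P_{Z_i}F(\bar{x},\bar{y})=0$, the residual at the base point is precisely $f_i(\xi,\bar{y})$, so the bound $\|\varphi(\xi)-\bar{y}\|\le K\sum_{i=1}^p\|f_i(\xi,\bar{y})\|_{Z_i}^{1/i}$ emerges from the convergence analysis, each component contributing a term of order $\|f_i(\xi,\bar{y})\|_{Z_i}^{1/i}$ because cancelling an $i$th-order residual requires a displacement of size $\|f_i(\xi,\bar{y})\|_{Z_i}^{1/i}$ in $y$. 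The elliptic condition (condition~4) is what keeps the chosen directions $h_n$ genuinely of order one and well separated across components, so that the right-inverse bound of strong $p$-regularity remains valid uniformly along the iteration and across all $\xi$ in a neighborhood; choosing $\delta$ small enough then confines $\varphi$ to the prescribed ball $U(\bar{y},\varepsilon)$.

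\textbf{Main obstacle.} The delicate point is the \emph{uniformity in $\xi$} of the strong $p$-regularity: Definition~\ref{5def2} is a statement at a single point, whereas here the admissible direction set $\{\Psi_{p\,y}(\bar{x},\bar{y})\}^{-1}(-F(\xi,\bar{y}))$ moves with $\xi$, and one must check that the right inverse stays uniformly bounded along these moving directions. Combining the Banach condition with the elliptic lower bound to control the scaling of $h_n$, and thereby pushing the single-point estimate of Theorem~\ref{5th3} through uniformly in the parameter, is the heart of the argument; the remaining contraction estimate is then routine.
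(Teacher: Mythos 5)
The paper itself does not prove Theorem \ref{5th17}: it is stated with a citation to \cite{TrMa03}, and the text only remarks that a slight modification was derived in \cite{IzTr94} and \cite{Tr-diss}. So your proposal can only be measured against the standard $p$-factor proof in those sources and against the techniques this paper uses elsewhere (the multivalued contraction mapping principle of \cite{theoryofetremal}, as in the reverse inclusion of Theorem \ref{5th1}). Your overall architecture --- freeze $\xi$, treat $F(\xi,\cdot)$ as a degenerate equation in $y$, solve it by a contraction argument built on the $p$-factor operator, and read off (c) from the size of the displacement --- is indeed that standard route. But there is a genuine gap in how you set it up. You misread condition 3: the Banach condition concerns the \emph{nonlinear} map $y \mapsto \Psi_{p\,y}(\bar{x},\bar{y},y)\,y$, a polynomial of degree $p$ in $y$, and what it supplies is solvability of the leading-order equation $\Psi_{p\,y}(\bar{x},\bar{y},h)\,h = -F(\xi,\bar{y})$ with a norm bound on $h$ --- exactly the role of condition 3) and Equation \eqref{BanachC} in Theorem \ref{GIFT}. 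It is \emph{not} a uniform bound on the right inverse of the linear operator $\Psi_{p\,y}(\bar{x},\bar{y},h)$; that bound has to be extracted from condition 2 together with the homogeneity scaling recorded in Remark \ref{5rem19}, where the inverse along a direction $\omega h$ degrades like $\omega^{-(p-1)}$. This misassignment breaks your iteration at the first step: starting from $y_0=\bar{y}$, the correction $\{\Psi_{p\,y}(\bar{x},\bar{y},h_0)\}^{-1}\bigl(F(\xi,\bar{y})\bigr)$ has size of order $\|F(\xi,\bar{y})\|/\|h_0\|^{p-1}$, and without first performing the substitution $y=\bar{y}+h(\xi)+z$ --- so that the residual at the shifted base point is of strictly higher order than the leading term along $h(\xi)$ --- the Taylor remainder composed with the degrading right inverse is not dominated by a factor less than one, and no contraction is obtained. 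The standard proof is genuinely two-staged: the Banach condition first produces the displacement $h(\xi)$, whose bound $\|h(\xi)\|\leq c_1\sum_{i}\|f_i(\xi,\bar{y})\|_{Z_i}^{1/i}$ is already the source of estimate (c); only then does the contraction principle, run along the \emph{fixed} direction $h(\xi)$, deliver a higher-order correction.

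The second problem is that you explicitly leave unresolved what you yourself call the heart of the argument: the uniformity in $\xi$ of the right-inverse bound along the moving direction set. This is precisely where condition 4 must be put to work, and it can be: the elliptic bound $\|f_i(\xi,\bar{y})\|_{Z_i}\geq m\|\xi-\bar{x}\|_X^i$, combined with the componentwise form of the leading equation, forces $\|h(\xi)\|$ to be bounded below by a multiple of $\|\xi-\bar{x}\|$, so the $\|h(\xi)\|^{-(p-1)}$ blow-up of the right inverse is exactly compensated by the smallness of the residuals being corrected, uniformly over $\xi$ in a small ball. Flagging this computation as ``the heart of the argument'' and then omitting it leaves the proposal a plan rather than a proof; conversely, once the two-stage structure above is in place, this step is a concrete scaling estimate rather than an open difficulty.
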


Another version of the implicit function theorem for nonregular mapprings, which is presented as Theorem \ref{GIFT}, was proved in  \cite{BTM08}.
To  state the theorem, we define the following mappings (see \cite{Tr87}), which are similar to definitions of mappings $f_i(x)$, given in \eqref{4eq2} for 
operator  $F : X \rightarrow Y$: 
\begin{equation} \label{fi}
   f_i(x, y): X \times Y \to Z_i, \quad
   f_i(x, y) = P_{Z_i} F(x, y), \quad i=1,\,\dots,p,
\end{equation}
where
$P_{Z_i} : Z \to Z_i$ is the projection operator onto
$Z_i$ along
$(Z_1 \oplus \ldots \oplus Z_{i-1} \oplus Z_{i+1} \oplus \ldots \oplus Z_p)$
with respect to $Z$, $i=1,\,\dots,p$.

Let us recall that the mapping $F$ is called {\em uniformly p-regular} over the set $M$ (see Definition 2.3 in \cite{+2007}) if 
$$
\sup\limits_{h\in M} \| \Psi_p(\bar{h}^{-1}  \| <\infty,\ \  \bar{h}=\frac{h}{\|h\|}, h\neq 0,
$$
where $\| \Psi_p(\bar{h}^{-1}  \| =\sup_{\|z\|=1} \inf \{ \Psi(\bar{h})[y]=z \}$.

\begin{theorem}[The $p$th-order Implicit Function Theorem]
\label{GIFT}
Let $X$, $Y$ and $Z$ be Banach spaces, $N(\bar{x})$ and
$N(\bar{y})$ be sufficiently
small neighborhoods of $\bar{x} \in X$ and $\bar{y} \in Y$ respectively,
$F\in C^{p+1}(X\times Y)$, and $F(\bar{x}, \bar{y})=0$. Let
the mappings $f_i(x, y)$, $i=1, \ldots, p$, introduced in equation
$\eqref{fi}$,  satisfy the following conditions:

{\rm 1)} {\bf singularity condition}:
$$
 	\begin{array}{l}
	\!\!\!\!\!\!
	f^{(r)}_{i\;\mbox{\scriptsize $\underbrace{\strut x\dots x}_q\;
	\underbrace{\strut y\dots y}_{r-q}$}}(\bar{x}, \bar{y})=0,
	\quad r=1,\dots,i-1,\ \,q=0,\dots,r-1,\ \,i=1,\dots,p, \\
	\!\!\!\!\!\!
	f^{(i)}_{i\;\mbox{\scriptsize $\underbrace{\strut x\dots x}_q\;
	\underbrace{\strut y\dots y}_{i-q}$}}(\bar{x}, \bar{y})=0,
	\quad q=1,\dots,i-1,\ \,i=1,\dots,p;
	\end{array}
$$

{\rm 2)} {\bf $p$-factor-approximation}:
for all $y_1, y_2 \in (N(\bar{y})-\bar{y})$,
\begin{align*}
	&	\left\| f_i(x,\bar{y}+y_1)-f_i(x,\bar{y}+y_2)-\dfrac{1}{i!}
	f^{(i)}_{i \, y\ldots y} (\bar{x}, \bar{y})[y_1]^i+
	\dfrac{1}{i!}
	f^{(i)}_{i \, y\ldots y} (\bar{x}, \bar{y}) [y_2]^i \right\| \\
	& \quad	\leq \varepsilon\left(\|y_1\|^{i-1}+\|y_2\|^{i-1}\right)\|y_1-y_2\|,
	\qquad i=1,\dots,p,
\end{align*}
where $\varepsilon>0$ is sufficiently small;

{\rm 3)} {\bf Banach condition}:
there exists a nonempty set
$\Gamma(\bar{x}) \subset  N(\bar{x})$ in $X$ such that
for any sufficiently small $\gamma$, we have $\Gamma(\bar{x}) \cap B_{\gamma}(\bar{x}) \neq \{\bar{x}\}$, where $B_{\gamma}(\bar{x})$ is a ball of radius $\gamma$ with the center $\bar{x}$.
Moreover, for $x \in \Gamma(\bar{x})$, there exists $h(x)$ such that
\begin{equation} \label{BanachC}
          \Phi_p[h(x)]^p=-F(x,\bar{y}), \quad
	\|h(x)\|\leq c_1 \sum\limits_{r=1}^p\|f_r(x,\bar{y})\|_{Z_{r}}^{1/r},
\end{equation}
where $0< c_1 < \infty$ is a constant;

{\rm 4)} {\bf uniform $p$-regularity condition}
of the mapping $F(x, y)$ over the set
$$\Phi_p^{-1} (-F(x, \bar{y})).$$
Moreover, assume that for any sufficiently small $\gamma$ such that
$B_{\gamma}(\bar{x}) \subset  N(\bar{x})$ the intersection of $\Gamma(\bar{x})$ and
$B_{\gamma}(\bar{x})$ is not empty.
\smallskip

Then there exists a constant $k>0$, a sufficiently small $\delta$,
and a mapping
$\varphi: \Gamma(\bar{x}) \cap B_\delta(\bar{x}) \to N(\bar{y})$
such that the following hold for $x \in  \Gamma(\bar{x}) \cap B_\delta(\bar{x})$:
$$
	\varphi(\bar{x})=\bar{y};
$$
$$
	F(x,\varphi(x))=0,
$$
$$
	\|\varphi(x)-\bar{y}\|_Y\leq
	k\sum\limits_{r=1}^p\|f_r(x,\bar{y})\|_{Z_{r}}^{1/r}.
$$
\end{theorem}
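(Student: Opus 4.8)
The plan is to fix $x \in \Gamma(\bar{x}) \cap B_\delta(\bar{x})$ and construct $\varphi(x) = \bar{y} + y_*$ by solving $F(x, \bar{y} + y) = 0$ for $y$ near $0$ through a modified (``$p$-factor'') Newton iteration built from the right inverse of the $p$-factor operator $\Psi_{p\,y}(\bar{x}, \bar{y}, h)$. The decomposition $Z = Z_1 \oplus \cdots \oplus Z_p$ reduces the equation to the system $f_i(x, \bar{y} + y) = 0$ in $Z_i$, $i = 1, \ldots, p$; by the singularity condition the leading dependence of $f_i$ on $y$ is the pure term $\tfrac{1}{i!} f^{(i)}_{i\, y\ldots y}(\bar{x}, \bar{y})[y]^i$, so $\Phi_p[\,\cdot\,]^p$ collects these leading parts and $\Psi_{p\,y}(\bar{x}, \bar{y}, y)$ plays the role of its (scaled) derivative. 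I would start the iteration from $y_0 = h(x)$ furnished by the Banach condition, which already solves the leading-order equation $\Phi_p[y_0]^p = -F(x, \bar{y})$ and satisfies the target estimate $\|y_0\| \le c_1 \sum_{r=1}^p \|f_r(x, \bar{y})\|_{Z_r}^{1/r}$.

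The iteration itself takes the form
\begin{equation*}
  y_{n+1} = y_n - \{\Psi_{p\,y}(\bar{x}, \bar{y}, y_n)\}^{-1}\, F(x, \bar{y} + y_n),
\end{equation*}
where the residual is read off component-by-component in $Z_1 \oplus \cdots \oplus Z_p$; note that the $i$-th row of $\Psi_{p\,y}$ is $\tfrac{1}{(i-1)!} f^{(i)}_{i\, y\ldots y}(\bar{x}, \bar{y})[y_n]^{i-1}$, so this is exactly the linearization of the homogeneous leading system at $y_n$. The first task is well-definedness: the uniform $p$-regularity condition over the set $\Phi_p^{-1}(-F(x,\bar{y}))$ guarantees that $\Psi_{p\,y}(\bar{x}, \bar{y}, y_n)$ is onto along the relevant directions and that its right inverse is bounded \emph{uniformly} in $n$ (and in $x$), which is what prevents the step sizes from blowing up as the degenerate direction is approached.

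The heart of the argument is the convergence estimate. Writing the increment $y_{n+1} - y_n$ in terms of the bounded right inverse and expanding $F(x, \bar{y} + y_n)$ against the $p$-factor part, the $p$-factor-approximation condition supplies exactly the Lipschitz-type bound needed to control the nonlinear remainder: in the $i$-th component the error is majorized by $\varepsilon(\|y_1\|^{i-1} + \|y_2\|^{i-1})\|y_1 - y_2\|$, which, combined with the uniform bound on $\{\Psi_{p\,y}\}^{-1}$, yields $\|y_{n+1} - y_n\| \le \theta \|y_n - y_{n-1}\|$ for some $\theta < 1$ once $\varepsilon$ and $\delta$ are taken small. I expect this step to be the main obstacle, precisely because the components carry different homogeneity degrees ($Z_i$ scales like $\|y\|^i$), so the map is not contractive in the plain norm; one must track each residual $\|f_i\|_{Z_i}$ with the matching weight $\|y\|^{i-1}$ and verify that the combined error decays geometrically while the iterates remain inside the neighborhood where conditions (2)--(4) are valid.

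Granting the geometric estimate, $\{y_n\}$ is Cauchy and converges to some $y_* \in N(\bar{y}) - \bar{y}$; continuity of $F$ gives $F(x, \bar{y} + y_*) = 0$, so $\varphi(x) := \bar{y} + y_*$ solves the equation. Telescoping the contraction estimate together with the initial bound from the Banach condition produces
\begin{equation*}
  \|\varphi(x) - \bar{y}\|_Y = \|y_*\| \le \frac{1}{1 - \theta}\,\|y_0\| \le k \sum_{r=1}^p \|f_r(x, \bar{y})\|_{Z_r}^{1/r},
\end{equation*}
which is the asserted bound. Finally, setting $x = \bar{x}$ gives $f_r(\bar{x}, \bar{y}) = 0$ for all $r$, so $y_0 = 0$ and the estimate forces $y_* = 0$, i.e. $\varphi(\bar{x}) = \bar{y}$, completing the verification.
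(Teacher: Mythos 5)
The paper itself gives no proof of Theorem \ref{GIFT}; it is quoted from \cite{BTM08}, where the argument runs as follows: for fixed $x\in\Gamma(\bar{x})\cap B_\delta(\bar{x})$ one applies the contraction mapping principle for \emph{multivalued} mappings (Lemma 1 of \cite{theoryofetremal}, the same tool the present paper invokes for the generalized Lyusternik theorem) to the set-valued map $\Xi(y)=y-\bigl\{\Psi_{p\,y}(\bar{x},\bar{y},h(x))\bigr\}^{-1}\bigl(F(x,\bar{y}+y)\bigr)$, in which the $p$-factor operator is \emph{frozen} along the element $h(x)$ supplied by the Banach condition \eqref{BanachC}, and the fixed point (which satisfies $F(x,\bar{y}+y_*)=0$ by construction) is sought in a ball centered at $h(x)$ whose radius is a small multiple of $\|h(x)\|$. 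Your proposal has the same skeleton --- initialize at $y_0=h(x)$, invert the $p$-factor operator, control the nonlinearity by the $p$-factor-approximation, pass to the limit, and read the final estimate off the Banach condition --- but you re-linearize at every iterate, and that choice opens a genuine gap.

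Concretely: hypothesis 4) asserts uniform $p$-regularity only over the set $\Phi_p^{-1}(-F(x,\bar{y}))$. Your iterates $y_n$ leave this set after the first step, so the uniform bound on $\bigl\{\Psi_{p\,y}(\bar{x},\bar{y},y_n)\bigr\}^{-1}$ that your contraction estimate requires is not among the hypotheses; it would itself have to be established by a perturbation argument, and that is exactly where the homogeneity bites. Since the $Z_i$-component of $\Psi_{p\,y}(\bar{x},\bar{y},h)$ is homogeneous of degree $i-1$ in $h$, its right inverse costs a factor of order $\|h(x)\|^{-(i-1)}$ on $Z_i$, and $\|h(x)\|\to 0$ as $x\to\bar{x}$, so nothing is uniform ``for free'' and every estimate must be run at the scale of $h(x)$. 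This is also why the contraction inequality $\|y_{n+1}-y_n\|\le\theta\|y_n-y_{n-1}\|$, which you flag as the main obstacle and then assert, is the entire content of the proof rather than a routine step: one checks from \eqref{BanachC} and condition 2) (taking $y_1=h(x)$, $y_2=0$) that the initial residual obeys $\|f_i(x,\bar{y}+h(x))\|\le\varepsilon\|h(x)\|^i$, hence after applying the inverse the first step has size $O(\varepsilon\|h(x)\|)$, and that on the ball of this radius about $h(x)$ the \emph{frozen}-operator map is a contraction with constant $O(\varepsilon)$; with the moving linearization these scales are not controlled by the stated assumptions. Finally, $\{\Psi_{p\,y}\}^{-1}$ is set-valued (the operator is onto but in general not injective), so the scheme must be run through the multivalued contraction principle or through explicit norm-controlled selections; a single-valued Newton step is not defined as written. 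With the operator frozen along $h(x)$ and the multivalued principle in place, your outline closes; as proposed, it does not yet constitute a proof.
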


There are generalizations of implicit function theorems for nonregular mappings derived by other authors. Some examples include a generalization of the implicit function theorem and its application to a parametric linear time-optimal control problem presented in \cite{Ko2006}, generalized implicit function theorems applied to ordinary differential equations in \cite{Ar2020}, and implicit function theorems for 2-regular mappings in \cite{Ar2010, Ar2021}. 

%%%%%%%%%%%%%%

\subsection{Newton's method}
\label{sec:Newton}

\subsubsection{Classical Newton's method for nonlinear equations and unconstrained optimization problems}
\label{sub3.3}

Consider the problem of solving nonlinear equation \eqref{F(x)=0},
where $F:X\rightarrow Y$ is sufficiently smooth, so that 
$F\in \mathcal{C}^{p+1}(X)$ for some $ p\in \mathbb{N}.$ Let $\bar{x}$ be a~solution
to  Equation \eqref{F(x)=0}, that is $F(\bar{x})=0$. Assume that mapping $F$ is singular at the point $\bar{x}$.

In the finite dimensional case, when $F(x)=(F_1(x),\ldots, F_n(x))^T$, $X=\mathbb{R}^n$, and $Y=\mathbb{R}^n$, the singularity of $F$ at $\bar{x}$ means that the Jacobian $F'(\bar{x})$  of $F$ 
 is singular at $\bar{x}$, as in the following  example.

 \begin{example}[\cite{SzPrTr12}]
 \label{3ex11}
Consider function $F:\mathbb{R}^2 \rightarrow\mathbb{R}^2$ from Example \ref{3ex1} defined by 
$$
    F(x)=\left(
           \begin{array}{c}
             x_1+x_2 \\
             x_1x_2 \\
           \end{array}
         \right),
$$
where $\bar{x}=(0,0)^T$ is a
solution to Equation \eqref{F(x)=0} and $
    F'(\bar{x})=\left(
           \begin{array}{cc}
             1 & 1 \\
             0& 0 \\
           \end{array}
         \right)
$ is singular (degenerate) at the point $\bar{x}.$

Consider a sufficiently small $\varepsilon>0$ and some 
initial point $ x^0 \in B(0, \varepsilon)$. 
The classical Newton method is defined by
\begin{equation}
\label{3eq1}
    x^{k+1}=x^{k}-\left( F'(x^k) \right)^{-1} F(x^k), \quad k=0,1,\ldots .
\end{equation}
If $x^k = (x_1, x_2)$, we get in this example
$$    F'(x^k)=\left(
           \begin{array}{cc}
             1 & 1 \\
             x_2 & x_1 \\
           \end{array}
         \right), \quad
           \left( F'(x^k) \right)^{-1}=\dfrac{1}{x_1- x_2}\left(
           \begin{array}{cc}
             x_1 & -1 \\
            - x_2 & 1 \\
           \end{array}
         \right) .
$$
Then
$$
         \left(F'(x^k) \right)^{-1}F(x^k) =
         \dfrac{1}{x_1- x_2}
         \left(
           \begin{array}{cc}
 x_1^2 \\
 -x_2^2
           \end{array}
         \right), 
         $$
         $$
                \quad
         x^{k+1} =
         x^{k} - \left( F'(x^k) \right)^{-1}F(x^k) =
         \dfrac{1}{x_1- x_2}
         \left(
           \begin{array}{cc}
- x_1  x_2  \\
 x_1 x_2
           \end{array}
         \right) .
$$

If $x_{1}=x_{2}$, then  $\left( F'(x^k) \right)^{-1}$ does not exist and, hence, method \eqref{3eq1} is not applicable.
Even in the case when $\left( F'(x^k) \right)^{-1}$ exists,  method \eqref{3eq1} might be diverging.
As an example, consider point 
$x^k=(t+t^3,t)^T$, where $t$ is sufficiently small. Then $$x^{k+1}=
 \dfrac{1}{t^3}
         \left(
           \begin{array}{cc}
- t^2 - t^4 \\
t^2+t^4
           \end{array}
         \right) = \left(-\dfrac{1}{t}-t,
\dfrac{1}{t}+t \right)^T$$ and, for a sufficiently small values of $t$,  $\|x^{k+1}-\bar{x}\| = \|x^{k+1}-0\|\approx\dfrac{1}{t}\rightarrow\infty$ when $t\rightarrow 0^+.$
For instance, if $t=10^{-5}$, then $\|x^{k+1}-0\|\approx10^5$ and the method \eqref{3eq1} is diverging.
\end{example}

\begin{example}\cite{Reddien78}
\label{3ex2}
Let mapping $F:\mathbb{R}^2 \rightarrow\mathbb{R}^2$ be defined by 
$$
F(x)=\left(\begin{array}{l}
x_{1}+x_{1}x_{2}+x_{2}^{2}\\
x_{1}^{2}-2x_{1}+x_{2}^{2}
\end{array}\right).
$$
In this example,  singular  root of $F(x)=0$  is $\bar{x}=(0,0)^{T}$.
It is easy to verify that
$\Ker   F(\bar{x})=\operatorname{span}\{(0,1)\}$
and 
$\operatorname{Im}  F(\bar{x})=\operatorname{span}\{(1,-2)\}.$
Also, the Jacobian $F'(x)$ is  singular on the hyperbola given by
$$
2x_1-2x_{1}^{2}+6x_{2}-4x_{1}x_{2}+2x_{2}^{2}=0.
$$
\end{example}
For the overview of the existing approaches to Newton-like methods for singular operators, see e.g.
\cite{BKL2010}.

Now we consider Newton's method for finding critical points of an unconstrained optimization problem:
\begin{equation}
\label{NMopt}
\min_{x\in\mathbb{R}^{2}}f(x),
\end{equation}
where $ f:\mathbb{R}^2 \rightarrow \mathbb{R}$.
Classical Newton's method applied to problem \eqref{NMopt} has the form
\begin{equation}
\label{3eq2}
    x^{k+1}=x^{k}- ( f''(x^k) )^{-1}f'(x^k).
\end{equation}
As an example, consider minimization of function $f$ given by 
$ f(x)=x_1^2+x_1^2x_2+x_2^4$ (see \cite{SzPrTr12}).
One of the critical points of the function $f$ is $\bar{x}=(0,0)^T$.
Let
$x^0=(x^{0}_{1},x^{0}_{2})^T$ where $x^{0}_{1}=x^{0}_{2}\sqrt{6(1+x^{0}_{2})}$. Then
$$
f''(x^0)=\left(%
\begin{array}{cc}
  2+2 x^{0}_{2} & 2 x^{0}_{2} \sqrt{6(1+x^{0}_{2})} \\
  2x^{0}_{2} \sqrt{6(1+x^{0}_{2} )} & 12 (x^{0}_{2})^2 \\
\end{array}%
\right)
$$ and
$\det f''(x^0)=0.$  Hence, $( f''(x^0) )^{-1}$  does not exist, so Newton's method \eqref{3eq2} is not applicable.

\subsubsection{The $p$-factor Newton's method}
\label{sub5.5}

In this section, we describe a method for solving nonlinear equations given in the form \eqref{F(x)=0}, where
$F:\mathbb{R}^n\rightarrow \mathbb{R}^n$ and the matrix $F'(\bar{x})$ is singular at the solution point
$\bar{x}$ (see \cite{SzPrTr12}). The proposed method is based on the construction of the $p$-factor operator.

 There are various publications describing the p-factor-method for solving degenerate nonlinear systems and nonregular optimization problems. Some examples are \cite{BeTr88, BrEvTr06, CGA2023, SzPrTr12, SzTr08}.

Let $h \in \mathbb{R}^n$, 
$$Y_1=\hbox{Im} F'(\bar{x}),\quad
\bar{P}_1=P_{Y_1^{\perp}}, \quad 
Y_2=\hbox{Im} \left(F'(\bar{x})+\bar{P}_1F''(\bar{x})h\right), \quad \mbox{and} \quad
\bar{P}_2=P_{Y_2^{\perp}}.$$
For each $k=2,\ldots,p-1,$ define 
$\bar{P}_{k+1}=P_{Y_{k+1}^{\perp}}$ and  
$$Y_{k+1}=\operatorname{Im}  \left(F'(\bar{x})+ \sum\limits_{i=1}^k
\bar{P}_iF''(\bar{x})h+\sum\limits_{%
\begin{smallmatrix}
  i_2>i_1 \\
  i_1,i_2\in \{1,\ldots,k \}
\end{smallmatrix}} \bar{P}_{i_2}\bar{P}_{i_1}F^{(3)}(\bar{x})[h]^2+\cdots \right.$$
$$\left.+\sum\limits_{%
\begin{smallmatrix}
  i_k>\ldots >i_1 \\
  i_1,\ldots ,i_k \in \{1,\ldots,k \}
\end{smallmatrix}}\bar{P}_{i_k}\ldots \bar{P}_{i_1}F^{(k)}(\bar{x})[h]^{(k-1)}
\right).$$ 

Let $h$ be a fixed vector such that   $\|h\|=1$ and
mapping $F$ is $p$-regular at the solution $\bar{x}$ along vector $h$.
Let matrices  $P_i, i= 1,\ldots,p-1$, be defined as follows:
$$P_1= \sum\limits_{i=1}^{p-1}\bar{P}_i, \quad P_2=\sum\limits_{%
\begin{smallmatrix}
  i_2>i_1 \\
  i_1,i_2\in \{1,\ldots,p-1 \}
\end{smallmatrix}}\bar{P}_{i_2}\bar{P}_{i_1}, $$$$
P_{k+1}=\sum\limits_{%
\begin{smallmatrix}
  i_k>\ldots>i_1 \\
  i_1,\ldots,i_k \in \{ 1,\ldots,p-1 \}
\end{smallmatrix}}\bar{P}_{i_k}\ldots\bar{P}_{i_1},\quad  k= 2,\ldots,p-1.$$

Consider the equation 
    $$F(\bar{x})+P_1F'(\bar{x})h+\ldots+P_{p-1}F^{(p-1)}(\bar{x})[h]^{p-1}=0.$$
Note that under these assumptions,
the $p$-factor matrix given by 
\begin{equation}
\label{5eq6}
\left(F'(\bar{x})+P_1F''(\bar{x})h+\ldots+P_{p-1}F^{(p)}(\bar{x})[h]^{p-1}\right)
\end{equation}
is not singular. Hence, 
$\bar{P}_p=0 $ and $Y_p=\mathbb{R}^n.$

 Then the $p$-factor Newton method can be defined as
\begin{equation}
\label{5eq5}
    x^{k+1}=x^k-\left(F'(x^k)+P_1F''(x^k)h+\ldots+P_{p-1}F^{(p)}(x^k)[h]^{p-1} \right)^{-1}
\end{equation}
$$
  \qquad    \qquad    \qquad   {\times} \left(F(x^k)+P_1F'(x^k)h+\ldots+P_{p-1}F^{(p-1)}(x^k)[h]^{p-1} \right).
$$

In the case of $p=2$, the $2$-factor-Newton method reduces to the following:
\begin{equation}
\label{5eq7}
    x^{k+1}=x^k- \left(F'(x^k)+P_1F''(x^k)h\right)^{-1} (F(x^k)+P_1F'(x^k)h)
\end{equation}
where $P_1$ is the orthoprojection onto $\hbox{Im}
(F'(\bar{x}))^{\perp}$ and vector $h$ $(\|h\|=1)$ is chosen in  such a way
that the $2$-factor matrix
\begin{equation}
\label{5eq8}
\left(    F'(\bar{x})+P_1F''(\bar{x})h \right)
\end{equation}
is not singular. In fact, it means that $F$ is $2$-regular along $h$.
Then the equation
$$
    F(\bar{x})+P_1F'(\bar{x})h=0
$$
is satisfied at $\bar{x}$. %, so we can solve the following equation
%$$
%    F(x)+P_1F'(x)h=0
%$$
%for $\bar{x}$.
Note that by \ref{5eq8},
the point  $\bar{x}$ is a locally unique solution of the nonlinear equation  \eqref{F(x)=0}.

The $2$-factor Newton method can be applied to solve the equation in \ref{3ex11}.

\begin{theorem}[\cite{SzPrTr12}]
\label{5th9}
Let $F\in \mathcal{C}^p(\mathbb{R}^n)$ and there exists $h,\|h\|=1$, such
that the $p$-factor matrix defined in \ref{5eq6} is not singular. Then for
any $x^0 \in U_\varepsilon(\bar{x})$ (with $\varepsilon>0$
sufficiently small) and for the sequence $\{x^k\}$ generated by applying method \ref{5eq5}, the following inequality holds for some constant $c>0$:
\begin{equation}
\label{2.8}
    \|x^{k+1}-\bar{x}\|\leq c\|x^{k}-\bar{x}\|^2, k=0,1,\ldots .
\end{equation}
\end{theorem}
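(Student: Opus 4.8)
The plan is to recognize that the $p$-factor Newton iteration \eqref{5eq5} is simply the classical Newton method applied to an auxiliary mapping, and then to invoke the standard local theory of Newton's method for that mapping. Concretely, introduce
\[
\Phi(x) = F(x)+P_1F'(x)h+\ldots+P_{p-1}F^{(p-1)}(x)[h]^{p-1},
\]
which is exactly the vector multiplied by the inverse in \eqref{5eq5}. Since $h$ is held fixed, differentiating $\Phi$ term by term in $x$ reproduces the $p$-factor matrix \eqref{5eq6}, because $\frac{d}{dx}\,P_jF^{(j)}(x)[h]^{j}=P_jF^{(j+1)}(x)[h]^{j}$ and $F$ differentiates to $F'$. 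Hence iteration \eqref{5eq5} reads $x^{k+1}=x^k-\bigl(\Phi'(x^k)\bigr)^{-1}\Phi(x^k)$, the ordinary Newton scheme for solving $\Phi(x)=0$.

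Second, I would verify that $\bar{x}$ is a \emph{regular} root of $\Phi$ in the classical sense. Nonsingularity of the derivative is immediate: $\Phi'(\bar{x})$ equals the $p$-factor matrix \eqref{5eq6}, which is nonsingular by hypothesis, so $\bigl(\Phi'(x)\bigr)^{-1}$ exists and is uniformly bounded, say by $\beta$, on a ball $U_\varepsilon(\bar{x})$ (invertibility is an open condition and matrix inversion is continuous). The identity $\Phi(\bar{x})=0$ is the step that uses the specific construction of the projections: $F(\bar{x})=0$, while each term $P_jF^{(j)}(\bar{x})[h]^{j}$ lies, by the inductive definition of the subspaces $Y_j$ and the telescoping combinations $P_1,\ldots,P_{p-1}$ of the orthoprojectors $\bar{P}_i$, in a subspace annihilated by the accompanying projection. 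For $p=2$ this is transparent, since $F'(\bar{x})h\in\operatorname{Im}F'(\bar{x})=Y_1$ and $P_1$ projects onto $Y_1^{\perp}$, giving $F(\bar{x})+P_1F'(\bar{x})h=0$; the general case is the combinatorial analogue recorded just above \eqref{5eq6}.

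Third, with $\Phi(\bar{x})=0$ and $\Phi'(\bar{x})$ invertible in hand, I would run the textbook quadratic-convergence estimate for $\Phi$. Writing
\[
x^{k+1}-\bar{x}=\bigl(\Phi'(x^k)\bigr)^{-1}\left[\Phi'(x^k)(x^k-\bar{x})-\bigl(\Phi(x^k)-\Phi(\bar{x})\bigr)\right]
\]
and expressing $\Phi(x^k)-\Phi(\bar{x})=\int_0^1\Phi'(\bar{x}+t(x^k-\bar{x}))(x^k-\bar{x})\,dt$, the bracket becomes $\int_0^1\bigl[\Phi'(x^k)-\Phi'(\bar{x}+t(x^k-\bar{x}))\bigr](x^k-\bar{x})\,dt$. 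Using a local Lipschitz bound $\|\Phi'(u)-\Phi'(v)\|\le L\|u-v\|$ near $\bar{x}$, available from the smoothness of $F$ (so that $\Phi'$, which involves $F^{(p)}$, is locally Lipschitz), the integral is at most $\tfrac{L}{2}\|x^k-\bar{x}\|^2$. Multiplying by $\beta$ yields \eqref{2.8} with $c=\beta L/2$, after shrinking $\varepsilon$ so that $c\varepsilon<1$ forces the iterates to remain in $U_\varepsilon(\bar{x})$, which keeps all the bounds valid for every $k$.

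The main obstacle is the verification that $\Phi(\bar{x})=0$ for general $p\ge 3$: this is the one step that genuinely depends on the nested construction of the spaces $Y_1,\ldots,Y_{p-1}$ and the products of orthoprojectors defining $P_1,\ldots,P_{p-1}$, and it must be checked that every projected higher-order term is annihilated at $\bar{x}$. Once this structural identity is secured, together with the given nonsingularity of the $p$-factor matrix, the remainder is the standard Newton convergence analysis applied verbatim to the auxiliary mapping $\Phi$.
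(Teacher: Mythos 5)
Your proposal is correct, and it is the intended argument: the paper itself gives no proof of Theorem \ref{5th9} (it is quoted from \cite{SzPrTr12}), but the construction surrounding \eqref{5eq6} is set up precisely so that the iteration \eqref{5eq5} is the classical Newton method for the auxiliary map $\Phi(x)=F(x)+P_1F'(x)h+\ldots+P_{p-1}F^{(p-1)}(x)[h]^{p-1}$, whose derivative at $x$ is the $p$-factor matrix and whose vanishing at $\bar{x}$ is exactly the displayed equation the paper ``considers'' just before \eqref{5eq6}. The one step you flag as an obstacle --- $\Phi(\bar{x})=0$ for $p\ge 3$ --- does go through, and can be closed by a grouping (telescoping) argument rather than term-by-term annihilation: in $\sum_{j}P_jF^{(j)}(\bar{x})[h]^j$, collect all products $\bar{P}_{i_j}\cdots\bar{P}_{i_1}$ according to their leading projector $\bar{P}_m$; since the trailing indices are all smaller than $m$, the sum of the terms led by $\bar{P}_m$ equals
\begin{equation*}
\bar{P}_m\Bigl[\Bigl(F'(\bar{x})+\sum_{i<m}\bar{P}_iF''(\bar{x})h+\ldots+\bar{P}_{m-1}\cdots\bar{P}_1F^{(m)}(\bar{x})[h]^{m-1}\Bigr)h\Bigr],
\end{equation*}
and the operator in brackets is precisely the one whose image defines $Y_m$, so $\bar{P}_m=P_{Y_m^{\perp}}$ kills it. Note that individual terms such as $\bar{P}_2F'(\bar{x})h$ need not vanish; only these grouped sums do, so the grouping is genuinely needed.

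Two minor points of hygiene. First, your quadratic estimate requires $\Phi'$ to be locally Lipschitz, i.e.\ $F^{(p)}$ locally Lipschitz near $\bar{x}$; this is not guaranteed by the theorem's literal hypothesis $F\in\mathcal{C}^p(\mathbb{R}^n)$, but it is covered by the standing assumption $F\in\mathcal{C}^{p+1}$ made at the start of Section \ref{sec:Newton}, which is evidently the intended regularity (with only $\mathcal{C}^p$ one would get superlinear, not necessarily quadratic, convergence). Second, in the induction keeping the iterates in $U_\varepsilon(\bar{x})$ you should shrink $\varepsilon$ so that $c\varepsilon<1$ \emph{and} the uniform bound $\beta$ on $\bigl(\Phi'(x)\bigr)^{-1}$ and the Lipschitz constant $L$ both hold on $U_\varepsilon(\bar{x})$; with that, your conclusion \eqref{2.8} with $c=\beta L/2$ is exactly the claimed estimate.
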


\begin{example}[\cite{SzPrTr12}]
\label{5ex10}
Let
$
    F(x)=\left(
           \begin{array}{c}
             x_1+x_2 \\
             x_1x_2 \\
           \end{array}
         \right),
$
$\bar{x}=(0,0)^T$. It was shown in  \ref{3ex1} and  \ref{3ex11} that $F$ is singular at $\bar{x}=(0,0)^T.$
To apply the $2$-factor Newton method given in \ref{5eq7} to this problem, we first define the $2$-factor-operator in the form:
$$F'(x^k)+P_1F''(x^k)h=\left(
                         \begin{array}{cc}
                           1 & 1 \\
                           x^k_2-1 & x^k_1+1\\
                         \end{array}
                       \right),$$
                       where
  $P_1=\left(
           \begin{array}{cc}
             0 & 0 \\
             0 & 1 \\
           \end{array}
         \right)$ and $h=(1,-1)^T.$
Then the $2$-factor Newton method \ref{5eq7} in this example is defined as follows:

$$
    x^{k+1}=x^k-\left(
                         \begin{array}{cc}
                           1 & 1 \\
                           x^k_2-1 & x^k_1+1\\
                         \end{array}
                       \right)^{-1} \left(
                                           \begin{array}{c}
                                             x^k_1+x^k_2\\
                                             x^k_1 x^k_2+x^k_2-x^k_1  \\
                                           \end{array}
                                         \right)   =\left(
                                           \begin{array}{c}
                                             0\\
                                             x^k_1 x^k_2
                                           \end{array}
                                         \right).
$$
Moreover,
$
     \|x^{k+1}-0\|\leq c\|x^{k}-0\|^2.
$
\end{example}

\bigskip

\begin{example}[\cite{SzPrTr12}]
\label{5ex11}
 Consider the following problem
$$
 \min_{x\in \mathbb{R}^2} \, f(x),
$$
where $f:\mathbb{R}^2\rightarrow \mathbb{R}$ is defined by $f(x)=x_1^2+x_1^2x_2+x_2^4$. Let
$
F(x)=f'(x)$. Then $f'(x)=\left(%
\begin{array}{c}
  2x_1+2x_1 x_2 \\
  x_1^2+4x_2^3 \\
\end{array}%
\right)$ and
$\bar{x}=(0,0)^T$.
It is easy to see that $F$ is 3-regular at $\bar{x}$ along $h=(1,1)^T$.

In this example, $$\bar{P}_1=\left(%
\begin{array}{cc}
  0 & 0 \\
  0 & 1 \\
\end{array}%
\right), \quad
\bar{P}_2=\dfrac{1}{2}\left(%
\begin{array}{cc}
  1 & -1 \\
  -1 & 1 \\
\end{array}%
\right),$$
$$P_1=\bar{P}_1+\bar{P}_2=\dfrac{1}{2}\left(%
\begin{array}{cc}
  1 & -1 \\
  -1 & 3 \\
\end{array}%
\right), \quad
P_2=\bar{P}_2\bar{P}_1=\dfrac{1}{2}\left(%
\begin{array}{cc}
  0 & -1 \\
  0 & 1 \\
\end{array}%
\right).$$
Then the following matrix is nonsingular:
$$
  F'(0)+P_1 F''(0)h+P_2 F^{(3)}(0)[h]^2 = f''(0)+P_1 f^{(3)}(0)h+P_2 f^{(4)}(0)[h]^2=
 \left(%
\begin{array}{cc}
  2 & -11 \\
  2 & 11 \\
\end{array}%
\right).
$$

Consider the 3-factor method:
$$  
\begin{array}{l}
x^{k+1} = x^k-\\
\left(f''(0)+P_1f^{(3)}(0)[h]+P_2f^{(4)}(0)[h]^2\right)^{-1}
\left(f'(x^k)+P_1f''(x^k)[h]+P_2f^{(3)}(x^k)[h]^2\right).
\end {array}
$$

Let  $x^k=(x_1,x_2)^T.$ Then
\begin{eqnarray*}
\|x^{k+1}-0\| &=& \left\|x^k-\left(%
\begin{array}{cc}
  2 & -11 \\
  2 & 11 \\
\end{array}%
\right)^{-1}\left(%
\begin{array}{c}
  2x_1-11x_2+2x_1 x_2-6x_2^2 \\
  2x_1+11x_2+x_1^2+18x_2^2+4x_2^3 \\
\end{array}%
\right) \right\|=\\
&=&\dfrac{1}{44}\left\|\left(%
\begin{array}{c}
  11x_1^2+132x_2^2+22x_1 x_2+44x_2^3 \\
  2x_1^2+48x_2^2-4x_1x_2+8x_2^3 \\
\end{array}%
\right) \right\|\leq 10 \|x^{k}-0\|^2.
\end{eqnarray*}
\end{example}

%%%%%
\subsection{Optimality conditions for equality-constrained optimization problems}
\label{sub3.2}
In this section, we consider optimization problem \eqref{phi-min}:
$$
\min f(x) \quad \hbox{ subject to } \quad F(x)=0,
$$
where $f:X\rightarrow\mathbb{R}$  is a sufficiently smooth function and  $F:X\rightarrow Y$ a sufficiently smooth mapping from a Banach space $X$ to a
Banach space $Y$.

%%%%%
\subsubsection{Optimality conditions. Lagrange multiplier theorem}
\label{sub451}

There is an extensive body of literature discussing optimality conditions for regular constrained optimization problems, which are problems that satisfy certain constraint qualifications. One notable reference on this topic is Chapter 3 of the book~\cite{BoSh2000}.

The classical optimality  conditions state that if $\bar{x}$ is a regular solution of Problem \eqref{phi-min}, then there exists Lagrange multiplier $\bar{\lambda}\in Y^{\ast}$ such that
\begin{equation} \label{Opt=}
f '(\bar{x})=F'(\bar{x})^{\ast}\bar{\lambda}.
\end{equation}

The situation changes  in the degenerate case when the derivative $F\,'(\bar{x})$ is not onto, so classical optimality  conditions in the form of equation \eqref{Opt=} do not  hold. 
As an example, consider the following problem 
\begin{equation} \label{5eq4}
   \begin{array} {ll}
       \mathop{\rm minimize}\limits_{x \in \mathbb{R}^{3}} &
        x_2^2+x_3 \\
       {\rm subject \; to}\; & \left( 
       \begin{array}{c}
                                                                            x_1^2-x_2^2+x_3^2 \\
                                                                            x_1^2-x_2^2+x_3^2+x_2x_3 
                                                                            \end{array}%
                                                                            \right) = \left(
                                                                            \begin{array}{c}
                                                                            0 \\
                                                                            0 
                                                                            \end{array}%
                                                                            \right).
   \end{array}
\end{equation}
Note that mapping $F(x)=\left(%
                                                                            \begin{array}{c}
                                                                            x_1^2-x_2^2+x_3^2 \\
                                                                            x_1^2-x_2^2+x_3^2+x_2x_3 \\
                                                                            \end{array}%
                                                                            \right)$  was  introduced in \eqref{F=exx}
In this problem,  
$$\bar{x}=(0,0,0)^T, \quad 
f'(\bar{x})=(0,0,1)^T, \quad
\mbox{and} \quad
F'(\bar{x})=\left(%
\begin{array}{ccc}
  0 & 0 & 0 \\
  0 & 0 & 0 \\
\end{array}%
\right).$$ Hence
$f'(\bar{x})\neq F'(\bar{x})^T \bar{\lambda}$ and equation \eqref{Opt=} does not hold.

\subsubsection{Optimality conditions for $p$-regular optimization problems}
\label{sub5.4}

In this section, we will focus on the case when the equality constraints defined by  mapping $F(x)$
are not regular at a solution $\bar{x}$ of the problem (\ref{phi-min}).
We define the $p$-factor-Lagrange function as
$$\mathcal{L}_p(x,\lambda (h),h)=  f(x)+  \mathop{\sum}\limits_{i=1}^{p} 
      \langle \lambda_i(h), \, f_i^{(i-1)}(x)[h]^{i-1}
      \rangle,$$
where $x \in X$, $h \in X$, 
$\lambda_i(h) \in Y_i^*$, $i=1,\dots,p$, and mappings $f_i(x)$ are defined in \eqref{4eq2}.
Note that the $p$-factor-Lagrange function is a generalization of the Lagrange function 
and it reduces to the classical Lagrange function in the regular case.

The development of optimality conditions for nonregular problems has become an active research topic (see \cite{Av2013, BrTr03, BrTr10, CON2021, Gfrerer, Tr84} and references therein).
      
To state sufficient conditions in Theorem \ref{5th7}, we also need another version of the $p$-factor-Lagrange function that is defined as follows:
\begin{equation}  \label{Lbar}
  \bar{\mathcal{L}}_p(x,\lambda (h),h) =   f(x)+  \mathop{\sum}\limits_{i=1}^{p}  \dfrac{2}{i(i+1)}
      \langle \lambda_i(h), \, f_i^{(i-1)}(x)[h]^{i-1}
      \rangle.
\end{equation}

To state optimality conditions for $p$-regular optimization problems, we use \ref{5def2} of a strongly $p$-regular at $\bar{x}$  mapping $F$. Recall that  set $H_p(\bar{x})$ is defined in Equation \eqref{Hp} and
operator $\Psi_p(h )$ is defined in Equation \eqref{4eq3}.

\begin{theorem}[\cite{Tr84}, Necessary and sufficient conditions for optimality]
\label{5th7}
Let $X$ and $Y$ be Banach spaces, $U$ be a neighborhood of a point $\bar{x}$ in $X$,
$f:U\rightarrow \mathbb{R}$ be a twice continuously Fr\'echet differentiable function in $U$, and 
$F: U \to Y$ be a $(p+1)$-times Fr\'echet differentiable mapping in $U$.

\vspace{2mm}

{\bf Necessary conditions for optimality.}

Assume that for an element $h \in H_p(\bar{x})$  the set $Im\, \Psi_p(h )$ is
closed in $Y_1 \oplus \ldots \oplus Y_p$.
Suppose that $F$ is $p$-regular at the point
$\bar{x}$ along the vector $h\in H_p(\bar{x})$. If $\bar{x}$ is a local minimizer to problem
\ref{phi-min}, then there exist multipliers 
$\bar{\lambda}(h)\in Y^{\ast}$ such that
\begin{equation}
\label{5eq2}
    \mathcal{L}'_{px}(\bar{x},\bar{\lambda}(h),h)=0.
   \end{equation}
   
{\bf Sufficient conditions for optimality.}   
Assume that the set ${\rm Im} \Psi_p(h )$ is closed in
$Y_1 \oplus \ldots \oplus Y_p$ for any element $h \in H_p(\bar{x})$
and
${\rm Im} \, \Psi_p(h) = Y_1 \oplus \ldots \oplus Y_p.$ 
Assume that mapping $F$ is strongly $p$-regular at $\bar{x},$ there
exist constant $\alpha>0$ and a multiplier $\bar{\lambda}(h)$ such that
\ref{5eq2} is satisfied and
\begin{equation}
\label{5eq3}
    \bar{\mathcal{L}}_{pxx}(\bar{x},\bar{\lambda}(h),h)[h]^2\geq \alpha \|h\|^{2}
\end{equation}
for every $h\in H_p(\bar{x}),$ where $ \bar{\mathcal{L}}_{pxx}(x,\lambda (h),h)$ is defined in Equation \eqref{Lbar}.
Then $\bar{x}$ is a strict local
minimizer to the problem \ref{phi-min}.
\end{theorem}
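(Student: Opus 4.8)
The plan is to treat the two parts separately, reducing each to a statement about the $p$-factor operator $\Psi_p(h)$ and its adjoint. For the necessary conditions I first rewrite the stationarity equation \eqref{5eq2} in operator form. Since $f_i^{(k)}(\bar{x})=0$ for $k<i$ by \eqref{image_projection}, differentiating the $p$-factor-Lagrange function term by term gives $\mathcal{L}'_{px}(\bar{x},\bar{\lambda}(h),h)=f'(\bar{x})+\Psi_p(h)^{\ast}\bar{\lambda}(h)$, where $\bar{\lambda}(h)=(\lambda_1(h),\dots,\lambda_p(h))$ is read as an element of $Y_1^{\ast}\oplus\cdots\oplus Y_p^{\ast}=Y^{\ast}$. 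Hence \eqref{5eq2} is equivalent to $f'(\bar{x})\in\operatorname{Im}\Psi_p(h)^{\ast}$, and since $\operatorname{Im}\Psi_p(h)$ is assumed closed, the closed range theorem identifies $\operatorname{Im}\Psi_p(h)^{\ast}=(\Ker\Psi_p(h))^{\perp}$. Thus it suffices to prove the annihilation $f'(\bar{x})[\eta]=0$ for every $\eta\in\Ker\Psi_p(h)$.

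To establish this annihilation I would exploit minimality along feasible curves. By the Generalized Lyusternik Theorem \ref{5th1}, $T_1 M(\bar{x})=H_p(\bar{x})$; since $H_p(\bar{x})$ is symmetric under $v\mapsto -v$ (because $f_i^{(i)}(\bar{x})[-v]^i=(-1)^i f_i^{(i)}(\bar{x})[v]^i=0$), local minimality forces $f'(\bar{x})$ to vanish on $H_p(\bar{x})$, in particular $f'(\bar{x})[h]=0$. Next, for a fixed $\eta\in\Ker\Psi_p(h)$, I would build a feasible curve $x(t)=\bar{x}+th+t^2\eta+r(t)$ with $\|r(t)\|=o(t^2)$ and $F(x(t))=0$ for small $t>0$: the order-$t^i$ term $f_i^{(i)}(\bar{x})[h]^i$ of each block $f_i(x(t))$ vanishes because $h\in H_p(\bar{x})$, the order-$t^{i+1}$ cross term $f_i^{(i)}(\bar{x})[h]^{i-1}[\eta]$ vanishes because $\eta\in\Ker\Psi_p(h)$, and the remaining higher-order defect is absorbed by $r(t)$ using the surjectivity of $\Psi_p(h)$ (the same contraction argument underlying \ref{5th1}). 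Expanding $f$ along this curve and using $f'(\bar{x})[h]=0$ gives $f(x(t))-f(\bar{x})=t^2 f'(\bar{x})[\eta]+o(t^2)$, so minimality yields $f'(\bar{x})[\eta]\ge 0$; replacing $\eta$ by $-\eta\in\Ker\Psi_p(h)$ gives the reverse inequality, hence $f'(\bar{x})[\eta]=0$. The main obstacle here is the rigorous construction of $r(t)$ in the general (not completely degenerate) case, where the orders coming from the different blocks $f_i$ must be matched simultaneously; I expect to quote the machinery behind \ref{5th1} rather than redo it.

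For the sufficient conditions I would argue by contradiction. If $\bar{x}$ is not a strict local minimizer, there is a feasible sequence $x_k\to\bar{x}$, $x_k\neq\bar{x}$, with $F(x_k)=0$ and $f(x_k)\le f(\bar{x})$. Writing $t_k=\|x_k-\bar{x}\|\to 0$ and $h_k=(x_k-\bar{x})/t_k$ with $\|h_k\|=1$, I would extract a limiting direction $h_k\to h$ and show $h\in H_p(\bar{x})$ by dividing the expansion of each $f_i(x_k)=0$ by $t_k^i$ and passing to the limit to get $f_i^{(i)}(\bar{x})[h]^i=0$ for all $i$. The substantive difficulty is that in an infinite-dimensional $X$ the unit sphere is not compact, so this limiting direction is not automatic; this is precisely where the strong $p$-regularity hypothesis (Definition \ref{5def2}) enters, the uniform bound on $\{\Psi_p(h)\}^{-1}$ over $H_{\alpha}$ supplying the normalization needed to produce $h\in H_p(\bar{x})$.

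Finally I would combine the first- and second-order data along $h$. Using the stationarity relation $f'(\bar{x})=-\Psi_p(h)^{\ast}\bar{\lambda}(h)$ together with the feasibility $f_i(x_k)=0$, the increment $f(x_k)-f(\bar{x})$ can be rewritten, modulo $o(t_k^2)$, as the second variation of the modified $p$-factor-Lagrange function $\bar{\mathcal{L}}_p$; the weights $\tfrac{2}{i(i+1)}$ in \eqref{Lbar} are engineered so that the block contributions assemble into $\tfrac{1}{2}t_k^2\,\bar{\mathcal{L}}_{pxx}(\bar{x},\bar{\lambda}(h),h)[h]^2$. The coercivity hypothesis \eqref{5eq3} then forces $f(x_k)-f(\bar{x})\ge \tfrac{\alpha}{2}t_k^2\|h\|^2+o(t_k^2)>0$ for large $k$, contradicting $f(x_k)\le f(\bar{x})$. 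I expect the delicate point of this last step to be the bookkeeping verifying that the coefficients $\tfrac{2}{i(i+1)}$ indeed produce exactly this quadratic form once all mixed-order Taylor terms of $f$ and of the $f_i$ along the feasible sequence are collected.
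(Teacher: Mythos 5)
Note first that the paper contains no proof of Theorem \ref{5th7} (it is imported from \cite{Tr84}), so your argument must stand on its own; as written, it does not. Your reduction is sound up to a point: rewriting \eqref{5eq2} as $f'(\bar{x})\in\operatorname{Im}\Psi_p(h)^{\ast}=(\Ker\Psi_p(h))^{\perp}$ via the closed range theorem is correct, and so is the symmetry argument giving $f'(\bar{x})[h]=0$ (with the caveat that Theorem \ref{5th1} as stated assumes complete degeneracy and $p$-regularity in all directions, whereas here you only have $p$-regularity along $h$; the single-direction inclusion $h\in T_1M(\bar{x})$ does hold, but needs the one-directional version of that result). The genuine gap is the feasible curve $x(t)=\bar{x}+th+t^{2}\eta+r(t)$: the scaling $t^{2}$ is exactly the wrong one, and the argument collapses at order $t^{2}$ for two independent reasons. (i) You cannot have $\|r(t)\|=o(t^{2})$: after the cancellations you list, block $i$ of the constraint still contains $\frac{t^{i+1}}{(i+1)!}f_i^{(i+1)}(\bar{x})[h]^{i+1}$, which no hypothesis kills, and the only term available to absorb it is $\frac{t^{i-1}}{(i-1)!}f_i^{(i)}(\bar{x})[h]^{i-1}[r(t)]$; feasibility therefore forces $\|r(t)\|$ to be of order $t^{2}$, so $f'(\bar{x})[r(t)]$ pollutes the expansion at order $t^{2}$. (ii) Even granting $r(t)=o(t^{2})$, your expansion of $f$ drops $\frac{1}{2}t^{2}f''(\bar{x})[h]^{2}$, which is of the same order as the term $t^{2}f'(\bar{x})[\eta]$ you are trying to isolate ($f$ is only $C^2$ and nothing makes $f''(\bar{x})[h]^2$ vanish). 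What minimality actually yields is $\bigl|f'(\bar{x})[\eta]\bigr|\le\frac{1}{2}f''(\bar{x})[h]^{2}+\limsup_{t\to 0}t^{-2}\bigl|f'(\bar{x})[r(t)]\bigr|$, a second-order inequality, not the identity $f'(\bar{x})[\eta]=0$. The standard repair is an intermediate scaling: argue by contradiction, pick $\eta\in\Ker\Psi_p(h)$ with $f'(\bar{x})[\eta]<0$, and use $x(t)=\bar{x}+th+t^{1+\delta}\eta+r(t)$ with $0<\delta<1$; the defect analysis then gives $\|r(t)\|=O(t^{1+2\delta}+t^{2})=o(t^{1+\delta})$, the $f''$ term is $O(t^{2})=o(t^{1+\delta})$, and $f(x(t))-f(\bar{x})=t^{1+\delta}f'(\bar{x})[\eta]+o(t^{1+\delta})<0$ contradicts minimality.

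The sufficiency half is also not yet a proof. Extracting a convergent direction from the unit vectors $(x_k-\bar{x})/\|x_k-\bar{x}\|$ is impossible in infinite-dimensional $X$, and strong $p$-regularity is not a compactness substitute: you invoke it for ``normalization'' without saying what operation that is. The way this is handled in the $p$-regularity literature is to avoid any limit: the uniform bound on $\{\Psi_p(\cdot)\}^{-1}$ over $H_\alpha$ from Definition \ref{5def2}, combined with Lyusternik-type distance estimates as in Theorem \ref{5th3}, lets one replace each normalized increment by a nearby exact element $h_k\in H_p(\bar{x})$ with $o(1)$ error, and then \eqref{5eq2} and \eqref{5eq3} are applied along each $h_k$ separately. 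Finally, the step you explicitly defer --- showing that feasibility plus stationarity assemble $f(x_k)-f(\bar{x})$ into the quadratic form $\bar{\mathcal{L}}_{pxx}(\bar{x},\bar{\lambda}(h_k),h_k)$ with precisely the weights $\frac{2}{i(i+1)}$ of \eqref{Lbar} --- is the actual technical content of the sufficiency theorem, not bookkeeping; without it the coercivity hypothesis \eqref{5eq3} never gets connected to the sequence $x_k$, and no contradiction is produced.
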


\begin{example}
\label{5ex8}

 Continue consideration of problem \eqref{5eq4} from \ref{sub451}.
 It is easy to verify that the point $\bar{x}=0$ is a local minimum to
\ref{5eq4}.

We have shown in  \ref{5ex4} that mapping $F(x)=\left(%
                                                                            \begin{array}{c}
                                                                            x_1^2-x_2^2+x_3^2 \\
                                                                            x_1^2-x_2^2+x_3^2+x_2x_3 \\
                                                                            \end{array}%
                                                                            \right)$ is $2$-regular at $\bar{x}$ along vector $h=(1,1,0)^T$. 

In this example,  the
$2$-factor-Lagrange function  with $\lambda(h)=(\lambda_1(h),\lambda_2(h))$,
$$\mathcal{L}_2(x,\lambda (h),h)=  f(x)+  \langle \lambda_1(h), \, f_1(x) \rangle +    \langle \lambda_2(h), \, f_2^\prime (x)[h]
      \rangle,$$
 has the form
 $$
    \mathcal{L}_2(x,\lambda(h),h)=x_2^2+x_3+\alpha(x_1-x_2)+\beta(x_1-x_2+\dfrac12x_3),
   $$
   where    $\lambda_1(h)=(0,0)^T$,  and $\lambda_2(h)=(\alpha,\beta)^T.$

Solving the equation $$\mathcal{L}'_{2\,x}(\bar{x},\lambda(h),h)=0,$$
we obtain the following system:
\begin{eqnarray*}
\alpha+\beta& = & 0\\
2x_2^* - \alpha -\beta& = & 0\\
1+ \dfrac12 \beta& = & 0
\end{eqnarray*}
Then with $\bar{x}_2 =0$, we get $\beta=-2$ and $\alpha = 2$. Hence, function $  \bar{\mathcal{L}}_2(x,\lambda (h),h) $ defined in \eqref{Lbar} will have the following form in this example:
$$\bar{\mathcal{L}}_2(x,\lambda (h),h) = x_2^2+x_3 +  \dfrac23 (x_1-x_2) -\dfrac23(x_1-x_2+\dfrac12x_3)= x_2^2+\dfrac23 x_3 .$$

Recall that the set $H_2(\bar{x})$ was determined in \ref{5ex4} and is defined as 
$$H_2(\bar{x}) = \operatorname{span}\left( \left[ \begin{array}{c} 1\\ -1 \\0 \end{array} \right] \right) \bigcup \operatorname{span}\left( \left[ \begin{array}{c} 1\\ 1 \\0 \end{array} \right] \right).$$
Then
 $$\bar{\mathcal{L}}''_{2\,xx}(\bar{x},\lambda(h),h)[h]^2=2\geq \alpha \|h\|^{2}$$
 for some $\alpha>0$ and 
for every $h\in H_2(\bar{x})$.
Hence, the sufficient conditions in Theorem \ref{5th7} hold and we conclude that $\bar{x}$ is a strict local minimizer in problem \ref{5eq4}.
\end{example}

\subsection{Modified Lagrange function method}
\label{sub3.6}

\subsubsection{Modified Lagrange function method}
\label{sub3.61}

Consider the following constrained optimization problem
\begin{equation}
\label{3eq9}
\min f(x) \quad  \hbox{subject to } \quad g_i(x)\leq 0, \quad i=1,\ldots,m,
\end{equation}
where $f:\mathbb{R}^n\rightarrow \mathbb{R}$, $g_i:\mathbb{R}^n\rightarrow \mathbb{R}$
and the modified Lagrangian function $L_E(x,\lambda)$, $L_E:\mathbb{R}^{n+m}\rightarrow \mathbb{R}$ associated with problem \eqref{3eq9} is defined as follows (see e.g. \cite{BrEvTr06,Ev77}, cf.  \cite{Be16}):
\begin{equation}
\label{LE}
L_E(x,\lambda)=f(x)+\dfrac{1}{2}\sum_{i=1}^m \lambda_i^2g_i(x),
\end{equation}
where $\lambda = ( \lambda_1, \ldots,  \lambda_m)$.
This modified Lagrangian function  is used to replace the nonlinear optimization problem with a system of nonlinear equations.
Define mapping $G:\mathbb{R}^n\times \mathbb{R}^m\rightarrow \mathbb{R}^{n+m}$ by 
\begin{equation}
\label{3eq10}
G(x,\lambda)=\left(%
\begin{array}{c}
\nabla f(x)+ \dfrac{1}{2}\sum\limits_{i=1}^m \lambda_i^2 \nabla g_i(x)\\
D(\lambda) g(x) \\
\end{array}%
\right),
\end{equation}
where $D(\lambda)= \hbox{diag}\{\lambda_i\},$ $i=1,\ldots,m$, $\lambda\in \mathbb{R}^m$.

Consider the equation
\begin{equation}
\label{3eq11}
G(x,\lambda)=0_{n+m}.
\end{equation}
Let $g^\prime (x)$ be the Jacobi matrix of the mapping $g(x)$. 
Then the  Jacobian matrix $G'(x,\lambda)$ of mapping $G(x,\lambda)$ is given by
\begin{equation}
\label{MG}
G'(x,\lambda)=\left(%
\begin{array}{cc}
\nabla^2 f(x)+ \dfrac{1}{2}\sum\limits_{i=1}^m \lambda_i^2 g_i^{\prime \prime}(x)&(g'(x))^T D(\lambda) \\
D(\lambda) (g^\prime(x))^T& D(g(x)) \\
\end{array}%
\right).
\end{equation}

  Define the set $I(\bar{x})=\{j=1,\ldots, m \mid g_j(\bar{x})=0\}$ of active constraints, the set $$I_0(\bar{x})=\{j=1,\ldots, m \mid \bar{\lambda}_j=0, g_j(\bar{x})=0\}$$ of weakly active constraints, and the set $I_+(\bar{x})=I(\bar{x})\setminus I_0(\bar{x})$ of strongly active constraints.

If $(\bar{x},\bar{\lambda})$ is a solution  of Problem \eqref{3eq11} such that $g_i(\bar{x})=0$ and
$\bar{\lambda}_i=0$, then the strict complementarity condition (SCQ) fails, that is the set
$I_0(\bar{x})$
is not empty. 
Consequently, $G'(\bar{x},\bar{\lambda})$  is a degenerate matrix. Example \ref{3ex4} illustrates the situation.

\begin{example}\cite{BrEvTr06}
\label{3ex4}
Consider the following problem
\begin{equation}
\label{3eq12}
\min_{x\in \mathbb{R}^n} (x_1^2+x_2^2+4 x_1 x_2) \quad  \hbox{subject to } \quad x_1\geq 0,\; x_2\geq 0.
\end{equation}
It is easy to see that $\bar{x}=(0,0)^T$ is a solution of Problem \eqref{3eq12} with the corresponding Lagrange multiplier $\bar{\lambda}=(0,0)^T$.

The modified Lagrange function in this case is
$$L_E (x,\lambda) =x_1^2+x_2^2+4x_1x_2-\dfrac{1}{2}\lambda_1^2x_1-\dfrac{1}{2}\lambda_2^2x_2.$$
The mapping $G$ is  defined by
$$G(x,\lambda)=\left(%
\begin{array}{c}
2x_1+4x_2-\dfrac{1}{2}\lambda_1^2\\
2x_2+4x_1-\dfrac{1}{2}\lambda_2^2 \\
-\lambda_1x_1\\
-\lambda_2x_2\\
\end{array}%
\right),$$ and, therefore,  the Jacobian matrix $G'(\bar{x},\bar{\lambda})$  defined in \eqref{MG} 
 is singular.
\end{example}

\subsubsection{Modified Lagrange function method for 2-regular problems}
\label{sub5.7}

In this section, we consider  the constrained optimization problem  \ref{3eq9} with the modified Lagrangian function $L_E(x,\lambda)$ defined in \eqref{LE}.
We focus on the nonregular case when the Jacobian matrix $G'(\bar{x},\bar{\lambda})$  
defined in \eqref{MG}  is singular at the solution $(\bar{x},\bar{\lambda})$ of \ref{3eq11}.
 
 We will show that the mapping $G(x,\lambda)$ defined by \ref{3eq10} is 2-regular at $(\bar{x},\bar{\lambda})$.
 
Without loss of generality, assume that $I_0(\bar{x})=\{1,\ldots, s\}$, so that $\bar{\lambda}_j=0$ and $g_j(\bar{x})=0$ for all $j=1,\ldots, s$. 
Additionally, we assume that $I_+(\bar{x}) = \{s+1, s+2, \ldots, p\}$. Introduce the notation $l = m-p$.
Then the rows of matrix $G'(\bar{x},\bar{\lambda})$ with the numbers from $(n+1)$th to $(n+s)$th contain only zeros. 
Define vector $h\in \mathbb{R}^{n+m}$ as follows
\begin{equation}
\label{5eq12}
h^T=\left(0_n^T,1_s^T,0_{m-s}^T\right),
\end{equation}
where $1_s^T$ is an $s$-dimensional  all-one row vector. 

Let mapping $\Phi:\mathbb{R}^n\times \mathbb{R}^m$ be given by
\begin{equation}
\label{5eq13}
\Phi(x,\lambda)=G(x,\lambda)+G'(x,\lambda)h,
\end{equation}
with $h$ is defined in \ref{5eq12}.

The following is a well known result.
\begin{lemma}[\cite{BrEvTr06}]
\label{5lem14}
	Let an $n\times n$ matrix $V$ and an $n\times p$ matrix $Q$ satisfy the properties:
	\begin{enumerate}
	\item $Q$ has linearly independent columns and
	\item
	$x^T Vx >0$ for all ${x\in \Ker   Q^T\setminus\{0\}}$.
	\end{enumerate}
	Assume also that $D_{N}$ is a full rank diagonal $l\times l$ matrix. Then matrix $\bar{A}$ defined by
\begin{equation} \label{barA}
\bar{A}=\left(
	\begin{array}{ccc}
	V & Q & 0 \\
	Q^T & 0 & 0 \\
	0 & 0 & D_N \\
	\end{array}
	\right)
\end{equation}
	is a nonsingular matrix.
\end{lemma}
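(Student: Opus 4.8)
The plan is to exploit the block structure of $\bar{A}$, separating the diagonal block $D_N$ from the saddle-point part. The third block row and the third block column of $\bar{A}$ couple to the rest only through $D_N$, so $\bar{A}$ is genuinely block-diagonal and
$$
\det \bar{A} = \det\begin{pmatrix} V & Q \\ Q^T & 0 \end{pmatrix} \cdot \det D_N .
$$
Since $D_N$ is a full-rank diagonal matrix, $\det D_N \neq 0$, and the problem reduces to showing that the square $(n+p)\times(n+p)$ matrix $K = \begin{pmatrix} V & Q \\ Q^T & 0 \end{pmatrix}$ is nonsingular, equivalently that $\Ker K = \{0\}$.

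To establish $\Ker K = \{0\}$, I would take a pair $(u,v)$ with $u \in \mathbb{R}^n$ and $v \in \mathbb{R}^p$ satisfying $K(u,v)=0$, that is, the two equations $Vu + Qv = 0$ and $Q^T u = 0$. The second equation says $u \in \Ker Q^T$, and it also gives $u^T Q = (Q^T u)^T = 0$. Multiplying the first equation on the left by $u^T$ and using $u^T Q v = 0$ then yields $u^T V u = 0$. By hypothesis (2), the form $x \mapsto x^T V x$ is strictly positive on $\Ker Q^T \setminus \{0\}$; since $u \in \Ker Q^T$ and $u^T V u = 0$, this forces $u = 0$. The first equation now collapses to $Qv = 0$, and hypothesis (1), the linear independence of the columns of $Q$, means $\Ker Q = \{0\}$, whence $v = 0$.

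Thus the only element of $\Ker K$ is the zero vector, so $K$ is nonsingular, and consequently $\bar{A}$ is nonsingular as well. I do not expect a genuine obstacle in this argument: it is the classical reduction for saddle-point (KKT) matrices, and notably it does not require $V$ to be symmetric, since only the scalar implication $u^T V u = 0 \Rightarrow u = 0$ on $\Ker Q^T$ is used. The only step demanding slight care is the observation that the cross term $u^T Q v$ vanishes precisely because $u$ lies in $\Ker Q^T$, which is exactly the subspace on which the sign condition (2) may be invoked; once this is in place, hypotheses (2) and (1) dispatch $u$ and $v$ respectively.
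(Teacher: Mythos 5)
Your proof is correct. The paper itself states this lemma without proof, labelling it ``a well known result'' and citing \cite{BrEvTr06}, so there is no in-paper argument to compare against; your reduction --- splitting off the decoupled block $D_N$ via $\det \bar{A} = \det\begin{pmatrix} V & Q \\ Q^T & 0 \end{pmatrix}\det D_N$ and then running the standard saddle-point kernel argument ($Q^Tu=0$ kills the cross term, hypothesis (2) kills $u$, hypothesis (1) kills $v$) --- is exactly the classical argument such a citation points to, and it is complete, including the correct observation that symmetry of $V$ is never needed.
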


We say that the linear independence constraint qualification (LICQ) holds for the optimization problem  \ref{3eq9} if the gradients of active constraints are linearly independent. 

The second-order sufficient optimality condition holds if there exists $\alpha >0$ such that
\begin{equation}
\label{5eq14}
  z^T \nabla_{xx}^2L_E(\bar{x},\bar{\lambda})z \geq \alpha \|z\|^2
\end{equation}
for all $z\in \mathbb{R}^n$ satisfying the conditions
$$(\nabla g_j(\bar{x}))^T z\leq 0 \quad \forall j\in I(\bar{x}).$$

\begin{lemma}[\cite{BrEvTr06}]
\label{5lem15}
	Let $f, g_i\in \mathcal{C}^3(\mathbb{R}^n)$, $i=1,\ldots,m$. 
	Assume that the LICQ and the second-order sufficient optimality conditions are satisfied at the solution $(\bar{x},\bar{\lambda})$ and $\Phi$ is a mapping given by \ref{5eq13}. 
	Then the 2-factor operator $$ \Phi'(x,\lambda)=G'(x,\lambda)+G''(x,\lambda)h$$ is nonsingular at the point $(\bar{x},\bar{\lambda})$.
\end{lemma}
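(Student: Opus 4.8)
The plan is to compute the $2$-factor operator
$$\Phi'(\bar{x},\bar{\lambda}) = G'(\bar{x},\bar{\lambda}) + G''(\bar{x},\bar{\lambda})h$$
explicitly at the solution and to recognize it, after the natural block partition of the $\lambda$-variables into the index sets $I_0$, $I_+$ and the inactive set, as a matrix of exactly the form $\bar{A}$ appearing in \eqref{barA}. Nonsingularity will then follow immediately from Lemma \ref{5lem14}, so the entire task reduces to producing the correct block form and checking the three hypotheses of that lemma.

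First I would write out $G'(\bar{x},\bar{\lambda})$ from \eqref{MG}. Using $\bar{\lambda}_j=0$ and $g_j(\bar{x})=0$ for the weakly active indices $j\in I_0=\{1,\ldots,s\}$, and $g_j(\bar{x})=0$ for every active index $j\in\{1,\ldots,p\}$, one finds that the rows $n+1,\ldots,n+s$ of $G'(\bar{x},\bar{\lambda})$ vanish entirely; this is precisely the source of the degeneracy. The bottom-right block equals $D(g(\bar{x}))=\diag\{g_i(\bar{x})\}$, whose active diagonal entries are zero and whose inactive entries ($i>p$) are strictly negative.

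The key computation is $G''(\bar{x},\bar{\lambda})h$. Since $h$ in \eqref{5eq12} is supported only on the $\lambda_j$-directions with $j\in I_0$, contracting $G''$ with $h$ amounts to summing $\partial G'/\partial\lambda_j$ over $j=1,\ldots,s$. Differentiating the blocks in \eqref{MG}, the top-left term yields $\bar{\lambda}_j g_j''(\bar{x})=0$, the bottom-right term yields nothing, and the off-diagonal terms place $\nabla g_j(\bar{x})$ in the $j$-th column of the top-right block and $(\nabla g_j(\bar{x}))^{T}$ in the $j$-th row of the (symmetric) bottom-left block. Adding this to $G'(\bar{x},\bar{\lambda})$ thus fills the rows and columns associated with $I_0$ with the active gradients. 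Ordering the $\lambda$-block as $(I_0,I_+,\text{inactive})$, I expect
$$\Phi'(\bar{x},\bar{\lambda}) = \begin{pmatrix} V & Q & 0 \\ Q^{T} & 0 & 0 \\ 0 & 0 & D_N \end{pmatrix},$$
where $V=\nabla_{xx}^2 L_E(\bar{x},\bar{\lambda})$, the columns of the $n\times p$ matrix $Q$ are $\nabla g_j(\bar{x})$ for $j\in I_0$ together with $\bar{\lambda}_j\nabla g_j(\bar{x})$ for $j\in I_+$, and $D_N=\diag\{g_i(\bar{x})\}_{i>p}$. It then remains to verify the hypotheses of Lemma \ref{5lem14}: $Q$ has independent columns because each is a nonzero scalar multiple of an active gradient and the active gradients are independent by the LICQ; $z^{T}Vz>0$ on $\Ker Q^{T}\setminus\{0\}$ because $z\in\Ker Q^{T}$ forces $(\nabla g_j(\bar{x}))^{T}z=0$ for all active $j$, so the second-order sufficient condition \eqref{5eq14} applies; and $D_N$ has full rank since $g_i(\bar{x})<0$ for inactive $i>p$. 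The main obstacle is the bookkeeping in assembling $\Phi'$: one must track carefully that the $I_+$-columns $\bar{\lambda}_j\nabla g_j$ (surviving in $G'$) and the $I_0$-columns $\nabla g_j$ (created by $G''h$) combine into a single full-column-rank block $Q$, and that the symmetry between the off-diagonal blocks is preserved throughout.
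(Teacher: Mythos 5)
Your proposal is correct and takes essentially the same route as the paper: the paper's proof consists precisely of identifying $\Phi'(\bar{x},\bar{\lambda})$ with the matrix $\bar{A}$ of \eqref{barA}, with $V=\nabla_{xx}^2L_E(\bar{x},\bar{\lambda})$, $Q=\left[\nabla g_1(\bar{x}),\ldots,\nabla g_s(\bar{x}),\bar{\lambda}_{s+1}\nabla g_{s+1}(\bar{x}),\ldots,\bar{\lambda}_p\nabla g_p(\bar{x})\right]$, $D_N=D(g_N(\bar{x}))$, and then invoking Lemma \ref{5lem14}. Your explicit computation of $G''(\bar{x},\bar{\lambda})h$ and your verification of the three hypotheses of Lemma \ref{5lem14} (independence of the columns of $Q$ via the LICQ, positivity of $V$ on $\Ker Q^{T}\setminus\{0\}$ via \eqref{5eq14}, and full rank of $D_N$ from $g_i(\bar{x})<0$ for inactive $i$) simply fills in the bookkeeping that the paper leaves implicit.
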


The proof of \ref{5lem15} can be obtained from  \ref{5lem14}.

Indeed, if matrix $D(\lambda)$ is a diagonal matrix with $\lambda_j$ being the $j$-th diagonal entry,
$$V=\nabla_{xx}^2L_E(\bar{x},\bar{\lambda}), \quad D_N=D(g_N(\bar{x})), \quad g_N(x)=\left(g_{p+1}(x),\ldots,g_m(x)\right)^T,
$$
and
$$
Q=\left[\nabla g_1(\bar{x}), \ldots, \nabla g_s(\bar{x}), \bar{\lambda}_{s+1}\nabla g_{s+1}(\bar{x}),\ldots,  \bar{\lambda}_{p}\nabla g_{p}(\bar{x})\right],$$
then $\Phi'(\bar{x},\bar{\lambda})=\bar{A}$, where matrix $\bar{A}$ is defined in \eqref{barA}.
\ref{5lem15} implies that the 2-factor-Newton method given by
\begin{equation}
\label{5eq15}
w^{k+1}=w^k-\left(G'(w^k)+G''(w^k)h\right)^{-1}\left(G(w^k)+G'(w^k)h\right), \quad k=0,1,\ldots
\end{equation}
can be applied to solve system \ref{3eq11}. As a result, we have the following proposition.

\begin{theorem}[\cite{BrEvTr06}]
\label{5th16}
	Let $\bar{x}$ be a solution to \ref{3eq9} and $f, g_i(x)\in \mathcal{C}^3(\mathbb{R}^n)$, $i=1,\ldots,m$. Assume that the  LICQ and the second-order sufficient optimality conditions \ref{5eq14} are satisfied at the point $\bar{x}$. Then there exists a sufficiently small neighborhood $B_{\varepsilon}(\bar{w})$ of $\bar{w}=(\bar{x},\bar{\lambda})$ such that  the estimate
	$$\|w^{k+1}-\bar{w}\|\leq \beta \|w^k-\bar{w}\|^2,$$
holds for the method \ref{5eq15}, where $w^0\in B_{\varepsilon}(\bar{w})$ and $\beta>0$ is an independent constant.
\end{theorem}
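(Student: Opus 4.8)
The plan is to recognize the iteration \ref{5eq15} as the classical Newton method applied to an auxiliary regularized system, and then to reduce everything to the nonsingularity already supplied by \ref{5lem15}. Introduce the mapping $\Phi$ from \ref{5eq13}, namely $\Phi(w)=G(w)+G'(w)h$ with $h$ given by \ref{5eq12}. Differentiating gives $\Phi'(w)=G'(w)+G''(w)h$, so the update \ref{5eq15} reads exactly $w^{k+1}=w^k-[\Phi'(w^k)]^{-1}\Phi(w^k)$; that is, \ref{5eq15} is ordinary Newton's method for the system $\Phi(w)=0$. Thus the entire argument reduces to showing that $\bar{w}=(\bar{x},\bar{\lambda})$ is a \emph{regular} root of $\Phi$ and then invoking the standard local convergence estimate.

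First I would verify that $\bar{w}$ is a root of $\Phi$. Since $\bar{x}$ solves \ref{3eq9} and the LICQ holds, $\bar{w}$ is the associated KKT point and solves \ref{3eq11}, so $G(\bar{w})=0$; it remains to check $G'(\bar{w})h=0$. By \ref{5eq12}, $h$ selects exactly the columns of $G'(\bar{w})$ indexed by the weakly active multipliers $j\in I_0(\bar{x})=\{1,\dots,s\}$. Reading off \ref{MG} at $\bar{w}$, the $(n+j)$-th column equals $(\bar{\lambda}_j\nabla g_j(\bar{x}),\,g_j(\bar{x})e_j)^T$, and both entries vanish because $\bar{\lambda}_j=0$ and $g_j(\bar{x})=0$ for every weakly active index. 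Hence these columns are zero, $G'(\bar{w})h=0$, and therefore $\Phi(\bar{w})=G(\bar{w})+G'(\bar{w})h=0$. This is the conceptual heart of the construction: the vector $h$ is tailored so that the regularizing term $G'(w)h$ does not move the solution, while simultaneously filling in the degenerate rows of $G'(\bar{w})$.

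Next I would establish regularity of $\Phi$ at $\bar{w}$, which is precisely the content of \ref{5lem15}: under the LICQ and the second-order sufficient conditions \ref{5eq14}, the $2$-factor operator $\Phi'(\bar{w})=G'(\bar{w})+G''(\bar{w})h$ is nonsingular (via the block-matrix criterion \ref{5lem14}). By continuity of $\Phi'$ and stability of bounded invertibility, $[\Phi'(w)]^{-1}$ then exists and is uniformly bounded on a sufficiently small ball $B_\varepsilon(\bar{w})$. I would also record the smoothness bookkeeping: $f,g_i\in\mathcal{C}^3$ gives $G\in\mathcal{C}^2$, hence $\Phi\in\mathcal{C}^1$; moreover, because $h$ has nonzero components only in the $\lambda$-directions, the term $G''(w)h$ differentiates $G'$ only with respect to the multipliers and so never calls on third-order derivatives in $x$. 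Consequently $\Phi'=G'+G''h$ is itself $\mathcal{C}^1$, and in particular locally Lipschitz near $\bar{w}$.

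Finally I would run the standard Newton contraction estimate. Using $\Phi(\bar{w})=0$ and the integral form of the remainder,
\[
 w^{k+1}-\bar{w}=[\Phi'(w^k)]^{-1}\int_0^1\bigl[\Phi'(w^k)-\Phi'(\bar{w}+t(w^k-\bar{w}))\bigr](w^k-\bar{w})\,dt,
\]
the local Lipschitz bound on $\Phi'$ controls the integrand by a constant multiple of $(1-t)\|w^k-\bar{w}\|$, while the uniform bound on $\|[\Phi'(w^k)]^{-1}\|$ over $B_\varepsilon(\bar{w})$ controls the prefactor. Integrating yields $\|w^{k+1}-\bar{w}\|\le\beta\|w^k-\bar{w}\|^2$ for all $w^0\in B_\varepsilon(\bar{w})$, which is the claimed estimate; equivalently, since \ref{5eq15} is the instance of the $2$-factor Newton scheme whose factor matrix $\Phi'(\bar{w})$ is nonsingular, the bound is also an application of \ref{5th9} with $p=2$. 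I expect the main obstacle to be the bookkeeping in the second paragraph: one must confirm that $h$ simultaneously annihilates $G'(\bar{w})h$ (so the root is preserved) and regularizes the Jacobian (so \ref{5lem15} applies). Once this is in place, the convergence estimate is the routine Newton argument, and the only delicate smoothness point is the observation that $G''h$ avoids third-order $x$-derivatives, so that $\mathcal{C}^3$ data already yield a locally Lipschitz $\Phi'$.
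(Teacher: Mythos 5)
Your proposal is correct and follows essentially the same route as the paper, which itself only sketches the argument: identify the iteration \eqref{5eq15} as Newton's method for $\Phi(w)=G(w)+G'(w)h=0$, obtain nonsingularity of $\Phi'(\bar{w})$ from Lemma \ref{5lem15} (itself a consequence of Lemma \ref{5lem14}), and conclude by the standard local quadratic-convergence estimate. Your additional details --- the verification that $G'(\bar{w})h=0$ (so that $\bar{w}$ remains a root of $\Phi$) and the observation that $G''(w)h$ involves no third-order $x$-derivatives, so that $\mathcal{C}^3$ data suffice for a locally Lipschitz $\Phi'$ --- are correct fill-ins of steps the paper delegates to the cited reference.
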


There are publications of other authors where a modified Lagrange function is used in various contexts, for example,
\cite{Antipin, Zhil}.

\subsection{Calculus of variations}
\label{sub3.5}

The methods of the calculus of variations are used as a tool in solving many problems in physics as well as in classical mechanics. Since the classical approach cannot be directly applied to many of these problems, there is a need to extend or reformulate the classical theorems for some irregular cases. Different kinds of irregular problems of calculus of variations have been studied widely over the years in mathematics and applications (see e.g. \cite{BradEver1973,MR4393590,Io2000,ioffe,MR3727108,KonKunOber2008,LecBagGio2019,GelFom1963,Sival1992,Tuckey1993}).

\subsubsection{Singular problems of calculus of variations}
In this section we consider the following Lagrange problem (see \cite{PrSzTr13})
\begin{equation}\label{3eq3}
J_{0}(x)=\int_{t_1} ^{t_2} f(t,x,x^{\prime}) \mathrm{d}t\rightarrow \min
\end{equation}
subject to the subsidiary conditions
\begin{equation}\label{3eq4}
G(x)=\tilde{G}(t,x,x^{\prime})=0, \;\;\;  q(x(t_1),x(t_2))=0 ,
\end{equation}
where 
$G:X\rightarrow Y,$ $G\in \mathcal{C}^{p+1}(X),$ $X=\mathcal{C}^1([t_1,t_2],\mathbb{R}^n),$ $x \in  X$, 
$Y=\mathcal{C}^1([t_1,t_2],\mathbb{R}^m), $
$\tilde{G}(t,x,x^{\prime})=(G_1(t,x,x^{\prime}),\ldots,G_m(t,x,x^{\prime})),$ and
$q: \mathbb{R}^n \times \mathbb{R}^n\rightarrow
\mathbb{R}^k .$  We assume that all mappings and their
derivatives are continuous with respect to the corresponding variables
$ t,$ $x,$ and $x'$.

We denote by $\bar{x}(t)$ a solution to Problem \eqref{3eq3}--\eqref{3eq4}.
In the regular case, when $ \operatorname{Im}  G^{\prime}(\bar{x})=Y,$ the
Euler-Lagrange necessary conditions are satisfied and have the form (see e.g. \cite{Bellman1985,GelFom1963}):
\begin{equation}
\label{3eq5}
f_x+\lambda(t)G_x-\dfrac{d}{dt}(f_{x^{\prime}}+\lambda(t)G_{x^{\prime}})=0 \quad \hbox{for all } t\in [t_1,t_2].
\end{equation}

Let $\lambda=(\lambda_1,\ldots,\lambda_m)^T,\ $ then
$$\lambda(t)G=\lambda_1(t)G_1+\cdots+\lambda_m(t)G_m \quad \mbox{and} \quad 
\lambda(t)G_x=\lambda_1(t)G_{1x}+\cdots+\lambda_m(t)G_{mx}.$$
In the singular case, when
$
\operatorname{Im}  G'(\bar{x})\neq Y,
$
we can only guarantee that the following equations are satisfied:
\begin{equation}
\label{3eq6}
\lambda_0 f_x+\lambda(t)G_x-\dfrac{d}{dt}(\lambda_0f_{x'}+\lambda(t)G_{x'})=0 ,
\end{equation}
where $\lambda_0^2+\|\lambda (t)\|^2=1.$ In this case, $\lambda_0$
might be equal to $0$, which gives no 
constructive conditions for description or finding $\bar{x}(t)$.

\begin{example}[\cite{PrSzTr13}]
\label{3ex3}
	Consider the following simple problem
	\begin{equation}
\label{3eq7}
	J_{0}(x)=\int_{0} ^{2\pi} (x_1^2+x_2^2+x_3^2+x_4^2+x_5^2)
	\mathrm{d}t\rightarrow \min
	\end{equation}
	subject to
	\begin{equation}
\label{3eq8}
	G(x)=\left( \begin{array}{c}
	x_1^{\prime}-x_2+x_3^2x_1+x_4^2x_2-x_5^2(x_1+x_2) \\
	x_2^{\prime}+x_1+x_3^2x_2-x_4^2x_1-x_5^2(x_2-x_1) \\
\end{array}
	\right)=0,
	\end{equation}
	$$x_i(0)=x_i(2\pi), \quad i=1,\ldots,5. $$ Here  $
	f(x)=x_1^2+x_2^2+x_3^2+x_4^2+x_5^2$ and  $q_i(x(0),x(2\pi))=x_i(0)-x_i(2\pi),$ $i=1,\ldots,5.$
	The solution of Problem \eqref{3eq7}--\eqref{3eq8} is $\bar{x}(t)=0$ and
	$G^{\prime}(\bar{x}(t))$ is singular.
	Indeed, $$
	G^{\prime}(0)=\left(%
	\begin{array}{c}
	(\cdot)^{\prime}_1-(\cdot)_2 \\[2mm]
	(\cdot)^{\prime}_2+(\cdot)_1 \\
	\end{array}%
	\right)
\quad
	\mbox{and}
\quad
	G^{\prime}(0)x=\left(%
	\begin{array}{c}
	x^{\prime}_1-x_2 \\
	x^{\prime}_2+x_1 \\
	\end{array}%
	\right).
	$$
	
	Let $z(t)=x_1(t)$ and consider the following equivalent
	problem. Determine whether the following mapping is surjective or not: $$ \tilde{G}(z)=z^{\prime\prime}+z, \;\;\;z(0)=z(2\pi).$$
	
	It is obvious that for $y\in \mathcal{C}[0,2\pi],$ satisfying 
	$$ \int_{0} ^{2\pi}\sin \tau\ y(\tau)\mathrm{d}\tau\neq 0 \; \hbox{ or }\; \int_{0} ^{2\pi}\cos \tau\ y(\tau)\mathrm{d}\tau\neq 0,$$ the equation
	$z^{\prime\prime}+z=y$ does not have a solution.
	
	The corresponding Euler-Lagrange equations in
	this case are as follows:
	\begin{eqnarray*}\label{eq20}
		% \nonumber % Remove numbering (before each equation)
		2\lambda_0x_1+\lambda_2-\lambda'_1+\lambda_1x_3^2+\lambda_2 x_5^2-\lambda_2 x_4^2 &=& 0, \\
		2\lambda_0x_2-\lambda_1-\lambda'_2+\lambda_1x_4^2+\lambda_2x_3^2-\lambda_1x_5^2-\lambda_2x_5^2 &=& 0, \\
		2\lambda_0x_3+2\lambda_1x_1x_3+2\lambda_2x_2x_3 &=& 0, \\
		2\lambda_0x_4+2\lambda_1x_2x_4- \lambda_2x_1x_4 &=& 0, \\
		2\lambda_0x_5-2\lambda_1x_5x_1- 2\lambda_1x_2x_5-2\lambda_2x_2x_5+2\lambda_2x_1x_5 &=& 0, \\
		\lambda_i(0)=\lambda_i(2\pi),\;\; i=1,2.& &
	\end{eqnarray*}
	Unfortunately, we cannot guarantee that $\lambda_0\neq 0$. For
	$\lambda_0 = 0$, we obtain a series of spurious
	solutions to the system \eqref{3eq7}--\eqref{3eq8}:
	$$ x_1=a\sin t, \ x_2=a\cos t, \quad x_3= x_4=x_5=0,\quad \lambda_1=b\sin t,\quad \lambda_2=b\cos t,\quad a,b \in
	\mathbb{R}.$$
\end{example}

\subsubsection{Optimality conditions for $p$-regular problems of calculus of variations}
\label{sub5.6}

To formulate optimality conditions for singular problems of the form \ref{3eq3}--\ref{3eq4}
 we define the $p$-factor Euler-Lagrange function by
$$S(x) =f(x)+\lambda(t)G^{(p-1)}(x)[h]^{p-1},$$
where\\
$$G^{(p-1)}(x)[h]^{p-1}=g_1(x)+g'_2(x)[h]+\ldots +g_p^{(p-1)}(x)[h]^{p-1} ,$$
$$\lambda(t)G^{(p-1)}(x)[h]^{p-1}=\left\langle \lambda(t),\left(g_1(x)+g'_2(x)[h]+\ldots+g_p^{(p-1)}(x)[h]^{p-1}\right)\right\rangle,$$
$$\lambda(t)=(\lambda_1(t),\ldots,\lambda_m(t))^{T}.$$ 

Functions $g_i(x),$ $i=1,\ldots,p$, are determined for the mapping $G(x)$ in a way that is similar to how functions $f_i(x),$ $i=1,\ldots,p,$ 
are defined for the mapping $F(x),$ in \ref{4eq2}. Namely, $$g_k(x)=P_{Y_k}G(x), \quad k=1,\ldots,p.$$

Let
$$g_k^{(k-1)}(x)[h]^{k-1}= \sum_{i+j=k-1}C_{k-1}^i {g_k^{(k-1)}}_{x^i(x')^j}(x)[h]^i[h']^{j},\quad k=1,\ldots,p,$$
where
${g_k^{(k-1)}}_{x^i(x')^{j}}(x)={g_k^{(k-1)}}_{\underbrace{x\ldots x}_i \underbrace{x'\ldots x'}_j}(x).$

\begin{definition}
\label{5def12}
We say that problem \ref{3eq3}--\ref{3eq4} is $p$-regular at the point $\bar{x}$ along some vector $h\in \bigcap\limits_{k=1}^{p}\Ker^k g_k^{(k)}(\bar{x}),$ $\|h\|\neq 0$, if $$\operatorname{Im}  \left(g'_1(\bar{x})+\ldots+g_p^{(p)}(\bar{x})[h]^{p-1}\right)= \mathcal{C}_m[t_1,t_2].$$
\end{definition}

\begin{theorem}[\cite{PrSzTr13}]
\label{5th13}
Let $\bar{x}(t)$ be a solution of problem \ref{3eq3}--\ref{3eq4}, which is $p$-regular at $\bar{x}$ along
$h\in \bigcap\limits_{k=1}^{p}\Ker^k g_k^{(k)}(\bar{x}).$ Then there exists a multiplier $\hat{\lambda}(t)=(\hat{\lambda}_1(t),\ldots,\hat{\lambda}_m(t))^{T}$ such that the following $p$-factor Euler-Lagrange equation holds:
\begin{equation}
\label{5eq10}
\begin{array} {l}
  S_x(\bar{x}) -\dfrac{d}{dt}S_{x'}(\bar{x})=f_x(\bar{x})+\left\langle\hat{\lambda},\sum_{k=1}^{p}\sum_{i+j=k-1}C_{k-1}^i g_{x^i(x')^j}^{(k-1)}(\bar{x})[h]^i(h')^{j}\right\rangle_{x}- \\[4mm]
 \hspace*{15mm} -\dfrac{d}{dt}\left[f_{x'}(\bar{x})+\left\langle \hat{\lambda}(t),\sum_{k=1}^{p}\sum_{i+j=k-1}C_{k-1}^i g_{x^i(x')^j}^{(k-1)}(\bar{x})[h]^i(h')^{j}\right\rangle_{x'}\right] =0,\\[4mm]
 \hspace*{15mm}  \lambda_i(0)=\lambda_i(2\pi),\;\; i=1,2. 
\end{array}
\end{equation}
\end{theorem}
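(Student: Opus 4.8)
The plan is to regard the Lagrange problem \eqref{3eq3}--\eqref{3eq4} as a concrete instance of the abstract equality-constrained problem \eqref{phi-min}, with $X=\mathcal{C}^1([t_1,t_2],\mathbb{R}^n)$, $Y=\mathcal{C}_m[t_1,t_2]$, objective $f=J_0$ and constraint operator $F=G$, and then to apply the $p$-regular necessary optimality conditions of Theorem \ref{5th7}, followed by the classical machinery of the calculus of variations to pass from abstract stationarity to a pointwise differential equation.

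First I would check that the hypotheses of Theorem \ref{5th7} hold along the chosen direction $h$. The assumption that \eqref{3eq3}--\eqref{3eq4} is $p$-regular at $\bar{x}$ along $h$ in the sense of Definition \ref{5def12} is exactly the surjectivity $\operatorname{Im}\Psi_p(h)=\mathcal{C}_m[t_1,t_2]=Y$, where $\Psi_p(h)=g_1'(\bar{x})+\ldots+g_p^{(p)}(\bar{x})[h]^{p-1}$, together with $h\in\bigcap_{k=1}^p\Ker^k g_k^{(k)}(\bar{x})=H_p(\bar{x})$; since this image is all of $Y$ it is in particular closed. The Generalized Lyusternik Theorem \ref{5th1} identifies $H_p(\bar{x})$ with the tangent cone $T_1 M(\bar{x})$ to the feasible set, so that $h$ is a genuine admissible variation along which the minimality of $\bar{x}$ forces first-order stationarity of the $p$-factor Lagrange function.

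Applying the necessary part of Theorem \ref{5th7} then produces a multiplier $\hat\lambda\in Y^*$ with $\mathcal{L}'_{px}(\bar{x},\hat\lambda,h)=0$, where in the present setting the $p$-factor Lagrange function is the integral functional $\mathcal{L}_p(x,\lambda,h)=J_0(x)+\int_{t_1}^{t_2}\langle\lambda(t),\,G^{(p-1)}(x)[h]^{p-1}\rangle\,dt=\int_{t_1}^{t_2} S(x)\,dt$ with integrand $S=f+\lambda(t)G^{(p-1)}(x)[h]^{p-1}$; this identification uses the natural pairing on $\mathcal{C}_m[t_1,t_2]$ and the fact that $G^{(p-1)}(x)[h]^{p-1}=\sum_{k=1}^p g_k^{(k-1)}(x)[h]^{k-1}$ matches the abstract sum $\sum_i\langle\lambda_i,f_i^{(i-1)}(x)[h]^{i-1}\rangle$. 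The remaining task is the calculus of variations: compute the Gateaux derivative of $\mathcal{L}_p$ in an admissible direction, expand each $g_k^{(k-1)}(x)[h]^{k-1}$ by the Leibniz/multilinear rule into its $x$- and $x'$-components (which generates the coefficients $C_{k-1}^i$ and the split $[h]^i(h')^j$ with $i+j=k-1$), integrate the $x'$-terms by parts, and invoke the fundamental lemma of the calculus of variations. This converts the weak stationarity into the pointwise $p$-factor Euler--Lagrange equation $S_x(\bar{x})-\frac{d}{dt}S_{x'}(\bar{x})=0$ of \eqref{5eq10}, with the boundary terms from the integration by parts yielding the periodicity conditions $\lambda_i(0)=\lambda_i(2\pi)$.

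The step I expect to be the main obstacle is the rigorous passage from abstract stationarity to the pointwise equation in the $\mathcal{C}^1$ setting: unlike a purely finite-dimensional situation, one must ensure that the multiplier $\hat\lambda$ furnished by the abstract theorem is regular enough to be represented by a continuous function $\hat\lambda(t)$, so that the du Bois-Reymond / fundamental-lemma argument applies and the integration by parts is legitimate. Establishing this regularity of the multiplier, together with the careful bookkeeping of the multilinear expansion needed to match the precise form of \eqref{5eq10}, is where the substantive work lies; the reduction via Theorems \ref{5th1} and \ref{5th7} is comparatively routine once $p$-regularity along $h$ has been verified.
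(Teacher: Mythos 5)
Your overall strategy --- view \eqref{3eq3}--\eqref{3eq4} as an instance of \eqref{phi-min}, verify the hypotheses of Theorem \ref{5th7} from Definition \ref{5def12}, extract an abstract multiplier, then run the classical variational machinery (multilinear expansion, integration by parts, fundamental lemma) --- is the natural route, and it is consistent with what the paper gestures at; note, however, that the paper itself gives no proof of Theorem \ref{5th13}, only the remark that the argument is similar to the singular \emph{isoperimetric} problem treated in \cite{BeTr88} and \cite{KoTr02}. Judged on its own, your proposal has a genuine gap at exactly the point you flag and then defer. The necessary condition of Theorem \ref{5th7} produces a multiplier $\bar{\lambda}(h)\in Y^{*}$, where $Y$ here is a space of continuous (vector-valued) functions on $[t_1,t_2]$; by the Riesz representation theorem such a functional is a vector-valued regular Borel measure, not a function $\hat{\lambda}(t)$. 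Your identification of the abstract $p$-factor Lagrange function with the integral functional $\int_{t_1}^{t_2} S(x)\,dt$, i.e.\ writing the pairing $\langle \lambda_i, f_i^{(i-1)}(x)[h]^{i-1}\rangle$ as integration against a density $\lambda(t)\,dt$, therefore assumes precisely what must be proved. In the isoperimetric case of the cited references the constraints are finitely many scalar functionals and the multipliers are real numbers, so this difficulty never arises; for the Lagrange problem, establishing that the measure is absolutely continuous with a representative $\hat{\lambda}(t)$ regular enough that $\frac{d}{dt}\langle\hat{\lambda}(t),\cdot\rangle_{x'}$ makes sense and the integration by parts is legitimate (a du Bois-Reymond type argument exploiting the structure of the differential constraint and of the adjoint of the $p$-factor operator) \emph{is} the substantive content of the theorem. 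A proof that leaves this step open has not proved the theorem.

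A second, smaller gap: problem \eqref{3eq3}--\eqref{3eq4} carries two constraints, the differential equation $G(x)=0$ and the endpoint condition $q(x(t_1),x(t_2))=0$, while your reduction feeds only $G$ into Theorem \ref{5th7}. The endpoint constraint must either be stacked into the constraint operator --- in which case the $p$-regularity hypothesis of Definition \ref{5def12}, which concerns only the $g_k$ and the image space $\mathcal{C}_m[t_1,t_2]$, no longer literally furnishes the hypotheses of Theorem \ref{5th7} --- or be handled by restricting the class of admissible variations; and it is this constraint that generates the boundary conditions on $\hat{\lambda}$ appearing in \eqref{5eq10}. Asserting that these conditions simply fall out of the boundary terms of the integration by parts is not sufficient without that bookkeeping. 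In short: the reduction via Theorems \ref{5th1} and \ref{5th7} is fine as far as it goes, but the two steps you label as ``the substantive work'' are exactly what separates an outline from a proof here.
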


The proof of the above theorem is similar to the one for the singular isoperimetric problem in \cite{BeTr88} or \cite{KoTr02}.

\smallskip

Consider \ref{3ex3}. 

%The mapping $G$ is $2$-regular 
The mapping $G$ is $2$-regular at the point $\bar{x}=(a\sin t, a \cos t, 0,0,0)^T$  along $h=(a\sin t, a \cos t, 1,1,1)^T$. It  means that in that problem $p$ is equal to $2$.

\smallskip

%%%%%%%%%%%%%%%%%%%%%%%%%%%%%%%%%%%%%%%%
Consider the following equation
$$f_x(x)+(G'(x)+P_{Y_2}G''(x)h)^{\ast}\lambda=0 ,$$
which is equivalent to the system of equations
\begin{equation}
\label{5eq11}
    \left\{
                    \begin{array}{l}
                      2x_1-\lambda'_1+\lambda_2=0 \\
                      2x_2- \lambda'_2-\lambda_1=0\\
                      2 x_3+2\lambda_1a\sin t+2\lambda_2 a \cos t=0\\
                      2x_4+2 \lambda_1 a \cos t-2\lambda_2a \sin t=0\\
                      2 x_5+2\lambda_1 a(\cos t-\sin t)+2\lambda_2 a(\sin t-\cos t)=0.\\
                      \lambda_i(0)=\lambda_i(2\pi), \; i=1,2.
                    \end{array}
                  \right.
\end{equation}

One can verify that the following ``false solutions" of \ref{3eq7}--\ref{3eq8},
$$x_1=a\sin t, \; x_2=a\cos t, \;x_3=x_4=x_5=0,$$
do not satisfy the system \ref{5eq11} if $a\neq 0.$ It means that $$x_1=a\sin t, \quad x_2=a\cos t, \quad x_3=x_4=x_5$$ do not satisfy the 2-factor Euler-Lagrange equation \ref{5eq10} from \ref{5th13}. The only solution to \ref{3ex3} is $\bar{x}=(0,0,0,0,0)^T.$
Indeed, the 2-factor Euler-Lagrange equation in this case has the following form for $\bar{x}=(0,0,0,0,0)^T$:
\begin{equation*}
     \left\{
                    \begin{array}{l}
                      -\lambda'_1+\lambda_2=0 \\
                      - \lambda'_2-\lambda_1=0\\
                      2\lambda_1a\sin t+2\lambda_2 a \cos t=0\\
                      2 \lambda_1 a \cos t-2\lambda_2a \sin t=0\\
                      2\lambda_1 a(\cos t-\sin t)+2\lambda_2 a(\sin t-\cos t)=0.\\
                      \lambda_i(0)=\lambda_i(\pi), \; i=1,2,\\
                    \end{array}
                  \right.
\end{equation*}
where the solution is $\bar{\lambda}_i(t)=0$, $i=1,2.$

\subsection{Existence of solutions to nonlinear equations}
\label{sec:Ex}

This section deals with the existence of a solution to the equation of the form $F(x)=0$ in the neighbourhood of a chosen point. A very general situation is considered when the function $F$ acts from Banach space $X$ to Banach space $Y$, and the assumptions concern the properties of its derivatives in the considered neighbourhood. Thus, it is one of the classic problems of nonlinear analysis, with many important applications, especially in the theory of differential equations (c.f. \cite{CabAleTom2019,Bob1990,ZhaChe2006}).
One of the well-known methods used for this problem is Newton's method (see \cite{Reddien78}). The solution of the equation is obtained as the limit of a recursively defined series of approximations. This method is applied to the proof of the first theorem in this section. In particular, the existence of the inverse operator to the derivative of the function at a chosen point is assumed. The next theorem in this section is more general and uses the $p$-factor construction of the operator for functions of the class $\mathcal{C}^{p+1}$. A certain limitation of this construction is the assumption of the existence of continuous projections on subspaces of $Y$ corresponding to successive orders of derivatives of the function $F$.

\subsubsection{Existence of solutions to nonlinear equations in the regular case}
\label{sub3.8}

 Let $X$ and $Y$ be Banach spaces and let
$B_{\varepsilon}(x^{0})=\{x \in X: \|x-x^{0}\|< \varepsilon\}$ be a ball with the center $x^{0}$ and radius $\varepsilon >0$, where
$0<\varepsilon<1.$ Consider a mapping $F:X\rightarrow Y$ and a problem of 
existence of a point $\bar{x}$ such hat $F(\bar{x})=0.$ We know that
equation $F(x)=0$ is solvable and has a solution $\bar{x}$ when the operator
$F'(x^{0})$ is surjective (\cite{DeMa73,KaAk82}).
A modified version of the following theorem was given in \cite{DeMa73}.

\begin{theorem}
\label{3th5}
Let $F\in \mathcal{C}^{2}(B_{\varepsilon}(x^{0}))$ and $\left\|F(x^{0}) \right\| = \eta$ for some constants $\varepsilon>0$ and $\eta >0$, and some point $x_0 \in X$. Assume that there exist $\left[F'(x^{0})\right]^{-1}$ and constants $\delta >0$ and $C>0$ such that
 $\left\|\left[F'(x^{0})\right]^{-1}\right\|= \delta,$ $\sup\limits_{x \in B_{\varepsilon}(x^{0})} \left\| F''(x)\right\| =
C<+\infty$. If, moreover, the following relations hold:
\begin{enumerate}
\item $\;\;\delta  \eta \leq\dfrac{\varepsilon}{2},$ \item
$\;\;\delta C \varepsilon \leq \dfrac{1}{4},$ \item $\;\;C
 \varepsilon \leq \dfrac{1}{2}$,
\end{enumerate}
then the equation $F(x)=0$ has a solution $\bar{x} \in B_{\varepsilon}(x^{0}).$
\end{theorem}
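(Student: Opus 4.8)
The plan is to solve $F(x)=0$ by the modified (simplified) Newton iteration that uses only the inverse operator at the initial point, namely
$$x^{k+1}=x^{k}-\bigl[F'(x^{0})\bigr]^{-1}F(x^{k}),\qquad k=0,1,\dots,$$
started from the given $x^{0}$; this is exactly the scheme matching the hypothesis that only $[F'(x^{0})]^{-1}$ is available. Writing $A=F'(x^{0})$, so that $\|A^{-1}\|=\delta$ and $A(x^{k+1}-x^{k})=-F(x^{k})$, I would establish by induction the two assertions ``$x^{k}\in B_{\varepsilon}(x^{0})$'' and ``$\|x^{k+1}-x^{k}\|\le q\,\|x^{k}-x^{k-1}\|$'' for a fixed factor $q<1$. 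These two must be proved \emph{simultaneously}, since the in-ball property is what licenses the second-order Taylor estimates that produce the contraction, while the contraction is in turn what keeps the iterates inside the ball.

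For the base step, $\|x^{1}-x^{0}\|=\|A^{-1}F(x^{0})\|\le\delta\eta\le\varepsilon/2$ by hypothesis (1), so $x^{1}\in B_{\varepsilon}(x^{0})$. For the inductive step I would expand $F$ to second order about $x^{k-1}$,
$$F(x^{k})=F(x^{k-1})+F'(x^{k-1})(x^{k}-x^{k-1})+R_{k-1},\qquad \|R_{k-1}\|\le\tfrac{C}{2}\|x^{k}-x^{k-1}\|^{2},$$
the remainder being controlled by $\sup_{B_{\varepsilon}(x^{0})}\|F''\|=C$ since the segment lies in the convex ball. Substituting $F(x^{k-1})=-A(x^{k}-x^{k-1})$ turns this into $F(x^{k})=\bigl(F'(x^{k-1})-A\bigr)(x^{k}-x^{k-1})+R_{k-1}$, and using $\|F'(x^{k-1})-F'(x^{0})\|\le C\|x^{k-1}-x^{0}\|\le C\varepsilon$ together with $\|A^{-1}\|=\delta$ gives
$$\|x^{k+1}-x^{k}\|\le\delta\|F(x^{k})\|\le\delta C\varepsilon\,\|x^{k}-x^{k-1}\|+\tfrac{\delta C\varepsilon}{2}\,\|x^{k}-x^{k-1}\|.$$
Here conditions (2) and (3) enter: the bound $\delta C\varepsilon\le 1/4$ controls both coefficients and yields the contraction factor $q=\tfrac32\delta C\varepsilon\le\tfrac38<1$, while $C\varepsilon\le\tfrac12$ guarantees in addition that $\|F'(x)-A\|\le\tfrac12$ uniformly on the ball (so that $F'(x)$ itself stays invertible by a Neumann-series argument, should one prefer the genuine Newton step).

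Geometric decay then gives $\|x^{k+1}-x^{k}\|\le q^{k}\|x^{1}-x^{0}\|\le q^{k}\varepsilon/2$, whence $\|x^{k}-x^{0}\|\le\varepsilon/(2(1-q))=4\varepsilon/5<\varepsilon$, which closes the induction by keeping every iterate strictly inside $B_{\varepsilon}(x^{0})$. As the increments are dominated by a convergent geometric series, $\{x^{k}\}$ is Cauchy and converges to some $\bar{x}\in B_{\varepsilon}(x^{0})$. Finally, from $F(x^{k})=-A(x^{k+1}-x^{k})$ and the boundedness of $A$ one gets $\|F(x^{k})\|\le\|A\|\,\|x^{k+1}-x^{k}\|\to0$, while $F(x^{k})\to F(\bar{x})$ by continuity of $F$; hence $F(\bar{x})=0$. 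I expect the only genuinely delicate point to be the simultaneous induction of the middle step --- arranging the in-ball hypothesis and the contraction estimate so that they reinforce rather than chase one another --- whereas the passage to the limit and the verification $F(\bar{x})=0$ are routine.
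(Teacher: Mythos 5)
Your proposal is correct and follows essentially the same route as the paper: the paper does not spell out the argument (it attributes the result to a modification of one in its reference [DeMa73]) but explicitly describes the proof as obtaining the solution as the limit of recursively defined Newton-type approximations built from the inverse of the derivative at the chosen point $x^{0}$, which is exactly your modified Newton scheme $x^{k+1}=x^{k}-[F'(x^{0})]^{-1}F(x^{k})$ with the simultaneous in-ball/contraction induction. Your estimates (contraction factor $\tfrac32\delta C\varepsilon\le\tfrac38$ from conditions (1)--(2), iterates confined to $B_{4\varepsilon/5}(x^{0})$, and passage to the limit) are sound, so the proposal is a valid filling-in of the proof the paper only sketches.
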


If  the first derivative of $F$ at $x^{0}$ is not surjective, then Theorem \ref{3th5} cannot be applied. Consider, as an example, a mapping $F:{\mathbb R}\rightarrow {\mathbb R}$ defined by
$$F(x)=\dfrac{1}{7!}x^{7}+x^{5}+\dfrac{1}{10^{3}}.$$ 
Note that if $x^{0}=0$, the assumptions of  \ref{3th5} are not satisfied but the equation $F(x)=0$ has a solution $\bar{x}\approx -0,251188$.

\subsubsection{Existence of solutions to nonlinear equations in singular case}
\label{sub5.9}
In this section, we continue considering the problem stated in Section \ref{sub3.8}. Namely, let $X$ and $Y$ be Banach spaces and $F:X\rightarrow Y$.
Assume that $F(x^{0}) \neq 0$ for some $x^{0}$. We are interested in existence of a solution $\bar{x}$ of the equation $F(x)=0$ in some small neighborhood $B_{\varepsilon}(x^{0})$ of the point $x^{0}$, so that
$F(\bar{x}) =0$. Most of work on solving this problem focuses on Newton's method or some modifications of Newton's method under an assumption that $F'(x^{0})$ is regular (see e.g. \cite{Moore1977}). 

Consider a degenerate case when $F'(x^{0})$ is not regular. The focus will be on finding a small constant $\varepsilon >0$ such that the neighborhood $B_{\varepsilon}(x^{0})$ has a solution $\bar{x}$ of $F(x) =0$.
Introduce the following notations and assumptions for some $p \geq 2$:

\begin{equation} \label{5eq16}
  \delta  = \left\|F(x^{0})\right\|,
\end{equation}
\begin{equation}  \label{5eq18}
  \eta = \left\|\{\Psi_p(h)\}^{-1}\right\|<\infty,\quad   h \in \bigcap_{k=1}^{p}\Ker^{k}f_{k}^{(k)}(x^{0}) , \quad \|h\|=1,
\end{equation}
$$
    c=\max_{k=1,\ldots,p} \sup_{x\in B_{\varepsilon}(x^{0})}\left\|f^{(k+1)}_{k}(x)\right\|,
\quad     d= 4\max_{k=1,\ldots,p}\dfrac{1}{(k-1)!}\left\|f^{(k)}_{k}(x^{0})\right\|,
$$
\begin{equation}  \label{5eq21}
    \alpha= \min\left\{\dfrac{3}{4^{p+2}\eta},\min_{k=1,\ldots,p}\dfrac{\left\|f^{(k)}_{k}(x^{0})\right\|}{(k-1)!}\right\}.
\end{equation}

The following theorem was proved in \cite{PrTr11}.
\begin{theorem}
\label{5th18}
Let $X$ and $Y$ be  Banach spaces and $F: X\rightarrow Y$, $F\in
\mathcal{C}^{p+1}(X).$ Assume that there exists $h\in \mathop{\bigcap}\limits_{k=1}^{p}\Ker^{k}f_{k}^{(k)}(x^{0}),$  $\|h\|=1$, such
that $F$ is a $p$-regular mapping at $\;x^{0} \in X \;$ along
$h.$

Assume also that there exists $\omega,$ $0<\omega< \dfrac{1}{2}\nu,$ $\nu\in (0,1),$ such that the following inequalities hold:
\begin{enumerate}
        \item $ \eta\delta\leq\alpha\dfrac{\omega^{p}}{2pd},$\\
        \item $\dfrac{4^{p+2}}{3}c\omega\eta\leq \dfrac{1}{2}.$\\
\end{enumerate}
Then the equation $F(x)=0$ has a solution $\bar{x} =x^{0}+\omega h+\bar{x}(\omega)\in
B_{\nu}(x^{0})$, where $\bar{x}(\omega)$ is a fixed point such that $\|\bar{x}(\omega)\|\leq\dfrac{1}{2}\omega.$
\end{theorem}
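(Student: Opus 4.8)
The plan is to realize $\bar x$ as a fixed point of a contraction built from the right inverse $\{\Psi_p(h)\}^{-1}$, exactly as the quantity $\eta$ and the two hypotheses suggest. First I would use the decomposition $Y = Y_1 \oplus \cdots \oplus Y_p$ to replace $F(x)=0$ by the equivalent system $f_i(x)=0$, $i=1,\dots,p$, with $f_i = P_{Y_i}F$ as in \eqref{4eq2}, and then look for a solution of the prescribed form $x = x^{0} + \omega h + z$, treating $z$ as the unknown and aiming for $\|z\| \le \tfrac12\omega$.

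The key device is a componentwise rescaling. I define the block isomorphism $S_\omega : Y \to Y$ acting on $Y_i$ as multiplication by $(i-1)!\,\omega^{-(i-1)}$ and set $\Theta(z) = S_\omega F(x^{0} + \omega h + z)$, so that $\Theta(z)=0$ iff $F(x^{0}+\omega h+z)=0$. Expanding each $f_i$ about $x^{0}$ and using \eqref{image_projection}, i.e.\ $f_i^{(k)}(x^{0})=0$ for $k=1,\dots,i-1$, the lowest-order contribution to the $Y_i$-component starts at order $i$; inside $\tfrac{1}{i!}f_i^{(i)}(x^{0})[\omega h+z]^i$ the pure term $\tfrac{\omega^i}{i!}f_i^{(i)}(x^{0})[h]^i$ vanishes because $h\in\Ker^i f_i^{(i)}(x^{0})$, while the term linear in $z$ equals $\tfrac{\omega^{i-1}}{(i-1)!}f_i^{(i)}(x^{0})[h]^{i-1}[z]$. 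Applying $S_\omega$ turns this into $f_i^{(i)}(x^{0})[h]^{i-1}[z]$, and summing over $i$ reproduces precisely the $p$-factor operator $\Psi_p(h)$ of \eqref{4eq3}. Hence I can write $\Theta(z) = \Psi_p(h)[z] + R(z)$, where $R$ collects the constant term $\Theta(0)=S_\omega F(x^{0})$, all terms of order $\ge 2$ in $z$, and the genuinely higher-order-in-$\omega$ remainders coming from the derivatives $f_i^{(k)}(x^{0})$ with $k>i$.

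Since $F$ is $p$-regular at $x^{0}$ along $h$, $\Psi_p(h)$ is onto with right inverse of norm $\eta<\infty$ by \eqref{4eq6}, so $\Theta(z)=0$ is equivalent to the inclusion $z\in\{\Psi_p(h)\}^{-1}(-R(z))$. I would then define the iteration $G(z)$ by selecting, for each $z$ in the ball $\overline{B}(0,\tfrac12\omega)$, an element of $\{\Psi_p(h)\}^{-1}(-R(z))$ of norm at most $\eta\|R(z)\|$. The self-map property rests on hypothesis $(1)$: the constant part obeys $\|S_\omega F(x^{0})\|\lesssim (p-1)!\,\omega^{-(p-1)}\delta$, and the normalization through $\alpha$, $d$ and $p$ is arranged so that $\eta\|\Theta(0)\|$ stays well below $\tfrac12\omega$. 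The contraction property rests on hypothesis $(2)$: after the $S_\omega$ rescaling each term in $R(z)-R(z')$ carries a net factor $\omega$ together with a derivative bounded by $c$ on $B_\varepsilon(x^{0})$, yielding a Lipschitz constant of order $\eta c\omega$, which $\tfrac{4^{p+2}}{3}c\omega\eta\le\tfrac12$ forces below $1$. Invoking the multivalued contraction mapping principle (Lemma~1 of \cite{theoryofetremal}, used above in the proof of Theorem~\ref{5th1}) produces a fixed point $\bar x(\omega)$ with $\|\bar x(\omega)\|\le\tfrac12\omega$; then $\bar x = x^{0}+\omega h+\bar x(\omega)$ solves $F(\bar x)=0$ and lies in $B_\nu(x^{0})$ since $\|\bar x - x^{0}\|\le\tfrac32\omega<\nu$ whenever $\omega<\tfrac12\nu$.

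The main obstacle is the bookkeeping concealed in $R$: one must check that, after rescaling by $S_\omega$, every contribution other than $\Psi_p(h)[z]$ is either $O(\delta/\omega^{p-1})$ (the source term) or carries a strictly positive power of $\omega$ multiplying a $\mathcal{C}^{p+1}$-bounded derivative (the contraction term), with no residual $O(1)$ linear-in-$z$ piece left over; this is exactly where $F\in\mathcal C^{p+1}$ and the uniform bound $c$ are indispensable, and where the explicit constants $\alpha$, $d$ and the powers of $4$ appearing in \eqref{5eq21} and in hypothesis $(2)$ must be calibrated. A secondary technical point is choosing the selection from the set-valued right inverse $\{\Psi_p(h)\}^{-1}$ with the norm control \eqref{4eq6}, which the cited multivalued fixed-point lemma is designed to accommodate.
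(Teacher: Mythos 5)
The paper itself contains no proof of Theorem \ref{5th18}: the statement is given with a pointer to \cite{PrTr11}, so the only comparison available is with the method that the paper and that reference are built on. Your proposal follows precisely that method, and its skeleton is sound: seek the solution as $x^{0}+\omega h+z$, use the splitting $Y=Y_1\oplus\cdots\oplus Y_p$, the degeneracy relations \eqref{image_projection}, and $h\in\bigcap_{k=1}^{p}\Ker^{k}f_{k}^{(k)}(x^{0})$ so that, after the $\omega$-weighted componentwise rescaling, the order-one linear-in-$z$ part of the rescaled $F(x^{0}+\omega h+z)$ is exactly the $p$-factor operator $\Psi_p(h)z$ of \eqref{4eq3}; then solve $z\in\{\Psi_p(h)\}^{-1}\bigl(-R(z)\bigr)$ by the multivalued contraction mapping principle (Lemma 1 of \cite{theoryofetremal}), which is the same lemma the paper invokes for the inverse inclusion in Theorem \ref{5th1}, with $p$-regularity along $h$ supplying the norm bound $\eta$ of \eqref{4eq6}, hypothesis (1) the self-map property on the ball $\|z\|\le\omega/2$, and hypothesis (2) the contraction constant; the final inclusion $\bar{x}\in B_{\nu}(x^{0})$ follows from $\|\bar{x}-x^{0}\|\le\tfrac32\omega<\nu$.

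Two corrections to your bookkeeping. First, a notational slip: $\Theta(0)=S_\omega F(x^{0}+\omega h)$, not $S_\omega F(x^{0})$; the latter is only the source piece of $R(0)$, while the pure powers of $\omega h$ coming from derivatives of order above $i$ are a separate piece (your text does account for them, so this is harmless). Second, and more substantively: you expand each $f_i$ at $x^{0}$ up to order $p$ and treat the terms $f_i^{(k)}(x^{0})$ with $i<k\le p$ individually, but the constants of the theorem are not designed for that accounting --- $c$ bounds only the derivatives $f_k^{(k+1)}$ on $B_{\varepsilon}(x^{0})$, and $d$ and $\alpha$ in \eqref{5eq21} involve only the values $\|f_k^{(k)}(x^{0})\|$, so the quantities $\|f_i^{(k)}(x^{0})\|$, $k>i$, that your remainder estimates require are controlled by none of them. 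To land exactly on hypotheses (1)--(2) you should instead Taylor-expand each $f_i$ only to order $i$,
\begin{equation*}
f_i(x^{0}+u)=f_i(x^{0})+\frac{1}{i!}\,f_i^{(i)}(x^{0})[u]^{i}+r_i(u),\qquad \|r_i(u)\|\le\frac{c}{(i+1)!}\,\|u\|^{i+1},
\end{equation*}
which is what the definition of $c$ is calibrated for; then every contribution to $R$ other than the source term is bounded by $c$ and carries at least one net factor of $\omega$ after rescaling, and both the self-map and the Lipschitz estimates come out in the form the two hypotheses prescribe. With that adjustment your argument goes through as sketched.
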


Recall that if our focus is on finding radius $\varepsilon>0$ such that the neighborhood $B_{\varepsilon}(x^{0})$ has a solution $\bar{x}$ of $F(x) =0$,
then Theorem \ref{5th18} implies that $\varepsilon= \omega +\bar{x}(\omega)$. For example, we can take $\varepsilon= \dfrac32 \omega.$

As an example of singular nonlinear equation we consider the problem of 
existence of local nontrivial solutions of the boundary value problem (BVP) for an 
ordinary differential equation 
\begin{equation}
\label{3eq13}
    y''(t) + y(t) + g(y(t))=x(t)
\end{equation}
with the boundary conditions
\begin{equation}
\label{3eq14}
y(0)=y(\pi)=0
\end{equation}
which is degenerate at $y^*(t)=0$.
Here, $y(\cdot)\in \mathcal{C}^{2}([0,\pi])$ and $g, x$ are
given functions such that $$x\in \mathcal{C}[0,\pi], \quad
g\in\mathcal{C}^{p+1}([0,\pi]), \quad
g(\mathbf{0})=g'(\mathbf{0})=\mathbf{0}.$$

The application of Theorem \ref{5th18} is described in more details in Section~\ref{sub3.9}.

\begin{remark}
\label{5rem19}
Recall that the operator $\Psi_p$ is defined in \eqref{4eq3}. 
The surjectivity of operator
 $\Psi_p(\omega h)$ for any $\omega\neq 0$ implies the condition of $p$-regularity of the mapping $F$ at the point $x^{0}$ (by the definition). It is also equivalent to the following inequality with a vector $h$ such that $\|h\|=1$:
 $$\|\{\Psi_p(\omega h)\}^{-1}y\|\leq \left(1+\dfrac{1}{\omega}+\dfrac{1}{w^{2}}+\ldots+\dfrac{1}{\omega^{p-1}} \right).$$
\end{remark}
%%%%%
%%%%

\subsection{Differential equations}
\label{sub3.9}

\subsubsection{Nonlinear differential equations}

Consider the second-order bo\-un\-da\-ry-value problem with a nonlinear ordinary
differential equation 
\begin{equation}
\label{3eq13s}
    y''(t) + y(t) + g(y(t))=x(t),
\end{equation}
and the boundary conditions
\begin{equation}
\label{3eq14s}
y(0)=y(\pi)=0.
\end{equation}
Here, $y(\cdot)\in \mathcal{C}^{2}([0,\pi])$ and $g, x$ are
given functions such that $$x\in \mathcal{C}[0,\pi], \quad
g\in\mathcal{C}^{p+1}([0,\pi]), \quad
g(\mathbf{0})=g'(\mathbf{0})=\mathbf{0}.$$

We investigate local
existence of nontrivial solutions of equations
\eqref{3eq13s}--\eqref{3eq14s}. Assume that $t\in
[0,\pi].$ Moreover, let $X=\left\{y(\cdot)\in
\mathcal{C}([0,\pi]):y(0)=y(\pi)=0\right\}$, $Y=\left\{y(\cdot)\in
\mathcal{C}^{2}([0,\pi]):y(0)=y(\pi)=0\right\}$ and
$Z=\mathcal{C}([0,\pi])$ be spaces with standard norms. Introduce
the mapping $F:X \times Y {\rightarrow} Z$,
$$
F(x,y)=y''+y+g(y)-x.
$$
Then we can rewrite equation \eqref{3eq13s} as
\begin{equation}
\label{3eq15}
F(x,y)=0.
\end{equation}
Observe that $F(0,0)=0$, i.e. $(0,0)$ is a solution of \eqref{3eq15}. Without loss of generality, we may restrict our attention to some neighborhood $U\times V \subset X\times Y$ of the point $(0,0)$. 
Then the problem of existence  of a solution of the nonhomogeneous BVP \eqref{3eq13s}--\eqref{3eq14s} for $x(t)\in U$ is equivalent to the problem  existence of an implicit function $\phi: U \rightarrow Y$ such that $y=\phi(x)$ and 
\begin{equation}
    \label{7-brez}
    F(x,\phi(x)) =0,\ \ \phi(0)=0.
\end{equation}

In the case when $F(0,0)=0$ and the mapping $F$ is regular at $(0,0)$; i.e., when its derivative with respect to $y$, denoted by $F_y'(0,0)$, is invertible, the classical Implicit Function Theorem guarantees the existence of a smooth mapping $\phi:U\rightarrow Y$ defined in a neighborhood of $\bar{x}=0$ such that $F(x,\phi(x))=0$ and $\phi(0)=0.$

Now, taking $y_{0}(t)\equiv \mathbf{0},$ under above assumptions,
we get that $F(x,y_{0})= - x(t)$ and
$$F'_{y}(0,y_{0})(\cdot)={(\cdot)}''+(\cdot)+g'(y_{0}),$$ so
    $F'_{y}(0,0)\xi={\xi}''+\xi.$ Note, that $F$ is singular at $y_{0}$, {\em i.e.} $F'_{y}(0,y_{0})$ is not onto because  for
$y(t)=\sin t$ the following boundary-value problem
$$
{y}''(t)+y(t)=\sin t, \;\; y(0)=y(\pi)=0
$$
does not have a solution. Hence, the operator $F'_y(0,0)$ is
not surjective (see \cite{BTM08}) and the classical Implicit Function
Theorem cannot be applied to guarantee existence of a solution
of the equation \eqref{3eq15}.

\subsubsection{Nonlinear differential equations in singular case}
\label{sub5.10}

We consider the boundary-value problem \ref{3eq13s}--\ref{3eq14s}
and the mapping
\begin{equation}
\label{5eq22}
   F(x(t),y(t))=y''(t)+y(t)+g(y(t))-x(t), \;\;\;t\in [0,\pi],
\end{equation}
 where $F:X\times Y \rightarrow Z,$ $X=\left\{x(\cdot)\in
\mathcal{C}([0,\pi]):x(0)=x(\pi)=0\right\},$  $Z={C}([0,\pi])$, 
$Y=\left\{y(\cdot)\in
\mathcal{C}^2([0,\pi]):y(0)=y(\pi)=0\right\}$,  and as above, 
\begin{equation} \label{gg0}
g(0)=g'(0)=0.
\end{equation}

%({\color{red} czy w tym miejscu dla $p>2$ ma sens?})

We restrict our attention to some neighborhood of the point $(x_0(t), y_0(t))=(0,0)$, $ t\in [0,\pi].$
 Observe that $F'_{y}(0, 0)$ is not surjective.  % singular at $y_{0}$ as it has the form
$F_y'(0,0)(\cdot)={(\cdot)}''+(\cdot).$ In order to construct the $p$-factor
operator, we first define the image of the operator $F'_{y}(0,0)$. Namely,
$$
\operatorname{Im}  F_y'(0,0)=\{z(\cdot)\in Z:\  {\xi}''+\xi=z(t),\; \xi(0)=\xi(\pi)=0, \ \xi\in X\}.
$$
The general solution of the equation $\xi''(t)+\xi(t)=z(t)$ has the
form
\begin{equation}
\label{rozw-og}
\begin{array} {rcl}
\xi(t) &=&C_{1}\cos t +C_{2}\sin t -\sin t \displaystyle\int_{0}^{t}\cos \tau
z(\tau) d\tau \\
&& + \cos t \displaystyle\int_{0}^{t}\sin \tau z(\tau) d\tau,\quad
 C_{1},C_{2}\in \mathbb{R}.
\end{array}
\end{equation}
By using the boundary conditions, we obtain $C_{1}=0$ and
$\displaystyle\int_{0}^{\pi}\sin \tau z(\tau)d\tau =0.$ Finally, we get
$$Z_{1}=\hbox{Im} F_y'(0,0)=\{z(\cdot)\in Z:\int_{0}^{\pi}\sin \tau
z(\tau)d\tau =0\}\neq Z.$$ 

The kernel of the operator $F_y'(0,0)$ is
defined by solving the following boundary-value
 problem
$$\xi''+\xi=0,\; \xi(0)=\xi(\pi)=0 .$$ By \eqref{rozw-og}, $\xi(t)=C\sin t$ is a solution, so that
$W_{2}=\Ker   F'_{y}(0,0)=\hbox{span}\{\sin t\}$ and
$$
P_{W_{2}}z=\dfrac{2}{\pi}\sin t \int_{0}^{\pi}\sin \tau
z(\tau)d\tau, \;\;z\in Z.
$$
By an assumption \eqref{gg0}, we have $g'(0)=0$ and $$F''(0, 0)=g''(0),
\ldots, F^{(p)}(0, 0)=g^{(p)}(0).$$ Finally we get
\begin{equation}
\begin{split}
&Y_{2}=\hbox{span}(\hbox{Im} P_{Z_{2}}F''(0, 0)[\cdot])\\
&=\hbox{span}\left\{y(t): \exists h\in X:y(t)=\dfrac{2}{\pi}\sin t
\int_{0}^{\pi}\sin \tau g''(0)[h(\tau)]^{2}d\tau\right\}.
\end{split}
\end{equation}
With additional assumptions, 
$$F_{yy}''(0,0)=g''(0),\ldots, F^{(p)}_{y\ldots y} (0)
\ldots, F^{(p)}(0,0)=g^{(p)}(0)=0,$$ we get that other spaces $Y_{3},\ldots,Y_{p}$ can be defined in a similar way
and depend only on the mapping $g(x).$ Next we define mappings:
$$f_{1}(x)=F(x), \;f_{i}(x)=P_{Y_{i}}F(x), \quad i=2,\ldots , p.$$
Moreover, note that
$f_{i}(0)[\cdot]^{i-1}=P_{Y_{i}}g^{(i)}(0)[\cdot]^{i-1}.$ Then,
the $p$-factor operator has the following form
$$
\Psi_{p}(h)(\cdot)=(\cdot)''+(\cdot)+\dfrac{1}{2}
P_{Y_{2}}g''(0)[h]+\ldots +
\dfrac{1}{p!}P_{Y_{p}}g^{(p)}(0)[h]^{p-1}.
$$
\begin{example}
Consider the second-order nonlinear ordinary differential equation
\begin{equation}
\label{5eq23}
    x''(t) + x(t) + x^{2}(t)=a \sin t, \;\; x(0)=x(\pi)=0,
\end{equation}
 where $a\in \mathbb{R}.$ We have $g(x)=x^{2},$ $v(t)=a \sin t.$ Let
us verify that for $x^{0}=\mathbf{0}$ all assumptions of
\ref{5th18} are satisfied for the
mapping $F(x)=x''+x+x^{2}-a \sin t$ with a sufficiently small $a\geq 0$ and $p=2.$\\
It is easy to see that $F(x^{0})=-a \sin t$ and hence from
\ref{5eq16} $\delta=a.$ Moreover, $F$ is singular at $x^{0}.$ In
this case, $2$-factor operator has the following form
$$\Psi_{2}(h(t))(\cdot)=(\cdot)''+ (\cdot)+\dfrac{2}{\pi} \sin t
\int_{0}^{\pi}h(\tau)(\cdot)\sin \tau d\tau$$ and the
corresponding nonlinear operator is
$$\Psi_{2}[h(t)]^{2}=h''(t)+h(t)+\dfrac{2}{\pi}\sin t \int_{0}^{\pi}[h]^{2}(\tau)\sin
\tau d\tau = \dfrac{2}{\pi}\sin t \int_{0}^{\pi}[h]^{2}(\tau)\sin
\tau d\tau,$$ since $h(t)\in \Ker   F'(0)=\hbox{span}\{\sin t\}.$ From
the Banach condition in \ref{GIFT}, we obtain
$$
h\in \left\{x\in X: \Psi_{2}[x]^{2}=-F(x^{0})\right\}
$$
and then
$$h(t)=\pm\dfrac{\sqrt{3\pi a}}{2\sqrt{2}} \sin t, \hbox{ for } a>0, \;\; \hat{h}(t)=\pm\sin t.
$$
The $p$-factor operator for $\hat{h}(t)=\sin t$ has the form\\
$$\Psi_{2}(\hat{h}(t))(\cdot)=(\cdot)''+(\cdot)+\dfrac{2}{\pi}\sin
t \int_{0}^{\pi}(\cdot)\sin^{2} \tau d\tau .$$

Then  mapping  $F$ is $2$-regular at the point $x^{0}$ along $\hat{h}$
because $\;\Psi_{2}(\hat{h}(t))$ is a surjection.

Now, taking into account \ref{5eq16}--\ref{5eq21} for the considered
boundary-value problem, we get $c_{1}=\dfrac{1}{\pi},$ $c_{2}=2$
and $c_{3}=\sqrt{\dfrac{\pi}{96}}.$ Finally, applying 
\ref{5th18}, we deduce that for $a\leq \dfrac{\pi}{9600}$ there is an 
$\varepsilon$ (for instance, $\varepsilon=\dfrac{\pi}{80}$), such
that there exists a solution $\bar{x}(t)\in
B_{\varepsilon}(x^{0}),$ i.e.
$\bar{x}''(t)+\bar{x}(t)+\bar{x}^{2}(t)=a \sin t.$
\end{example}

%% Edit from here
\subsection{Interpolation by polynomials}
\label{sub4.11.1}

In this section, we consider one of the newest applications of the $p$-regularity theory. There are many books on numerical analysis and numerical methods where the topic of interpolation and polynomial approximation is described in detail (see, for example,  \cite{BB} and \cite{BF}).

\subsubsection{Newton interpolation polynomial}

Let $f$ be $\mathcal{C}^{p+1}([a,b])$ and consider the equation $$f(x)=0,$$ where $x\in [a,b]$. For some $\Delta x>0$, define points $x_i$, $i=0, \ldots, n$, as follows:
$$
x_0=a, \; x_1=x_0 + \Delta x, \; x_2 = x_1 + \Delta x, \ldots,  x_n=b.$$ Let $$y_i=f(x_i), \quad i=0,1,\ldots, n.$$

The problem of interpolation is to find a polynomial $W_n(x)$ of degree at most $n$, such that $W_n(x_i)=y_i$, $i=0,\ldots,n$, and that gives a good approximation of the function $f(x)$.

Let $\varepsilon = \Delta x$ be sufficiently small and assume  that $|f(x)-W_n(x)|\leq C_1 \varepsilon$, where $C_1 \geq 0$ is a constant.
Assume that the equation $f(x)=0$ has a solution $\bar{x} \in (a, b)$ and the equation $W_n(x)=0$ has a solution  $\tilde{x} \in (a, b)$.
Our goal is to use the interpolation polynomial $W_n(x)$ and its solution $\tilde{x}$ to obtain the $\varepsilon^2$-accuracy of the solution $\bar{x}$  in the sense that
\begin{equation} \label{aq}
| \bar{x} - \tilde{x}| \leq C \varepsilon^2,
\end{equation}
where $C \geq 0$ is a constant. In the regular case, this can be obtained by using, for example, the Newton interpolation polynomial $W_n(x)$ with $ \Delta x= \varepsilon.$

Recall  that the Newton interpolation polynomial of degree $n$, related to the data points 
$$(x_0,y_0),(x_1,y_1),\ldots,(x_n,y_n),$$ is defined by
$$
  W_n(x) 
   = \begin{array}[t]{l}\alpha_0+\alpha_1(x-x_0)+\alpha_2(x-x_0)(x-x_1)+\\
   \ldots +\alpha_n(x-x_0)(x-x_1)\ldots (x-x_{n-1})\\
   = \mathop{\sum}\limits_{k=0}^n \alpha_k \omega_k(x),
   \end{array}
$$
where
\begin{equation}  \label{om}
\begin{array} {l}
\omega_0(x)=1,\\
\omega_i(x)=(x-x_0)(x-x_1)\ldots (x-x_{i-1})=\prod_{j=0}^{i-1}(x-x_j), \quad i=1,\ldots,n.
\end{array}
\end{equation}

The coefficients $\alpha_k$ are called {\em divided differences} and defined using the following relations:
\begin{equation} \label{alp}
\alpha_k=  [y_0, \ldots, y_k], \quad  k=0,1,\ldots,n,
\end{equation}
 where
\begin{eqnarray*}
[y_k] &&= y_k,\; k=0, \ldots, n,  \\[2mm]
 [y_k, \ldots, y_{k+j}]
& &= \frac{[y_{k+1},\ldots, y_{k+j}]- [y_k, \ldots , y_{k+j-1}]}{x_{k+j}-x_k}],\\
&& \qquad k=0, \ldots,n-j, \quad j = 1, \ldots, k. 
\end{eqnarray*}

In the following example, we consider a nonlinear function $f(x)$, which is not regular  at a solution of the equation $f(x) = 0$, and investigate
if a solution of the equation $W_n(x) = 0$ provides desired accuracy \eqref{aq} for the solution $\bar{x}$ of $f(x) =0$, assuming that $|f(x)-W_n(x)|\leq C_1 \varepsilon$ holds for a sufficiently small $\varepsilon$.

\begin{example}
 \label{ex:interp}
Consider function $f(x)=x^3$. The solution of the equation $f(x)=0$ is $\bar{x}=0$. Function $f(x)$ is  singular at $\bar{x}=0$ up to the second order because $f^{(i)}(0)=0$, $i=1,2.$ The goal in this example is to investigate whether estimate \eqref{aq} is satisfied when we are using the interpolation polynomial $W_1(x)$ and a solution of $W_1(x)=0$  to approximate the solution of $f(x)=0$. Using the equations given above with $n=1$, we get
$$W_1(x)=\alpha_0+\alpha_1(x-x_0),$$
where the coefficients $\alpha_0$ and $\alpha_1$ are determined by using Equation \eqref{alp}.

 Let $\varepsilon = \Delta x$ be sufficiently small and  consider the segment $[a, b] = [-\dfrac{1}{3} \varepsilon, \dfrac{2}{3} \varepsilon]$.
The interpolation points are $x_0=a=-\dfrac{1}{3} \varepsilon$ and $x_1=b=\dfrac{2}{3} \varepsilon$. Calculating the coefficients, we get $$\alpha_0=f(x_0)=-\dfrac{\varepsilon^3}{27} \quad \mbox{and} \quad \alpha_1=\frac{f(x_1)-f(x_0)}{x_1-x_0}=\dfrac{\varepsilon^2}{3}.$$ 
 Hence, the interpolation polynomial has the form
$$W_1(x)=-\dfrac{\varepsilon^3}{27}+\dfrac{\varepsilon^2}{3} (x + \dfrac13 \varepsilon)
= -\dfrac{\varepsilon^3}{27} + \dfrac{\epsilon^2}{3}x + \dfrac{\varepsilon^3}{9}=\dfrac{2 \varepsilon^3}{27}+ \dfrac{\epsilon^2}{3}x ,$$
and, moreover, $$|W_1(x) - f(x)| \approx \varepsilon^3\leq C_2\varepsilon, \quad C_2 \geq 0,$$
for a sufficiently small $\varepsilon$.

The solution of the equation $W_1(x)=0$ is $$\tilde{x}=-\frac{2}{9}\varepsilon,$$
which is not satisfactory from the approximation accuracy point of view since $$|\tilde{x} - \bar{x}| = \left|-\frac{2}{9} \varepsilon - 0 \right|\approx \varepsilon>\varepsilon^2$$
and the desired accuracy \eqref{aq} is not obtained.

Thus, in the degenerate case, contrary to the regular case, while we have the required accuracy of the approximation for the function $f$, the accuracy of the solution is only of the order $\varepsilon$, and not  $\varepsilon^2$.
\end{example}

\subsubsection{The $p$-factor interpolation method}
\label{sub4.11.2}

In this section, we will demonstrate that the desired accuracy  \eqref{aq} of the solution of the equation $f(x)=0$ in the degenerate case can be obtained by using the $p$-factor interpolation polynomial instead of the classical Newton interpolation polynomial for the purpose of finding an approximate solution of $f(x) =0$.

Let $f:\mathbb{R}\rightarrow \mathbb{R}$ be a $\mathcal{C}^{p+1}$ function that is singular at a point $\bar{x}$.

For some $p>1$, we associate $f$ with the $p$-factor function $\bar{f}$ of the form
$$\bar{f}(x)=f(x)+f'(x)h+\cdots+f^{(p-1)}(x)[h]^{p-1},$$
where $h\in \mathbb{R}$, $h\neq 0$.
Similarly to the Newton interpolating method, we construct the $p$-factor interpolation polynomial $\bar{W}_n(x)$ using function $\bar{f}$ as follows:
$$
\bar{W}_n(x) = \mathop{\sum}\limits_{k=0}^n \bar{\alpha}_k \omega_k(x),
$$
where functions $\omega_k(x)$ are defined in the same way as in \eqref{om} and the coefficients $\bar{\alpha}_k$, $k=0,1,\ldots,n$, are determined by
$$
\begin{array} {l}
\bar{\alpha}_0=[\bar{y}_0]=\bar{f}(x_0), \quad 
[ {\bar{y}_1}] = \bar{f}(x_1), 
\\[2mm]
\bar{\alpha}_1=[\bar{y}_0, \bar{y}_1]=\dfrac{[\bar{y}_1]-[\bar{y}_0]}{x_1-x_0},\\
\vdots\\
\bar{\alpha}_n=[\bar{y}_0, \ldots, \bar{y}_n] =\dfrac{[\bar{y}_1, \ldots,\bar{y}_n] - [\bar{y}_0, \ldots,\bar{y}_{n-1}]}{x_n-x_0}.
\end{array}
$$

\begin{theorem}
  Let the equation $f(x)=0$ has a solution $\bar{x} \in (a, b)$. Assume that $f\in \mathcal{C}^p([a,b])$ be $p$-regular along $h\neq 0$ at the point $\bar{x}$. Assume that 
 $\bar{W}_n(x)$ is the Newton interpolation polynomial for the function  $\bar{f}$,
  such that  $\varepsilon= \Delta>0$ is
   a  sufficiently small interpolation step. 
 
 Then the equation
   $\bar{W}_n(x)=0$ has a solution $\hat{x}\in (a,b)$
  such that 
  $$|\hat{x}-\bar{x}|\leq c \varepsilon^2,$$
 where $c>0$ is independent constant.
\end{theorem}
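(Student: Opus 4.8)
The plan is to use the $p$-factor function $\bar f$ to convert the degenerate root $\bar x$ of $f$ into a \emph{regular} root of $\bar f$, after which estimate \eqref{aq} follows from the classical Newton interpolation error bound together with an elementary sign-change (intermediate value) argument.

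First I would make explicit what $p$-regularity means here. Since $X=Y=\mathbb R$, singularity of $f$ at $\bar x$ forces $f'(\bar x)=0$, and the $p$-factor construction of Section~\ref{sec4} shows that $p$-regularity of $f$ along $h\neq 0$ is exactly the completely degenerate situation
$$f'(\bar x)=\cdots=f^{(p-1)}(\bar x)=0, \qquad f^{(p)}(\bar x)\neq 0,$$
so that $\Psi_p(h)=f^{(p)}(\bar x)[h]^{p-1}\neq 0$; equivalently $\bar x$ is a zero of $f$ of exact order $p$. The decisive observation is then that $\bar x$ is a \emph{regular} zero of the $p$-factor function $\bar f(x)=f(x)+f'(x)h+\cdots+f^{(p-1)}(x)h^{p-1}$: evaluating at $\bar x$ gives $\bar f(\bar x)=f(\bar x)=0$, while
$$\bar f'(\bar x)=f'(\bar x)+f''(\bar x)h+\cdots+f^{(p)}(\bar x)h^{p-1}=f^{(p)}(\bar x)h^{p-1}=:\mu\neq 0.$$
By continuity of $\bar f'$ there is a neighborhood of $\bar x$ on which $|\bar f'|\geq |\mu|/2$, so $\bar f$ is strictly monotone there and, by the mean value theorem, $|\bar f(\bar x\pm t)|\geq \tfrac{|\mu|}{2}\,t$ for all sufficiently small $t>0$.

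To finish, I would control $\bar W_n$ by the standard Newton interpolation error. Since $\bar W_n$ is the Newton interpolant of $\bar f$ at the $\varepsilon$-spaced nodes $x_0,\ldots,x_n$ on $[a,b]$, for every $x\in[a,b]$ there is $\xi$ with $\bar f(x)-\bar W_n(x)=\tfrac{1}{(n+1)!}\bar f^{(n+1)}(\xi)\prod_{j=0}^n (x-x_j)$; as the interval has length $n\varepsilon$ and contains $\bar x$, each factor is $O(\varepsilon)$ and hence $\|\bar f-\bar W_n\|_{\infty,[a,b]}\leq C_1\varepsilon^{n+1}\leq C_1\varepsilon^2$ for $n\geq 1$ and $\varepsilon$ small. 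Now set $t=\tfrac{4C_1}{|\mu|}\varepsilon^2$ and assume $\bar f'>0$ near $\bar x$ (the opposite sign being handled symmetrically). Then $\bar f(\bar x+t)\geq \tfrac{|\mu|}{2}t=2C_1\varepsilon^2$, while the interpolation error at $\bar x+t$ is at most $C_1\varepsilon^2$, so $\bar W_n(\bar x+t)\geq C_1\varepsilon^2>0$; symmetrically $\bar W_n(\bar x-t)\leq -C_1\varepsilon^2<0$. Since for small $\varepsilon$ the points $\bar x\pm t$ lie in $(a,b)$, the intermediate value theorem produces a root $\hat x\in(\bar x-t,\bar x+t)$ of $\bar W_n$, whence
$$|\hat x-\bar x|<t=\frac{4C_1}{|\mu|}\,\varepsilon^2=:c\,\varepsilon^2,$$
which is \eqref{aq}.

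The main obstacle---indeed the only genuinely new ingredient---is the regularization step $\mu=f^{(p)}(\bar x)h^{p-1}\neq 0$. For the naive Newton interpolant of $f$ itself the relevant slope near $\bar x$ degenerates to order $\varepsilon^{p-1}$ (the divided difference is $f'$ evaluated near the $p$-fold zero), so the interpolation error is divided by a vanishing slope and accuracy is lost, exactly as quantified in Example~\ref{ex:interp}; passing to $\bar f$ replaces this slope by $\mu$, which is bounded below independently of $\varepsilon$, so the $O(\varepsilon^2)$ function error transfers directly to the root. A secondary bookkeeping point is smoothness: since $\bar f$ involves $f^{(p-1)}$, the error formula for $n=1$ requires $f\in\mathcal C^{p+1}$ (as assumed in the surrounding text), and the interval $[a,b]$ must shrink with $\varepsilon$, as in Example~\ref{ex:interp}, so that the product of node distances is genuinely $O(\varepsilon^2)$.
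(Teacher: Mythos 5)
Your proof is correct, but there is nothing in the paper to compare it with line by line: the paper \emph{omits} the proof of this theorem, saying only that it ``is similar to the proof of convergence of the classical iterative Newton method,'' and then substantiates the statement with the worked example $f(x)=x^3$. Your argument therefore supplies what the paper leaves implicit, and it does so by a different and more elementary route than the one the paper gestures at. The paper's hint points toward a secant-type error recursion: for $n=1$ the root of $\bar{W}_1$ is exactly a secant step for $\bar{f}$ from two nodes at distance $O(\varepsilon)$ from $\bar{x}$, and the standard secant/Newton error analysis near a simple root yields the $O(\varepsilon^2)$ bound. You instead isolate the single regularization fact $\bar{f}(\bar{x})=0$, $\bar{f}'(\bar{x})=f^{(p)}(\bar{x})h^{p-1}=:\mu\neq 0$ (so the degenerate root of $f$ is a \emph{simple} root of $\bar{f}$), and then transfer the uniform $O(\varepsilon^2)$ interpolation error to the root via monotonicity of $\bar{f}$ and an intermediate-value bracket at $\bar{x}\pm\tfrac{4C_1}{|\mu|}\varepsilon^2$. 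Both arguments pivot on the same point --- the slope of $\bar{f}$ near $\bar{x}$ is bounded below independently of $\varepsilon$, whereas the slope of $f$ itself degenerates like $\varepsilon^{p-1}$, which is exactly why the classical interpolant only achieves $O(\varepsilon)$ accuracy in Example \ref{ex:interp} --- but your bracketing version is self-contained and, if anything, cleaner than the iterative one the paper alludes to.

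Two bookkeeping caveats, both of which you at least partially flag. First, the theorem's stated hypothesis $f\in\mathcal{C}^p([a,b])$ does not support the error formula you invoke: since $\bar{f}$ already contains $f^{(p-1)}$, the case $n=1$ requires $f\in\mathcal{C}^{p+1}$ (the smoothness assumed at the start of the section), and general fixed $n$ requires $f\in\mathcal{C}^{p+n}$; alternatively, with only $\bar{f}\in\mathcal{C}^2$ one can replace the error formula by a Lebesgue-constant comparison with the best linear approximation on the length-$n\varepsilon$ interval, which still gives $\|\bar{f}-\bar{W}_n\|_\infty\leq C_1\varepsilon^2$. Second, your assertion that $\bar{x}\pm t$ lie in $(a,b)$ for small $\varepsilon$ presumes that $\operatorname{dist}(\bar{x},\{a,b\})$ is of order $\varepsilon$, as in the paper's example where $\bar{x}$ sits at a fixed proportion inside the shrinking interval; if $\bar{x}$ happened to lie within $O(\varepsilon^2)$ of an endpoint, the bracket should be taken at that endpoint itself, where exactness of interpolation at the nodes gives $\bar{W}_n(a)=\bar{f}(a)<0$ (resp. $\bar{W}_n(b)=\bar{f}(b)>0$), so the sign change and the bound $|\hat{x}-\bar{x}|\leq c\varepsilon^2$ survive unchanged.
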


We omit the proof, as it is similar to the proof of convergence of the classical iterative Newton method.

Similarly to the definitions in the previous sections, $f\in \mathcal{C}^p([a,b])$ is $p$-regular along $h\neq 0$ at the point $\bar{x}\in (a,b)$ if there is a natural number $p\geq 2$ such that
$$
f^{(i)} (\bar{x}) =0, \quad i=1, \ldots, p-1, \quad \mbox{and} \quad f^{(p)} (\bar{x}) \neq0.
$$
Note that if $p=1$, the definition of $p$-regular function $f$ reduces to the definitioin of the regular function and the  $p$-factor interpolation polynomial $\bar{W}_n(x)$ becomes the Newton interpolation polynomial   ${W}_n(x)$.

\vspace{2mm}

{\it Example \ref{ex:interp} (continued.)} To use the $p$-factor interpolation method, define function $\bar{f}$ with $p=2$ and $h=1$ as
$$\bar{f}(x)=f(x)+f'(x)h+f^{\prime \prime}(x)[h]^{2}= x^3 + 3x^2 + 6x.$$
Now we can define  the $p$-factor interpolation polynomial $\bar{W}_1 (x)$. We use the same segment as above so that  the interpolation points are $x_0=a=-\dfrac{1}{3} \varepsilon$ and $x_1=b=\dfrac{2}{3} \varepsilon$. 
Define the coefficients: 
$$\bar{\alpha}_0= \bar{f}(x_0) = -\dfrac{1}{27} \varepsilon^3 + \dfrac{1}{3} \varepsilon^2 - 2 \varepsilon$$
and 
$$ 
\bar{\alpha}_1=\dfrac{ \bar{f}(x_1) - \bar{f}(x_0)}{x_1-x_0}= \dfrac{1}{3} \varepsilon^2 +  \varepsilon + 6
$$
The  $p$-factor interpolation polynomial is
\begin{eqnarray*}
\bar{W}_1 (x) &= & \bar{\alpha}_0  + \bar{\alpha}_1 (x - x_0) \\
&=& -\dfrac{1}{27} \varepsilon^3 + \dfrac{1}{3} \varepsilon^2 - 2 \varepsilon +  \left( \dfrac{1}{3} \varepsilon^2 +  \varepsilon + 6\right) \left(x+ \dfrac{1}{3} \varepsilon\right) \\
&=& \left(  \dfrac{1}{3} \varepsilon^2 +  \varepsilon + 6\right)x 
+\dfrac{2}{27} \varepsilon^3+ \dfrac{2}{3} \varepsilon^2 .
\end{eqnarray*}
Hence,
$$
\left| \bar{W}_1 (x) - f(x) \right| \leq C_3 \varepsilon, \quad C_3 \geq 0,
$$
for a sufficiently small value of $\varepsilon$.

%, which might not seem to be a good accuracy for an interpolation polynomial. 

Solving the equation $\bar{W}_1 (x) =0$,  we get $$\hat{x} = - \dfrac{\dfrac{2}{27} \varepsilon^3+ \dfrac{2}{3} \varepsilon^2}{\dfrac{1}{3} \varepsilon^2 +  \varepsilon + 6}
= - \dfrac{\dfrac{2}{9} \varepsilon^3+ 2 \varepsilon^2}{ \varepsilon^2 + 3 \varepsilon + 18} .
$$
Therefore,
 $$|\hat{x} - \bar{x}| = \left|-\dfrac{\dfrac{2}{9} \varepsilon^3+ 2 \varepsilon^2}{ \varepsilon^2 + 3 \varepsilon + 18} -0 \right|  < \dfrac{3\varepsilon^2}{18}=\dfrac{\varepsilon^2}{6},
$$
so we obtained the desired  $\varepsilon^2$-accuracy  \eqref{aq} of the solution of the equation $f(x)=0$. 

\vspace{2mm}

Now, let us take a look back and compare using the polynomials $W_1(x)$ and $ \bar{W}_1 (x)$ for obtaining the desired accuracy  \eqref{aq} of the solution $\bar{x}$ of $f(x)=0$ with the function $f$ from Example \ref{ex:interp}. As we mentioned above, the polynomial $W_1(x)$ is a good approximation for the function $f(x)$ because $$|W_1(x) - f(x)| \approx \varepsilon^3 \leq C_2\varepsilon, \quad C_2 \geq 0.$$ At the same time, the solution $\tilde{x}$ of  $W_1(x)=0$ does not give the desired accuracy of the solution because
$$
   \left|\tilde{x} -\bar{x} \right| \approx \varepsilon \geq C_4 \varepsilon, \quad C_4 \geq 0,
$$
and the desired accuracy of $\varepsilon^2$ is not reached.

Now, let us take a look at the $p$-factor interpolation polynomial $ \bar{W}_1 (x)$.  Using  the information from above, we get that $\bar{W}_1 $ approximates the function $f(x)$ with the order $\epsilon$:
$$
\left| \bar{W}_1 (x) - f(x) \right| \approx \varepsilon .
$$
Using the $p$-factor interpolation polynomial $ \bar{W}_1 (x)$, we get the desired accuracy for the solution $\bar{x}$ of the solutions of $f(x)=0$. As shown above, the solution $\hat{x}$ of $ \bar{W}_1 (x)=0$ satisfies 
estimate~\eqref{aq}:
 $$|\hat{x} - \bar{x} | \leq  \dfrac16 \varepsilon^2.
$$
Note this accuracy was not obtained using the classical interpolation polynomial $W_1(x)$.
%%% END of editing

\section{Conclusion}
\label{sec:C}

In this paper, we described various applications of the theory  of $p$-regularity. We should note that  we did not cover all areas where the results of the theory can be applied. In addition, there are other areas of mathematics, where the theory of $p$-regularity (or $p$-factor-analysis) has not been used yet. For example, we did not give examples of applying the theory to analysis of existence of solutions of singular nonlinear partial differential equations, such as Burger's nonlinear equation, Laplace nonlinear differential equation and others (see, for example, \cite{MedTr1}). We also have not covered results related to existence of solutions depending on a parameter for Van der Paul differential equation, Duffing equation and others (see, for example, \cite{MedTr2}). Other results not covered in this paper include examples of applying the theory of $p$-regularity for analysis of nonlinear dynamical systems (\cite{MedTr3}), optimality  conditions for optimal control problems in nonregular (degenerate) case (\cite{PrTr17}), and for inequality-constrained optimization problems (\cite{BrTr17}). Based on the theory of $p$-regularity, we can create the theory of so-called $p$-convexity, which can be effective for analysis of nonlinear problems.

The construction of $p$-factor-operator described in the paper presents a developed approach for analysis of degenerate situations of  various origin. The construction can be generalized using various vectors $h$.  For example,  in (\cite{BrTr03}) we gave the following generalization of the construction of $p$-regularity and of the $p$-factor-operator. It allowed us to derive the main result for description of nonlinear mappings, a generalization of Lyusternik Theorem.

Let for an element $h \in X$, $\|h\|=1$, the space $Y$  can be presented as
\begin{equation} \label{Y=sumY}
   Y = Y_1 (h) \oplus Y_2  (h),
\end{equation}
where $Y_1 (h) = {\rm Im} F^{(p)} (\bar{x}) [h]^{p-1}$ and $Y_2(h)$ is a closed complementary subspace of  $Y_1 (h) $ in $Y$. Let
$P_1$ and $P_2$ be operators of projection onto $Y_1 (h) $ and $Y_2 (h),$ respectively.

\begin{theorem}
\label{thL}
Let $X$ and $Y$ be Banach spaces,
$F\in \mathcal{C}^{p+1}(X),$ and
$$
   F^{(r)}(\bar{x}) = 0, \quad r=0,1,\dots,p-1 , \quad p \geq 2.
$$
Assume that condition \eqref{Y=sumY} is satisfied for $h \in   \Ker^{p} F\, ^{(p)} (\bar{x})$ and there exists an element $\tilde{h} \in $,
$\| \tilde{h} \| =1$, such that
$$
 F^{(p)}(\bar{x}) [h]^{p-1} [\tilde{h} ] =0,
$$
$$
  \| P_2 F (\bar{x} + th + t^*  \tilde{h} )\| \leq t^{(p-2) +2w+\epsilon}, \quad w \in (1, 3/2), \quad \epsilon \in (0, 1).
$$
\begin{equation} \label{Norm-Inv}
\left\| \left\{F^{(p)}(\bar{x}) [h]^{p-1} + P_2 F^{(p)}  [h]^{p-2}  [\tilde{h}] \right\}^{-1} \right\|  \leq \infty ,
\end{equation}
where $t \in (0, \delta)$ and $ \delta > 0$ is sufficiently small. Then $ h \in T_1 M(\bar{x})$.
\end{theorem}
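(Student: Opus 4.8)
The plan is to verify Definition \ref{Def:TC} directly: I must produce, for all sufficiently small $t>0$, a remainder $r(t)$ with $\|r(t)\|/t\to 0$ such that $F(\bar{x}+th+r(t))=F(\bar{x})=0$. Since $F^{(r)}(\bar{x})=0$ for $r=0,\dots,p-1$, the point $\bar{x}$ is a completely degenerate zero of $F$, and the ansatz suggested by the hypotheses is
$$r(t)=t^{w}\tilde{h}+z(t),$$
where $z(t)$ is a correction of strictly higher order to be found by a fixed-point argument (so that the base point is $\bar{x}+th+t^{*}\tilde{h}$ with $t^{*}=t^{w}$, matching the assumed estimate on $P_{2}F$). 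Because $w>1$, the term $t^{w}\tilde{h}$ is already $o(t)$, so it suffices to produce $z(t)$ with $\|z(t)\|=o(t)$ solving $F(\bar{x}+th+t^{w}\tilde{h}+z)=0$.

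First I would Taylor-expand $F$ about $\bar{x}$ to order $p$, using $F^{(r)}(\bar{x})=0$ for $r<p$, so that $F(\bar{x}+u)=\tfrac{1}{p!}F^{(p)}(\bar{x})[u]^{p}+O(\|u\|^{p+1})$. Inserting $u=th+t^{w}\tilde{h}+z$ and expanding $F^{(p)}(\bar{x})[\,th+t^{w}\tilde{h}\,]^{p}$ by multilinearity, the two lowest-order contributions vanish by hypothesis: the $t^{p}$-term because $h\in\Ker^{p}F^{(p)}(\bar{x})$, and the $t^{p-1+w}$-term because $F^{(p)}(\bar{x})[h]^{p-1}[\tilde{h}]=0$. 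The leading surviving term is of order $t^{p-2+2w}$. Likewise, the Fréchet derivative along the curve is $F'(\bar{x}+u)=\tfrac{1}{(p-1)!}F^{(p)}(\bar{x})[u]^{p-1}+o(t^{p-1})$, whose expansion produces $F^{(p)}(\bar{x})[h]^{p-1}$ at order $t^{p-1}$ (covering $Y_{1}(h)$) and $F^{(p)}(\bar{x})[h]^{p-2}[\tilde{h}]$ at order $t^{p-2+w}$ (whose $P_{2}$-part covers $Y_{2}(h)$). This identifies the operator in \eqref{Norm-Inv}, namely $\Psi:=F^{(p)}(\bar{x})[h]^{p-1}+P_{2}F^{(p)}(\bar{x})[h]^{p-2}[\tilde{h}]$, as the effective two-factor-type operator governing the correction; by \eqref{Norm-Inv} it is surjective with a bounded right inverse $\{\Psi\}^{-1}$.

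Next I would build a modified Newton iteration adapted to the decomposition \eqref{Y=sumY}. The crucial point is that $\Psi$ acts anisotropically in $t$: the component landing in $Y_{1}(h)$ carries the factor $t^{p-1}$, whereas the component landing in $Y_{2}(h)$ carries only $t^{p-2+w}$. Accordingly I would rescale by the operator equal to $t^{p-1}$ on $Y_{1}(h)$ and $t^{p-2+w}$ on $Y_{2}(h)$, and define the map
$$\mathcal{T}_{t}(z)=z-\{\Psi\}^{-1}\bigl[\,\text{(rescaled residual of }F(\bar{x}+th+t^{w}\tilde{h}+z))\,\bigr].$$
Dividing the two residual components by their respective gains is what makes the estimates close: the $P_{2}$-residual is bounded by $t^{(p-2)+2w+\epsilon}$ by hypothesis, so after division by $t^{p-2+w}$ it contributes a correction of size $t^{w+\epsilon}=o(t)$; the $P_{1}$-residual, coming from the order-$t^{p-2+2w}$ term and the $O(t^{p+1})$ remainder, contributes a correction of size $t^{2w-1}+t^{2}=o(t)$ since $w>1$. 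Using the multivalued contraction mapping principle (Lemma 1 of \cite{theoryofetremal}, exactly as in the converse inclusion of Theorem \ref{5th1}), I would show that $\mathcal{T}_{t}$ maps a ball of radius $\rho(t)=o(t)$ into itself and is a contraction there, yielding a fixed point $z(t)$ with $\|z(t)\|=o(t)$.

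The main obstacle is the simultaneous bookkeeping of these competing powers of $t$. One must verify that the scaled Newton map is a self-map of a suitable ball and is contractive there; the $Y_{2}$-direction forces the sharper constraint $\rho(t)=o(t^{w})$, which is compatible with the $Y_{1}$- and $Y_{2}$-residuals ($t^{2w-1}$ and $t^{w+\epsilon}$) precisely because $w>1$ and $\epsilon>0$. The remaining difficulty is to keep the nonlinear remainders generated along the iteration below the admissible residual level so that the contraction constant stays strictly below $1$; it is here that the upper bound $w<3/2$ (together with $\epsilon<1$) is consumed, and pinning down the exact admissible range $w\in(1,3/2)$ is the most delicate part of the argument. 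Once the fixed point $z(t)$ is secured, $r(t)=t^{w}\tilde{h}+z(t)$ satisfies $F(\bar{x}+th+r(t))=0$ and $\|r(t)\|/t\to0$, so $h\in T_{1}M(\bar{x})$ by Definition \ref{Def:TC}.
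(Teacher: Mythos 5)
The paper never proves Theorem \ref{thL}: it is quoted in the Conclusion as a result imported from \cite{BrTr03} (``In Theorem \ref{thL}, we have one of possible modifications of the $p$-factor-operator\dots''), so there is no in-paper argument to compare against. Judged on its own terms, your proposal follows exactly the methodology this paper points to for such statements --- the curve ansatz plus the Multi-valued Contraction Mapping Principle of \cite{theoryofetremal}, the same tool the paper invokes for the converse inclusion in Theorem \ref{5th1} --- and the skeleton is sound. Your order bookkeeping is correct: with $u = th + t^{w}\tilde{h}$ the terms of order $t^{p}$ and $t^{p-1+w}$ in $F^{(p)}(\bar{x})[u]^{p}$ die by the two kernel hypotheses, the surviving residual is $O(t^{p-2+2w}+t^{p+1})$ on the $Y_{1}(h)$ side and $O(t^{(p-2)+2w+\epsilon})$ on the $Y_{2}(h)$ side by assumption, and after dividing by the anisotropic gains $t^{p-1}$ and $t^{p-2+w}$ (which is legitimate because $P_{2}F^{(p)}(\bar{x})[h]^{p-1}=0$, so the scaled derivative really does converge to the operator in \eqref{Norm-Inv}) one gets corrections of size $t^{2w-1}+t^{2}+t^{w+\epsilon}=o(t)$, with the self-mapping constraint $\rho(t)=o(t^{w})$ satisfied because $w\in(1,2)$ and $\epsilon>0$. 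Two soft spots, neither fatal: first, the contraction estimate itself is asserted rather than carried out (one must check that the hypothesis on $P_{2}F$, which is given only at $z=0$, suffices --- it does, since the iteration controls increments through the derivative, not through repeated use of that bound); second, your claim that the upper bound $w<3/2$ is ``consumed'' in the contraction step is unsubstantiated --- your own estimates only ever require $w<2$, and since the hypothesis $w\in(1,3/2)$ is strictly stronger this costs you nothing, but you should not present the range $(1,3/2)$ as something your argument pins down when it does not.
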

In Theorem \ref{thL}, we have one of possible modifications of the $p$-factor-operator in the form \eqref{Norm-Inv}.

Another possible construction of $p$-factor-operator can be found in \cite{IzTr94}.

As we pointed out at the beginning of the manuscript, degenerate problems  and ill-posed problems are equivalent. Therefore, 
ill-posed problems need to be solved by  methods that are adapted for finding degenerate solutions. Those methods include $p$-factor-methods and approaches based on the theory of $p$-regularity. We plan to describe methods for ill-posed problems in our future publication.

\bibliographystyle{plain}
\bibliography{references}

\end{document}